\newtheorem{thm}{Theorem}[section]
\newtheorem{la}[thm]{Lemma}
\newtheorem{prop}[thm]{Proposition}
\newtheorem{cor}[thm]{Corollary}
\theoremstyle{definition}
\newtheorem{defn}[thm]{Definition}
\newtheorem{exa}[thm]{Example}
\newtheorem{rem}[thm]{Remark}
\newtheorem{numba}[thm]{\!\!}
\newcommand{\Punkt}{\nopagebreak\hspace*{\fill}$\Box$}
\newcommand{\wb}{\overline}
\newcommand{\ve}{\varepsilon}
\newcommand{\impl}{\Rightarrow}
\newcommand{\mto}{\mapsto}
\newcommand{\N}{{\mathbb N}}
\newcommand{\R}{{\mathbb R}}
\newcommand{\bS}{{\mathbb S}}
\newcommand{\Sph}{{\mathbb S}}
\newcommand{\cL}{{\mathcal L}}
\newcommand{\cE}{{\mathcal E}}
\newcommand{\cA}{{\mathcal A}}
\newcommand{\cK}{{\mathcal K}}
\DeclareMathOperator{\GL}{GL}
\DeclareMathOperator{\id}{id}
\newcommand{\pl}{{\displaystyle \lim_{\longleftarrow}\, }}
\newcommand{\dl}{{\displaystyle \lim_{\longrightarrow}\, }}
\newcommand{\toto}{\ensuremath{\nobreak\rightrightarrows\nobreak}}
\newcommand{\coloneq}{\colonequals}
\DeclareMathOperator{\Diff}{Diff}
\DeclareMathOperator{\Iso}{Iso}
\DeclareMathOperator{\Supp}{supp}
\DeclareMathOperator{\pr}{pr}
\DeclareMathOperator{\ev}{ev}
\DeclareMathOperator{\graph}{graph}
\newcommand{\Frechet}{Fr\'{e}chet}
\newcommand{\LB}[1][\cdot \hspace{1pt} , \cdot]{\left[\hspace{1pt} #1 \hspace{1pt} \right]}
\newcommand{\Lf}{\mathbf{L}}
\date{}
\begin{document}
\title{Lie groupoids of mappings taking values in a Lie groupoid}
 \author{Habib Amiri\footnote{University of Zanjan, Iran
\href{mailto:h.amiri@znu.ac.ir}{h.amiri@znu.ac.ir}},\ Helge Gl\"{o}ckner\footnote{University of Paderborn, Germany, \href{mailto:glockner@math.upb.de}{glockner@math.upb.de}}\ \ ~~and Alexander
Schmeding\footnote{TU Berlin, Germany
\href{mailto:schmeding@tu-berlin.de}{schmeding@tu-berlin.de}
}%
}
{\let\newpage\relax\maketitle}

\begin{abstract}
Endowing differentiable functions from a compact manifold to a Lie group with the pointwise group operations one obtains the so-called current groups and, as a special case, loop groups. These are prime examples
of infinite-dimensional Lie groups modelled on locally convex spaces. 
In the present paper, we generalise this construction and show that differentiable mappings on a compact manifold (possibly with boundary) with values in a Lie groupoid form infinite-dimensional Lie groupoids which we call current groupoids.
We then study basic differential geometry and Lie theory for these Lie groupoids of mappings. In particular, we show that certain Lie groupoid properties, like being a proper \'{e}tale Lie groupoid, are inherited by the current groupoid. Furthermore, we identify the Lie algebroid of a current groupoid as a current algebroid (analogous to the current Lie algebra associated to a current Lie group).

To establish these results, we study superposition operators
\[
C^\ell(K,f)\colon C^\ell(K,M)\to C^\ell(K,N),\;\, \gamma\mto f\circ \gamma
\]
between manifolds of $C^\ell$-functions. Under natural
hypotheses, $C^\ell(K,f)$ turns out to be a submersion (an immersion, an embedding, proper, resp., a local diffeomorphism) if so is the underlying map $f\colon M\to N$. These results are new in their generality and of independent interest.
\end{abstract}

\medskip

\textbf{MSC2010:} 
22A22 (primary); % top groupoids including diff/Lie
22E65, % inf-dim Lie groups
22E67, % loop groups and related constructions
46T10, % nonlin FA: manifolds of mappings
47H30, % operator theory: special types of nonlin ops, eg superpositon, Nemytskii, ...
58D15, % global analysis: manifolds of mappings
58H05\\[2.3mm]% pseudogroups and differentiable groupoids
\textbf{Key words:}
Lie groupoid, Lie algebroid, topological groupoid, mapping groupoid, current groupoid, manifold of mappings, superposition operator, Nemytskii operator, pushforward, submersion, immersion, embedding, local diffeomorphism, \'{e}tale map, proper map, perfect map, orbifold groupoid, transitivity, local transitivity,
local triviality, Stacey-Roberts Lemma

\tableofcontents

\section*{Introduction and statement of results} \addcontentsline{toc}{section}{Introduction and statement of results}
It is a well-known fact that the set
$C^\ell(K,G)$ of $C^\ell$-maps (for $\ell \in \N_0 \cup \{\infty\}$) from a compact manifold~$K$ to a Lie group~$G$ is again a Lie group (compare, e.g., \cite{Mil,Mic,PaS,Nee,GCX,GaN}).
Such mapping groups, often called current groups,
are prominent examples of infinite-dimensional Lie groups (notably loop groups $C^\ell(\bS,G)$, \cite{PaS}).
We perform an analogous construction for Lie groupoids, and study basic differential geometry and Lie theory for these current groupoids. In particular, we identify the Lie algebroid of a current groupoid as the corresponding current algebroid. Here, in analogy to the current Lie group/current Lie algebra picture
\cite{Nee,NaW,PaS}, a
current algebroid is a bundle of algebroid-valued differentiable maps whose Lie algebroid structure is induced by the pointwise operations. Moreover, we show that certain properties of Lie groupoids, such as being an \'{e}tale Lie groupoid, lift to the (infinite-dimensional) current groupoid.  
The key observation driving our approach is that superposition operators between manifolds of mappings inherit many properties from the underlying mappings. These results are new and of independent interest as they constitute a versatile tool to deal with some of the basic building blocks in infinite-dimensional geometry.\smallskip%\\[4mm]

Let us now describe our results in a bit more detail. Our construction is based on the fact that a manifold structure can be constructed on $C^\ell(K,M)$ whenever the target manifold~$M$ has a local addition (see Appendix~\ref{map-mfd}; cf.\
\cite{Eel,Ham,KaM,Mic,Mil}). Here the compact source manifold $K$ may have a smooth boundary, corners, or more generally a ``rough boundary'' as defined in~\cite{GaN} (and recalled in~\ref{def-rough}). For a smooth map $f \colon M \rightarrow N$, it is known that the manifold structures turn the superposition operator
$C^\ell (K,f) \colon C^\ell (K,M) \rightarrow C^\ell (K,N), C^\ell (K,f)(\gamma) \coloneq f\circ \gamma$
into a smooth map.
Note that the manifolds, Lie groups and Lie groupoids we study can be infinite-dimensional (in particular,
this is the case for the manifolds of mappings).
To deal with manifolds modelled on locally convex spaces beyond the Banach setting we work in the framework of the so-called Bastiani (or Keller $C^k_c$-) calculus \cite{Bas}, recalled in Section~\ref{sec:prelim}.
Throughout the following, we shall always consider a Lie groupoid $\mathcal{G} = (G\toto M)$ modelled on locally convex spaces with source map~$\alpha$ and target map~$\beta$ such that~$G$ and~$M$ admit local additions. Our results subsume the following theorem.\\[4mm]
\textbf{ Theorem~A.}
\emph{Assume that~$M$ is a smooth Banach manifold, $K$ a compact smooth manifold} (\emph{possibly with rough boundary}),
\emph{and $\ell\in\N$.
Then the pointwise operations turn $C^\ell (K,\mathcal{G}) \coloneq  (C^\ell(K,G) \toto C^\ell(K,M))$ into a Lie groupoid with source map $C^\ell(K,\alpha)$ and target map $C^\ell(K,\beta)$.
The same conclusion holds if $\ell=0$ and all modelling spaces of~$M$ are finite dimensional,
or if $\ell=\infty$ and all modelling spaces of~$G$ and~$M$ are finite dimensional.}\\[4mm]
Lie groupoids of the form $C^\ell(K,\mathcal{G})$ shall be referred to as
\emph{Lie groupoids of Lie groupoid-valued
mappings}, or \emph{current groupoids}. Since every Lie group can be interpreted as a Lie groupoid (over the one point manifold), current groupoids generalise current Lie groups and loop groups.

We then study basic differential geometry for current groupoids. For example, we identify
Lie subgroupoids and Lie groupoid actions which are induced by subgroupoids and actions of the target groupoids. Further, we investigate whether current groupoids inherit typical properties of Lie groupoids. To this end, recall the following typical properties of Lie groupoids.\\[4mm]
\textbf{ Definition} Consider a Lie groupoid $G$ with source map $\alpha\colon G\to M$
and target map $\beta\colon G\to M$. The Lie groupoid~$G$ is called
\begin{description}
\item[$\;$\'{e}tale] if $\alpha$ is a local $C^\infty$-diffeomorphism;
\item[$\;$proper] if $(\alpha,\beta)\colon G\to M\times M$ is a proper map;
\item[$\;$locally transitive] if $(\alpha,\beta)\colon G\to M\times M$ is a submersion;
\item[$\;$transitive] if $(\alpha,\beta)\colon G\to M\times M$ is a surjective submersion.\footnote{Our usage of "transitive" is as in \cite{MaM} and \cite{BaGaJaP} (but differs from
the notion of transitivity in \cite{Mac}).
Our concept of local transitivity
is as in~\cite{BaGaJaP} (where only Banach-Lie groupoids are considered).}
\end{description}
Concering local transitivity, we observe:\\[4mm]
\textbf{ Theorem B.}
\emph{If $\mathcal{G}$ is locally transitive in the situation of Theorem} A,
\emph{then also $C^\ell(K,\mathcal{G})$ is locally transitive.}\\[4mm]
We mention that $C^\ell(K,\mathcal{G})$ need not be transitive if~$\mathcal{G}$ is transitive (Example~\ref{not-tra}).
Likewise, $C^\ell(K,\mathcal{G})$ need not be proper if~$\mathcal{G}$ is proper (Example~\ref{not-proper}).
The situation improves if~$\mathcal{G}$ is \'{e}tale,
and we can even get around some hypotheses of Theorem~A
in this case.\\[4mm]
\textbf{ Theorem C.}
\emph{Let $\mathcal{G}=(G\toto M)$ be an \'{e}tale Lie groupoid modelled on locally convex spaces over a smooth manifold~$M$ modelled on locally convex spaces, $K$ a compact smooth manifold} (\emph{possibly with rough boundary}),
\emph{and $\ell\in\N_0\cup\{\infty\}$.
If the topological space underlying~$G$ is regular, then $C^\ell(K,\mathcal{G})$ is an \'{e}tale Lie groupoid.
If, moreover, $\mathcal{G}$ is proper, then also $C^\ell(K,\mathcal{G})$ is proper.}\\[4mm]
Analogs to Theorem~C are also available for topological groupoids
(see Corollary~\ref{top-eta}). As a consequence of Theorem C, the current groupoid of a proper \'{e}tale Lie groupoid will again be a proper \'{e}tale Lie groupoid. It is well known that proper \'{e}tale Lie groupoids are linked to orbifolds (cf.\ \cite{MaP,MaM,Sch}), whence they are also often called orbifold groupoids. In light of Theorem C, we can thus view the construction of Lie groupoids of orbifold Lie groupoid-valued mappings as a construction of infinite-dimensional orbifolds of mappings. However,
current groupoids are too simple in general to model spaces of orbifold morphisms \cite{RaV,RaV2,Chen}. This is discussed in detail in
Appendix~\ref{app:orbi}.\\[2.3mm]
Note that for $\ell \in \N_0$ and $\mathcal{G}$ a Banach-Lie groupoid, also the current groupoid will be a Banach-Lie groupoid (for $\ell = \infty$ and $\mathcal{G}$ a Banach-Lie groupoid,
the current groupoid is modelled on \Frechet\, spaces).
Basic (Lie) theory for Banach-Lie groupoids has recently been established in~\cite{BaGaJaP}.\\[2.3mm] 
On the infinitesimal level, one associates to a Lie groupoid a so-called Lie algebroid \cite{Mac,BaGaJaP}. 
The infinitesimal objects of current groupoids are as expected:\\[4mm]
\textbf{Theorem D} \emph{In the situation of Theorem} A, \emph{denote by $\mathcal{A} (\mathcal{G})$ the Lie algebroid associated to $\mathcal{G}$. Then there exists a canonical isomorphism of Lie algebroids such that}
$$\mathcal{A} (C^\ell (K,\mathcal{G})) \cong C^\ell (K,\mathcal{A} (\mathcal{G})),$$
\emph{where the Lie algebroid on the right hand side is given by the bundle $C^\ell (K,\mathcal{A} (G)) \rightarrow C^\ell (K,M)$ with the pointwise algebroid structure.}\\[2.3mm]
Again this generalises the case of current groups for which the construction yields (up to a shift in sign, see Remark \ref{rem:shiftsign}) the well-known construction of a current algebra \cite{NaW,KaW,Kac}.\\[4mm]
The main point of Theorem~A
is to see that $C^\ell(K,\alpha)$ and $C^\ell(K,\beta)$ are
submersions, and Theorem~B requires showing that $C^\ell(K,(\alpha,\beta))$ is a submersion.
Similarly, Theorem~C requires showing that $C^\ell(K,\alpha)$
is a local diffeomorphism (resp., that $C^\ell(K,(\alpha,\beta))$ is proper).
The following result provides these properties.\\[4mm]
\textbf{ Theorem E.}
\emph{Let $M$ and $N$ be smooth
manifolds modelled on locally convex spaces
such that~$M$ and~$N$ admit a local addition.
Let $k,\ell\in \N_0\cup\{\infty\}$,
$f\colon M\to N$ be a $C^{k+\ell}$-map
and $K$ be a compact manifold $($possibly with rough boundary$)$.
Then the~$C^k$-map}
\[
C^\ell(K,f)\colon C^\ell(K,M)\to C^\ell(K,N),\quad \gamma\mto f\circ\gamma
\]
\emph{has the following properties}:
\begin{itemize}
\item[(a)]
\emph{If $f$ is a submersion, $N$ is modelled on Banach spaces,
$k\geq 2$ and $\ell<\infty$,
then $C^\ell(K,f)$
is a submersion, assuming $\ell\geq 1$ if some modelling space of~$N$
is infinite-dimensional.}
\item[(b)]
\emph{If $f$ is an immersion, $M$ is modelled on Banach spaces,
$k\geq 2$ and $\ell<\infty$, then $C^\ell(K,f)$ is an immersion, assuming $\ell\geq 1$
if some modelling space of~$M$ is infinite-dimensional.}
\item[(c)]
\emph{If $f$ is a local $C^{k+\ell}$-diffeomorphism and $M$ is a regular topological space,
then $C^\ell(K,f)$ is a local $C^k$-diffeomorphism.}
\item[(d)]
\emph{If $f$ is a proper map, $M$ is a regular topological space and $N=N_1\times N_2$
with smooth manifolds~$N_1$ and~$N_2$ such that~$N_1$ admits a local addition
and
$\pr_1\circ\, f\colon M\to N_1$ is a local $C^{k+\ell}$-diffeomorphism, then $C^\ell(K,f)$ is proper.}
\end{itemize}
We remark that Theorem E (a) and (b) generalise a similar result by Palais
\cite[Theorem 14.10]{Pal2} for certain morphisms of (smooth) fiber bundles.

If $f$ in Theorem~E (b) is, moreover, a homeomorphism onto its image,
then so is $C^\ell(K,f)$ (see Lemma~\ref{embpfwd}),
whence $C^\ell(K,f)$ is an embedding of $C^k$-manifolds.
What is more, we have the following variant
(which also varies a result in~\cite{Mic}),
as a special case of Proposition~\ref{prop:emb}:\\[4mm]
\textbf{ Theorem F.} \emph{Let $e \colon M \rightarrow N$ be a smooth embedding between finite-dimensional manifolds,
$K$ be a compact smooth manifold $($possibly with rough boundary$)$, and
$\ell \in \N_0 \cup \{\infty\}$. Then $C^\ell (K,e) \colon C^\ell (K,M) \rightarrow C^\ell (K,N)$ is an embedding.}\\[2.3mm]
If $\ell=0$, then~$K$ can actually be replaced with an arbitrary compact topological space in Theorems~A--F (in view of \cite[Remark~4.9]{AaS}),
assuming moreover that~$K$ is locally connected for the conclusions concerning properness (cf.\ Proposition~\ref{ess-prop} and Corollary~\ref{top-eta}).

\section{Preliminaries}\label{sec:prelim}
We shall write $\N=\{1,2,\ldots\}$ and $\N_0:=\N\cup\{0\}$.
Hausdorff locally convex real topological vector spaces
will be referred to as locally convex spaces.
If $E$ and~$F$ are locally convex spaces, we let
$\cL(E,F)$ be the space of all continuous linear mappings from~$E$ to~$F$.
We write $\cL(E,F)_c$ and $\cL(E,F)_b$, respectively, if the topology
of uniform convergence on compact sets (resp., bounded sets) is used on~$\cL(E,F)$.
We write $\GL(E)$ for the group of automorphisms of~$E$
as a locally convex space;
if $E$ is a Banach space, then $\GL(E)$ is an open subset
of $\cL(E)_b:=\cL(E,E)_b$.
A subset $U$ of a locally convex space~$E$ is called \emph{locally convex}
if for each $x\in U$, there exists a convex $x$-neighborhood in~$U$.
Every open set $U\subseteq E$ is locally convex. We shall work in a setting of infinite-dimensional
calculus known as Bastiani calculus or Keller's $C^k_c$-theory, going back to~\cite{Bas},
and generalizations thereof (see \cite{GaN} and \cite{AaS},
also \cite{RES,Ham,Mic}, and~\cite{Mil}).

\begin{numba}
If $E$ and $F$ are locally convex spaces and $f\colon U\to F$
is a mapping on a locally convex subset $U\subseteq E$ with dense interior~$U^0$, we write
\[
(D_yf)(x):=\frac{d}{dt}\Big|_{t=0}f(x+ty)
\]
for the directional derivative of $f$ at $x\in U^0$ in the direction $y\in E$, if it exists.
A mapping $f\colon U\to F$ is called $C^k$ with $k\in \N_0\cup\{\infty\}$
if $f$ is continuous and there exist continuous mappings
$d^{\,(j)}f\colon U\times E^j\to F$ for all $j\in \N$ with $j\leq k$ such that
\[
d^{\,(j)}f(x,y_1,\ldots,y_j)=(D_{y_j}\cdots D_{y_1}f)(x)
\;\,\mbox{for all $x\in U^0$ and $y_1,\ldots, y_j\in E$.}
\]
\end{numba}
\begin{numba}\label{def-rough}
As compositions of $C^k$-maps are $C^k$, one can define $C^k$-manifolds
modelled on a set~$\cE$ of locally convex spaces as expected:
Such a manifold is a Hausdorff topological space~$M$,
together with a maximal set $\cA$ of homeomorphisms $\phi\colon U_\phi\to V_\phi$
(``charts'')
from an open subset $U_\phi\subseteq M$ onto an open subset $V_\phi\subseteq E_\phi$
for some $E_\phi\in\cE$ such that $\bigcup_{\phi\in\cA}U_\phi=M$
and $\phi\circ\psi^{-1}$ is $C^k$ for all $\phi,\psi\in\cA$.
If the sets $V_\phi$ in the definition of a $C^k$-manifold
are only required to be locally convex subsets with dense interior of some $E_\phi\in \cE$
(but not necessarily open), we obtain the more general concept
of a \emph{$C^k$-manifold with rough boundary} modelled on~$\cE$.

If all locally convex spaces in $\cE$ are Banach, \Frechet\, or finite-dimensional spaces, we say that $M$ is a Banach, or \Frechet\, or finite-dimensional manifold (possibly with rough boundary) respectively. Note that a priori all manifolds in this paper are allowed to be modelled on locally convex spaces and we suppress this in the notation (only emphasising the special cases
of Banach and \Frechet\, manifolds).
If $\cE=\{E\}$ consists of a single locally convex space, then $M$ is a
\emph{pure} $C^k$-manifold. Only this case is considered in~\cite{GaN},
but it captures the essentials as each connected component
of a $C^k$-manifold is open, and can be considered as a pure $C^k$-manifold.
However, the manifolds $C^\ell(K,M)$ we are about to consider
need not be pure (even if~$M$ is pure).
\end{numba}

\begin{rem}
Every ordinary finite-dimensional manifold with smooth boundary
also is a smooth manifold with rough boundary,
and so are finite-dimensional smooth manifolds with corners
in the sense of~\cite{Mic}.
\end{rem}
\begin{numba}
As usual, a map $f\colon M\to N$ between $C^k$-manifolds (possibly with rough boundary) is called~$C^k$ if it is continuous
and $\phi\circ f\circ\psi^{-1}$ is~$C^k$ for all charts $\psi$ and~$\phi$ of~$M$ and~$N$,
respectively. 

For a $C^k$-map $f\colon M\rightarrow N$ between manifolds \emph{without boundary}, we say 
(see \cite{Ham,SUB}) that $f$ is
\begin{enumerate}
\item a \emph{submersion} (or \emph{$C^k$-submersion}, for emphasis) if for each $x\in M$ we can choose a chart $\psi$
of~$M$ around~$x$ and a chart $\phi$ of~$N$
around $f(x)$ such that
$\phi\circ f\circ~\psi^{-1}$
is the restriction of a continuous linear map with continuous linear right inverse;
\item an \emph{immersion} (or \emph{$C^k$-immersion}) if for each $x\in M$ there are charts such that
$\phi\circ f\circ \psi^{-1}$ is the restriction of a continuous linear map admitting a continuous linear left inverse;
\item an \emph{embedding} (or \emph{$C^k$-embedding}) if $f$ is a $C^k$-immersion and a topological embedding;
\item a \emph{local $C^k$-diffeomorphism} if each $x\in M$ has an open neighborhood
$U\subseteq M$ such that $f(U)$ is open in~$N$ and $f|_U\colon U\to f(U)$ is a $C^k$-diffeomorphism.
\end{enumerate}
If the tangent map $T_xf\colon T_xM\to T_{f(x)}M$ has a continuous linear
right inverse\footnote{We then also say that $T_xf$ is a split linear surjection.}
for each $x\in M$, then~$f$ is called a \emph{na\"{\i}ve submersion}.
If each tangent map $T_xf$ has a continuous linear lect inverse,
then $f$ is called a \emph{na\"{\i}ve immersion.}
\end{numba}
It is essential for us to consider mappings on products with
different degrees of differentiability in the two factors, as in~\cite{AaS}
(or also~\cite{GaN}).
\begin{numba}
Let $E_1$, $E_2$, and $F$ be locally convex spaces and $f\colon U\times V\to F$
be a mapping on a product of locally convex subsets $U\subseteq E_1$ and $V\subseteq E_2$
with dense interior. Given $k,\ell\in\N_0\cup\{\infty\}$, we say that $f$ is $C^{k,\ell}$
if $f$ is continuous and there exist continuous mappings
$d^{\,(i,j)}f\colon U\times V\times E_1^i\times E_2^j\to F$
for all $i,j\in\N_0$ such that $i\leq k$ and $j\leq \ell$
such that
\[
d^{\,(i,j)}f(x,y,v_1,\ldots,v_i,w_1,\ldots,w_j)=(D_{(v_i,0)}\cdots D_{(v_1,0)}
D_{(0,w_j)}\cdots D_{(0,w_1)}f)(x,y)
\]
for all $x\in U^0$, $y\in V^0$ and $v_1,\ldots, v_i\in E_1$,
$w_1,\ldots, w_j\in E_2$.
\end{numba}
One can also define $C^{k,\ell}$-maps $M_1\times M_2\to N$
if~$M_1$ is a $C^k$-manifold, $M_2$ a $C^\ell$-manifold and~$N$
a $C^{k+\ell}$-manifold (all possibly with rough boundary),
checking the property in local charts.
\begin{numba}[Submanifolds]
Let $M$ be a $C^k$-manifold (possibly with rough boundary).
A subset $N\subseteq M$ is called a \emph{submanifold}
if, for each $x\in N$, there exists a chart $\phi\colon U_\phi\to V_\phi\subseteq E_\phi$
of~$M$ with $x\in U_\phi$ and a closed vector subspace $F\subseteq E_\phi$
such that $\phi(U_\phi\cap N)=V_\phi\cap F$ and $V_\phi\cap F$ has non-empty interior
in~$F$ (note that the final condition is automatic if~$M$ is a manifold without boundary).
Then~$N$ is a $C^k$-manifold
in the induced topology, using the charts $\phi|_{U_\phi\cap N}\colon U_\phi\cap N\to V_\phi\cap F$.
If $F$ can be chosen as a vector subspace of~$E_\phi$
which is complemented in~$E_\phi$ as a topological vector space,
then $N$ is called a \emph{split submanifold} of~$M$.
\end{numba}
The following two observations are well known (see, e.g., \cite{GaN}):
\begin{numba}\label{into-sub}
If $N$ is a submanifold of a $C^k$-manifold~$M$ (possibly with rough boundary)
and $f\colon L\to M$ a map on a $C^k$-manifold~$L$
such that $f(L)\subseteq N$, then $f$ is~$C^k$ if and only if its corestriction
$f|^N\colon L\to N$ is~$C^k$ for the $C^k$-manifold structure induced on~$N$.
\end{numba}
\begin{numba}\label{Ckellinto}
If $M$ is a $C^{k+\ell}$-manifold, $L_1$ is a $C^k$-manifold, $L_2$ a $C^\ell$-manifold
(all possibly with rough boundary)
and $f\colon L_1\times L_2\to M$ is a map with image in a submanifold~$N\subseteq M$,
then $f$ is $C^{k,\ell}$ if and only if $f|^N$ is~$C^{k,\ell}$.
\end{numba}
\begin{numba}
Let $k\in \N\cup\{\infty\}$. We mention that a $C^k$-map $f\colon M\to N$ between $C^k$-manifolds
without boundary is a $C^k$-embedding if and only if $f(M)$ is a split submanifold of~$N$
and $f|^{f(M)}\colon M\to f(M)$ is a $C^k$-diffeomorphism
(see \cite[Lemma 1.13]{SUB}). If $M$ and $N$ are $C^k$-manifolds
which may have a rough boundary, we take the latter property as the definition
of a $C^k$-embedding $f\colon M\to N$.
\end{numba}
\begin{numba}
If~$M$ is a $C^1$-manifold (possibly with rough boundary)
and $f\colon M\to U$ a $C^1$-map to an open subset $U$ of a locally convex space~$E$,
we identify the tangent bundle $TU$ with $U\times E$, as usual,
and let $df$ be the second component of the tangent map $Tf\colon TM\to TU=U\times E$.
\end{numba}
For Lie groups modelled on locally convex spaces, we refer to \cite{Mil,Nee,GaN}.
\begin{numba}
Consider a groupoid~$\mathcal{G} = (G\toto M)$, with source map $\alpha\colon G\to M$ and target map $\beta\colon G\to M$. If $G$ and~$M$ are smooth manifolds, $\alpha$ and~$\beta$ are $C^\infty$-submersions
and the multiplication map $G^{(2)}\to G$, the inversion map $G\to G$
and the identity-assigning map $M\to G$, $x\mto \mathbf{1}_x$ are smooth,
then~$\mathcal{G}$ is called a \emph{Lie groupoid}.
If $\mathcal{G}$ is a Lie groupoid and both of the manifolds $G$ and $M$ are modelled
on Banach spaces, then $\mathcal{G}$ is called a \emph{Banach-Lie groupoid}.
\end{numba}
\begin{numba}\label{the-topo}
Let $M$ and~$N$ be $C^k$-manifolds (possibly with rough boundary),
where $k\in \N_0\cup\{\infty\}$.
Given a $C^k$-map $\gamma\colon M\to N$,
we set $T^0M:=M$, $T^0N:=N$, $T^0\gamma:=\gamma$.
Recursively, we define iterated tangent maps $T^j\gamma:=T(T^{j-1}\gamma)$
from $T^jM:=T(T^{j-1}M)$ to $T^jN:=T(T^{j-1}N)$.
We endow the set $C^k(M,N)$ of all $N$-valued $C^k$-maps on~$M$
with the initial topology ${\mathcal O}$ with respect to the maps
\[
T^j\colon C^k(M,N)\to C(T^jM,T^jN),\;\, \gamma\mto T^j\gamma,
\]
for $j\in\N_0$ with $j\leq k$,
where $C(T^jM,T^jN)$ is endowed with the compact-open topology.
The topology ${\mathcal O}$ is called the \emph{compact-open $C^k$-topology}
on $C^k(M,N)$.
\end{numba}
We shall use the following fact.
\begin{la}\label{top-cover}
Let $M$ and $N$ be $C^k$-manifolds modelled on locally convex spaces
$($possibly with rough boundary$)$, where $k\in\N_0\cup\{\infty\}$.
If $(U_i)_{i\in I}$ is an open cover of~$M$, then the topology on $C^k(M,N)$ is initial
with respect to the restriction maps $\rho_i\colon C^k(M,N)\to C^k(U_i,N)$ for $i\in I$.
\end{la}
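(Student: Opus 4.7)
The plan is to unwind the definition of the compact-open $C^k$-topology and compare two initial topologies. Recall that by~\ref{the-topo}, the topology on $C^k(M,N)$ is initial with respect to the maps $T^j\colon C^k(M,N)\to C(T^jM,T^jN)_{c.o.}$ for $j\in\N_0$ with $j\le k$, and similarly for each $C^k(U_i,N)$. The key compatibility is that for every $\gamma\in C^k(M,N)$ one has $T^j(\gamma|_{U_i})=(T^j\gamma)|_{T^jU_i}$, because $TU_i=(TM)|_{U_i}$ and taking the tangent map commutes with restriction to an open set; iterating gives $T^jU_i=(T^jM)|_{U_i}$ and the corresponding identity for~$T^j\gamma$.

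First I would verify that each restriction map $\rho_i$ is continuous. This follows from the preceding identity and the fact that for any topological spaces $X,Y$ and open $U\subseteq X$, the restriction $C(X,Y)_{c.o.}\to C(U,Y)_{c.o.}$ is continuous. Thus the initial topology $\cT$ on $C^k(M,N)$ with respect to the family $(\rho_i)_{i\in I}$ is coarser than the compact-open $C^k$-topology.

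For the converse inclusion, I would show that each subbasic open set of the compact-open $C^k$-topology is $\cT$-open. A typical subbasic open set has the form
\[
W=\{\gamma\in C^k(M,N):(T^j\gamma)(K)\subseteq V\}
\]
for some $j\le k$, some compact $K\subseteq T^jM$ and some open $V\subseteq T^jN$. Since $(T^jU_i)_{i\in I}$ is an open cover of $T^jM$, finitely many of them, say $T^jU_{i_1},\ldots,T^jU_{i_n}$, cover the compact set~$K$. Now comes the main technical step: since $K$ is compact Hausdorff, hence normal, the shrinking lemma lets me write $K=F_1\cup\cdots\cup F_n$ with each $F_r$ closed in~$K$ (thus compact) and $F_r\subseteq T^jU_{i_r}$. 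Using the identity $T^j(\gamma|_{U_{i_r}})=(T^j\gamma)|_{T^jU_{i_r}}$, we obtain
\[
W=\bigcap_{r=1}^n \rho_{i_r}^{-1}\bigl(\{\sigma\in C^k(U_{i_r},N):(T^j\sigma)(F_r)\subseteq V\}\bigr),
\]
and the set on the right of each $\rho_{i_r}^{-1}$ is open in the compact-open $C^k$-topology on $C^k(U_{i_r},N)$. Hence $W\in\cT$, completing the proof.

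The only slightly delicate point is the shrinking step, which uses normality of the compact Hausdorff space~$K$ (not of the possibly non-locally-compact ambient space~$T^jM$), so the argument works without any local compactness assumption on the modelling spaces.
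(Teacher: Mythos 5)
Your proof is correct and rests on the same two ingredients as the paper's: the commutation identity $T^j\circ\rho_i=\rho_{i,j}\circ T^j$ (i.e.\ $T^j(\gamma|_{U_i})=(T^j\gamma)|_{T^jU_i}$) and the fact that the compact-open topology on $C(T^jM,T^jN)$ is initial with respect to restriction to the open cover $(T^jU_i)_{i\in I}$. The only difference is presentational: the paper cites this last fact from the literature and concludes by transitivity of initial topologies, whereas you prove it directly by shrinking a finite subcover of the compact set $K$ inside the normal space $K$ — a valid, self-contained substitute for the cited lemma.
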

\begin{proof}
For each $j\in \N_0$ such that $j\leq k$,
the sets $T^jU_i$ form an open cover of $T^jM$ for $i\in I$,
whence the compact-open topology on $C(T^jM,T^jN)$ is initial with respect to
the restriction maps $\rho_{i,j}\colon C(T^jM,T^jN)\to C(T^jU_i,T^jN)$ for $i\in I$ (see
\cite[Lemma~A.5.11]{GaN}).
By transitivity of initial topologies \cite[Lemma~A.2.7]{GaN},
the topology on $C^k(M,N)$ is initial with respect
to the mappings $\rho_{i,j}\circ T^j$ for $i\in I$ and $j\in\N_0$ with $j\leq k$.
Again by transitivity of initial topologies, the initial topology on $C^k(M,N)$ with respect
to the maps $\rho_{i,j}\circ T^j=T^j\circ\rho_i$ coincides with the initial topology with respect to the mappings~$\rho_i$.
\end{proof}
If $M$, $N$, and $S$ are Hausdorff topological spaces and $f\colon S\to N$
is a continuous map, then also $C(M,f)\colon C(M,S)\to C(M,N)$
is continuous for the compact-open topologies; if $f$ is a topological embedding,
then so is $C(M,f)$
(see, e.g., \cite[Appendix~A]{GaN}). Likewise, the following holds:
\begin{la}\label{embpfwd}
Let $M$, $N$, and $S$ be $C^k$-manifolds modelled on locally convex spaces
$($possibly with rough boundary$)$, where $k\in\N\cup\{\infty\}$.
If $f\colon S\to N$ is a $C^k$-map,
then $C^k(M,f)\colon C^k(M,S)\to C^k(M,N)$ is continuous.
If $f$ is a $C^k$-embedding, then $C^k(M,f)$
is a topological embedding.
\end{la}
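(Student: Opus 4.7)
I would treat the two assertions separately, exploiting the fact that the compact-open $C^k$-topology on a mapping space is defined as an initial topology with respect to the iterated-tangent maps $T^j$ into the compact-open spaces $C(T^jM,T^jN)$, together with functoriality: $T^j(f\circ\gamma)=T^jf\circ T^j\gamma$.

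For continuity, for every $j\in\N_0$ with $j\leq k$ functoriality gives the commuting identity
\[
T^j\circ C^k(M,f)\;=\;C(T^jM,T^jf)\circ T^j\colon C^k(M,S)\to C(T^jM,T^jN).
\]
Here $T^j\colon C^k(M,S)\to C(T^jM,T^jS)$ is continuous by the very definition of the compact-open $C^k$-topology on $C^k(M,S)$ in~\ref{the-topo}, and $C(T^jM,T^jf)$ is continuous in the compact-open topology because $T^jf$ is continuous (the topological fact recalled in the paragraph preceding the lemma). Since each composite $T^j\circ C^k(M,f)$ is continuous, the universal property of the initial topology on $C^k(M,N)$ yields continuity of $C^k(M,f)$.

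For the embedding statement, suppose $f$ is a $C^k$-embedding. By the paper's definition of embedding, $P\coloneq f(S)$ is a (split) submanifold of $N$ and $f|^P\colon S\to P$ is a $C^k$-diffeomorphism. Writing $\iota\colon P\hookrightarrow N$ for the inclusion, one has the factorisation $C^k(M,f)=C^k(M,\iota)\circ C^k(M,f|^P)$. Applying the continuity part to $f|^P$ and to its $C^k$-inverse shows that $C^k(M,f|^P)$ is a homeomorphism, so the task reduces to proving that $C^k(M,\iota)$ is a topological embedding. Injectivity is immediate, and I would establish the subspace-topology property by invoking transitivity of initial topologies (cf.\ \cite[Lemma~A.2.7]{GaN}): the topology on $C^k(M,P)$ is initial with respect to the family $(T^j)_{j\leq k}$, while the subspace topology inherited via $C^k(M,\iota)$ from $C^k(M,N)$ is initial with respect to the family $T^j\circ C^k(M,\iota)=C(T^jM,T^j\iota)\circ T^j$. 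Hence it suffices to check that each $C(T^jM,T^j\iota)$ is a topological embedding, which in turn reduces, via the topological analogue cited before the lemma, to $T^j\iota$ being a topological embedding.

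The one genuine step is therefore showing $T^j\iota\colon T^jP\to T^jN$ is a topological embedding for every $j\leq k$. I would argue by induction: in a submanifold chart the inclusion has local form $(V_\phi\cap F)\hookrightarrow V_\phi$ with $F\subseteq E_\phi$ a closed vector subspace, so passing to the tangent level it becomes $(V_\phi\cap F)\times F\hookrightarrow V_\phi\times E_\phi$, which is a product of topological embeddings and thus a topological embedding; iterating yields the claim for all~$j$. Transitivity of initial topologies then identifies the subspace topology on $C^k(M,P)$ with its compact-open $C^k$-topology, completing the proof. The bookkeeping involved in the tangent-bundle step in the rough-boundary, locally convex setting is the main, though essentially routine, hurdle; everything else is a formal consequence of the initial-topology description.
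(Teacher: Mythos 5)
Your proposal is correct and follows essentially the same route as the paper's proof: the continuity part via the identity $T^j\circ C^k(M,f)=C(T^jM,T^jf)\circ T^j$ and the initial-topology description, and the embedding part by reducing to the inclusion of the submanifold $f(S)$ and checking that $T^j(f(S))$ carries the topology induced by $T^jN$. You merely spell out in charts the step the paper states without detail, so there is nothing to add.
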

\begin{proof}
The first assertion follows from the continuity of the maps $T^j\circ C^k(M,f)=C(T^jM,T^jf)\circ T^j$.
If $f$ is a $C^k$-embedding,
then $f(S)$ is a $C^k$-submanifold of~$N$
and $f|^{f(S)}\colon S\to f(S)$ is a $C^k$-diffeomorphism, by \cite[Lemma 1.13]{SUB}.
Hence $$C^k(M,f|^{f(S)})\colon C^k(M,S)\to C^k(M,f(S))$$
is a homeomorphism. After replacing $S$ with $f(S)$, we may assume
that $S$ is a submanifold of~$N$ and $f\colon S\to N$ the inclusion map.
Since $T^jS$ is a submanifold of $T^jN$ for each $j\in\N_0$ with $j\leq k$
and the topology on the iterated tangent bundle coincides with the topology induced by $T^jN$,
we deduce that the topology on $C^k(M,S)$ is induced by $C^k(M,N)$.
\end{proof}
\begin{numba}
If $\pi\colon E\to M$ is a $C^k$-vector
bundle,\footnote{Thus $E$ and $M$ are $C^k$-manifolds (possibly with rough boundary),
$f$ is a surjective $C^k$-map and a vector space structure is given
on $E_x\coloneq f^{-1}(\{x\}$ for each $x\in M$ such that~$E$ is locally trivial
in the sense that each $x\in M$ has an open neighborhood $U\subseteq M$
for which there exists a $C^k$-diffeomorphism $\theta=(\theta_1,\theta_2)
\colon f^{-1}(U)\to U\times F$
for some locally convex space~$F$ such that $\theta_1=\pi|_{f^{-1}(U)}$
and $\theta_2|_{E_y}$ is linear for all $y\in U$.}
we write $\Gamma_{C^k}(E)$ for its space of $C^k$-sections
$\sigma\colon M\to E$ (thus $\pi\circ\sigma=\id_M$).
The topology induced by $C^k(M,E)$ on $\Gamma_{C^k}(E)$
makes the latter a locally convex space (see, e.g., \cite{GaN}).
If $U\subseteq M$ is an open subset, we write $E|_U:=\pi^{-1}(U)$. Finally, if all fibers $E_x$ of $E$ are Banach
spaces, \Frechet\, spaces or finite-dimensional, we say that $E$ is a Banach or \Frechet\, or finite rank bundle, respectively.
\end{numba}
\subsection*{Canonical manifolds of mappings}
  \addcontentsline{toc}{subsection}{Canonical manifolds of mappings}

We now define canonical manifold structures for manifolds of mappings. This allows us to identify the properties of manifolds of mappings necessary for our approach without having to deal with the details of the actual constructions (these are referenced in Appendix \ref{map-mfd}).

\begin{numba}[General Assumptions]
In the following we will (unless noted otherwise) use the following conventions and assumptions:
$K$ will be a compact smooth manifold (possibly with rough boundary),
$M, N$ will be smooth manifolds,
and $\ell, k\in\N_0\cup\{\infty\}$.
\end{numba}  
  
\begin{defn}\label{defcanon}
We say that a smooth manifold structure on the set $C^\ell(K,M)$ is \emph{canonical} if
its underlying topology is the compact-open $C^k$-topology and
the following holds: For each $k\in\N_0\cup\{\infty\}$,
each $C^k$-manifold $N$ (possibly with rough boundary) modelled on locally convex
spaces and each map $f\colon N\to C^\ell(K,M)$, the map $f$ is~$C^k$
if and only if
\[
f^\wedge\colon N\times K\to M,\;\, (x,y)\mto f(x)(y)
\]
is a $C^{k,\ell}$-map.
\end{defn}
\begin{rem}
A canonical manifold structure enforces a suitable version of the exponential law (cf.\ \cite{AaS,KaM}) which enables differentiability properties of mappings to be verified by computing them on the underlying manifolds. 
Thus we can avoid the (rather involved) manifold structure on manifolds
of mappings in many situations (similar ideas have been used in~\cite{NaW}). 
We hasten to remark that the usual constructions of manifolds of mappings yield canonical manifold structures (cf.\ the end of the present section and Appendix \ref{map-mfd}).
\end{rem}

\begin{la}\label{base-cano}
If $C^\ell (K,M)$ is endowed with a canonical manifold structure, then
\begin{itemize}
\item[\textup{(a)}]
the evaluation map $\ve\colon C^\ell(K,M)\times K\to M$,
$\ve(\gamma,x):=\gamma(x)$ is a $C^{\infty,\ell}$-map.
\item[\textup{(b)}]
Canonical manifold structures are unique in the following sense:
If we write $C^\ell(K,M)'$
for $C^\ell(K,M)$, endowed with another canonical manifold structure,
then $\id\colon C^\ell(K,M)\to C^\ell(K,M)'$, $\gamma\mto\gamma$
is a $C^\infty$-diffeomorphism.
\item[\textup{(c)}]
Let $N\subseteq M$ be a submanifold such that the set $C^\ell(K,N)$
is a submanifold of $C^\ell(K,M)$.
Then the submanifold structure on $C^\ell(K,N)$ is canonical.
\item[\textup{(d)}]
If $M_1$, and $M_2$ are smooth manifolds such that $C^\ell(K,M_1)$ and $C^\ell(K,M_2)$ have canonical
manifold structures, then the manifold structure
on the product manifold $C^\ell(K,M_1)\times C^\ell(K,M_2)$
$\cong  C^\ell(K,M_1\times M_2)$
is canonical.
\end{itemize}
\end{la}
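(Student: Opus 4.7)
The plan is to let every part be driven by applying the characterization of a canonical manifold structure to the identity map or to the inclusion into an ambient mapping manifold. For (a), I would take $N = C^\ell(K,M)$ (with the given canonical manifold structure) and apply the defining property to $f = \id_{C^\ell(K,M)}$, which is $C^\infty$. The definition of canonical manifold structure then forces $f^\wedge = \varepsilon$ to be $C^{\infty,\ell}$, which is exactly the claim.

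For (b), I would use (a) on both sides. Apply the canonical property of the second structure to the map $\id\colon C^\ell(K,M) \to C^\ell(K,M)'$: its associated $f^\wedge$ is nothing but the evaluation map $\varepsilon$, which by (a) applied to the first structure is $C^{\infty,\ell}$. Hence $\id\colon C^\ell(K,M) \to C^\ell(K,M)'$ is $C^\infty$. The same argument with the roles of the two structures swapped yields that the inverse is also $C^\infty$, so $\id$ is a $C^\infty$-diffeomorphism.

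For (c), I would check the topology and the universal property separately. Since the inclusion $\iota\colon N\hookrightarrow M$ is a $C^\ell$-embedding in the sense of the paper, Lemma~\ref{embpfwd} shows that $C^\ell(K,\iota)\colon C^\ell(K,N)\to C^\ell(K,M)$ is a topological embedding, whence the compact-open $C^\ell$-topology on $C^\ell(K,N)$ coincides with the topology induced from $C^\ell(K,M)$, which by assumption is the topology of the submanifold structure. For the universal property, let $f\colon L\to C^\ell(K,N)$; by~\ref{into-sub}, $f$ is $C^k$ for the submanifold structure precisely if $\iota_*\circ f\colon L\to C^\ell(K,M)$ is $C^k$, which by the canonical structure of $C^\ell(K,M)$ is equivalent to $(\iota_*\circ f)^\wedge = \iota\circ f^\wedge$ being $C^{k,\ell}$; in turn, \ref{Ckellinto} translates this into $f^\wedge\colon L\times K\to N$ being $C^{k,\ell}$.

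For (d), the underlying topology is again handled by standard compact-open considerations (the natural bijection $C^\ell(K,M_1\times M_2)\cong C^\ell(K,M_1)\times C^\ell(K,M_2)$ is a homeomorphism for the compact-open $C^\ell$-topologies). For the universal property, given $f=(f_1,f_2)\colon L\to C^\ell(K,M_1)\times C^\ell(K,M_2)$, the map $f$ is $C^k$ iff each component $f_j$ is $C^k$, which by canonicity of each factor is equivalent to $f_j^\wedge$ being $C^{k,\ell}$ for $j=1,2$; since a map into a product is $C^{k,\ell}$ iff both components are, this matches $f^\wedge=(f_1^\wedge,f_2^\wedge)$ being $C^{k,\ell}$. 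No step presents a real obstacle; the only delicate point, and the one to execute carefully, is the topology identification in~(c), where the embedding property from Lemma~\ref{embpfwd} is essential in order to know that the compact-open $C^\ell$-topology agrees with the subspace topology.
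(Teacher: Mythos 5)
Your proposal is correct and follows essentially the same route as the paper: (a) and (b) via the exponential-law characterization applied to identity maps, (c) via \ref{into-sub} and \ref{Ckellinto} applied to the inclusions, and (d) via the componentwise characterization of $C^k$ and $C^{k,\ell}$ maps into a product. The only difference is that you explicitly verify the compact-open $C^\ell$-topology condition in (c) and (d) (via Lemma~\ref{embpfwd}), a point the paper's proof leaves implicit; this is a reasonable extra precaution rather than a divergence in method.
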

\begin{proof}
(a) Since $\id\colon C^\ell(K,M)\to C^\ell(K,M)$ is~$C^\infty$ and $C^\ell(K,M)$
is endowed with a canonical manifold structure, it follows that $\id^\wedge\colon
C^\ell(K,M)\times K\to M$, $(\gamma,x)\mto \id(\gamma)(x)=\gamma(x)=\ve(\gamma,x)$
is $C^{\infty,\ell}$.

(b) The map $f:=\id\colon C^\ell(K,M)\to C^\ell(K,M)'$ satisfies
$f^\wedge=\ve$ where $\ve\colon C^\ell(K,M)\times K\to M$
is~$C^{\infty,\ell}$, by~(a). Since $C^\ell(K,M)'$ is endowed with a canonical manifold structure,
it follows that~$f$ is~$C^\infty$. By the same reasoning,
$f^{-1}=\id\colon C^\ell(K,M)'\to C^\ell(K,M)$ is~$C^\infty$.

(c) As $C^\ell(K,N)$ is a submanifold,
the inclusion $\iota\colon C^\ell(K,N)\to C^\ell(K,M)$, $\gamma\mto\gamma$ is~$C^\infty$.
Likewise, the inclusion map $j\colon N\to M$ is~$C^\infty$.
Let $L$ be a manifold (possibly with rough boundary)
modelled on locally convex spaces and $f\colon L\to C^\ell(K,N)$ be a map.
If~$f$ is~$C^k$, then $\iota\circ f$ is~$C^k$, entailing that
$(\iota\circ f)^\wedge\colon L\times K\to M$, $(x,y)\mto f(x)(y)$ is~$C^{k,\ell}$.
As the image of this map is contained in~$N$, which is a submanifold of~$M$,
we deduce that $f^\wedge=(\iota\circ f)^\wedge|^N$ is~$C^{k,\ell}$.
If, conversely, $f^\wedge\colon L\times K\to N$ is $C^{k,\ell}$,
then also $(\iota\circ f)^\wedge=j\circ (f^\wedge)\colon L\times K\to M$ is $C^{k,\ell}$.
Hence $\iota\circ f\colon L\to C^\ell(K,M)$ is~$C^k$ (the manifold structure on the range being
canonical). As $\iota\circ f$ is a $C^k$-map map with image in $C^\ell(K,N)$ which is a submanifold
of $C^\ell(K,M)$, we deduce that~$f$ is~$C^k$.

(d) If~$L$ is a $C^k$-manifold (possibly with rough boundary) and $f=(f_1,f_2)\colon
L\to C^\ell(K,M_1)\times C^\ell(K,M_2)$
a map, then~$f$ is~$C^k$ if and only if~$f_1$ and $f_2$ are~$C^k$.
As the manifold structures are canonical, this holds if and only if
$f_1^\wedge\colon L\times K\to M_1$ and $f_2^\wedge\colon L\times K\to M_2$
are $C^{k,\ell}$, which holds if and only if $f^\wedge=(f_1^\wedge,f_2^\wedge)$
is $C^{k,\ell}$.
\end{proof}
\begin{prop}\label{fstar-gen}
Assume that $C^\ell(K,M)$ and $C^\ell(K,N)$
admit canonical manifold structures.
If $\Omega\subseteq K\times M$ is an open subset
and $f\colon \Omega \to N$ is a $C^{k+\ell}$-map,
then
\[
\Omega':=\{\gamma\in C^\ell(K,M)\colon \graph(\gamma)\subseteq\Omega\}
\]
is an open subset of $C^\ell(K,M)$ and
\[
f_\star\colon \Omega' \to C^\ell(K,N),\;\, \gamma\mto f\circ (\id_K,\gamma)
\]
is a $C^k$-map.
\end{prop}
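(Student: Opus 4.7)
The plan is to split the proposition into two independent assertions: the openness of $\Omega'$ in $C^\ell(K,M)$, which is a purely topological statement about the compact-open $C^\ell$-topology, and the $C^k$-smoothness of $f_\star$, which I would derive from the canonical manifold structure together with Lemma~\ref{base-cano}(a).

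For the openness of $\Omega'$, I would fix $\gamma_0\in\Omega'$ and exploit that $\graph(\gamma_0)$ is a compact subset of the open set $\Omega\subseteq K\times M$. Using continuity of~$\gamma_0$ together with regularity of the compact Hausdorff space~$K$, I would at each $x\in K$ choose open neighborhoods $U_x\subseteq K$ of~$x$ and $V_x\subseteq M$ of $\gamma_0(x)$ such that $\overline{U_x}\times V_x\subseteq\Omega$ and $\gamma_0(\overline{U_x})\subseteq V_x$. A finite subcover $U_{x_1},\ldots,U_{x_n}$ of~$K$ produces compacta $K_i:=\overline{U_{x_i}}$ with $K=\bigcup_i K_i$, and the set
\[
W:=\{\gamma\in C(K,M):\gamma(K_i)\subseteq V_{x_i}\;\,\text{for all}\;\, i=1,\ldots,n\}
\]
is a basic open neighborhood of $\gamma_0$ in the compact-open topology on $C(K,M)$ satisfying $\graph(\gamma)\subseteq\bigcup_i K_i\times V_{x_i}\subseteq\Omega$ for every $\gamma\in W$. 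Since the compact-open $C^\ell$-topology on $C^\ell(K,M)$ is the initial topology with respect to the $T^j$ maps and in particular refines the topology induced by $T^0\colon C^\ell(K,M)\to C(K,M)$, the trace $W\cap C^\ell(K,M)$ is an open neighborhood of~$\gamma_0$ contained in~$\Omega'$.

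For the smoothness of~$f_\star$, I would view $\Omega'$ as an open submanifold of~$C^\ell(K,M)$ and invoke the defining property of the canonical manifold structure on the codomain $C^\ell(K,N)$: it suffices to verify that
\[
(f_\star)^\wedge\colon\Omega'\times K\to N,\quad(\gamma,x)\mapsto f(x,\gamma(x))
\]
is~$C^{k,\ell}$. I would factor this as $(f_\star)^\wedge=f\circ\Psi$ with
\[
\Psi\colon\Omega'\times K\to\Omega,\quad(\gamma,x)\mapsto(x,\gamma(x)).
\]
The first component of $\Psi$ is the projection $\Omega'\times K\to K$ (trivially~$C^{\infty,\ell}$), while its second component is the restriction of the evaluation map~$\ve\colon C^\ell(K,M)\times K\to M$, which is $C^{\infty,\ell}$ by Lemma~\ref{base-cano}(a). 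Hence $\Psi$ is $C^{\infty,\ell}$, and composing it with the $C^{k+\ell}$-map~$f$ yields the desired $C^{k,\ell}$-property of~$(f_\star)^\wedge$.

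The main obstacle I expect is precisely this last composition step: the chain rule guaranteeing that composing a $C^{k+\ell}$-map with a $C^{k,\ell}$-map returns a $C^{k,\ell}$-map. Establishing this amounts to careful bookkeeping of iterated directional derivatives in the Bastiani calculus for mixed-order differentiability, and I would invoke the corresponding statement from~\cite{AaS} (or~\cite{GaN}) rather than redo the combinatorics from scratch.
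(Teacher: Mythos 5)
Your proof is correct and follows essentially the same route as the paper: openness of $\Omega'$ is reduced to the compact-open topology on $C(K,M)$ (where the paper cites the coincidence of that topology with the graph topology, you prove the underlying tube-lemma fact directly, which is fine), and smoothness of $f_\star$ is obtained exactly as in the paper by checking that $(f_\star)^\wedge=f\circ(\pr_K,\ve)$ is $C^{k,\ell}$ via Lemma~\ref{base-cano}\,(a) and the chain rule for $C^{k,\ell}$-maps. The chain-rule step you flag as the main obstacle is precisely the one the paper also invokes (it is available in the cited $C^{k,\ell}$-calculus references), so no gap remains.
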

\begin{proof}
As the compact open topology on $C(K,M)$ coincides with the graph topology
(see, e.g., \cite[Proposition A.5.25]{GaN}),
$\{\gamma\in C(K,M)\colon \graph(\gamma)\subseteq \Omega\}$
is open in $C(K,M)$. As a consequence,
$\Omega'$ is open in $C^\ell(K,M)$,
exploiting that $C^\ell(K,M)$ is endowed with the compact-open
$C^\ell$-topology\footnote{We only included this requirement in Definition~\ref{defcanon}
to enable the current argument.}
which
is finer that the compact-open topology.
By Lemma~\ref{base-cano}\,(a), the map $\ve\colon C^\ell(K,M)\times K\to M$,
$(\gamma,x)\mto\gamma(x)$ is $C^{\infty,\ell}$ and hence~$C^{k,\ell}$,
whence also $C^\ell(K,M)\times K\to K\times M$, $(\gamma,x)\mto (x,\gamma(x))$
is $C^{k,\ell}$. Since $f$ is $C^{k+\ell}$, the Chain Rule shows that
\[
(f_\star)^\wedge\colon \Omega' \times K\to N, \;\,
(\gamma,x)\mto f_\star(\gamma)(x)
=f(x,\gamma(x))
\]
is $C^{k,\ell}$. So $f_\star$ is~$C^k$,
as the manifold structure on $C^\ell(K,N)$ is canonical.
\end{proof}
\begin{cor}\label{fstar-basic}
Assume that $C^\ell(K,M)$ and $C^\ell(K,N)$
admit canonical manifold structures.
If $f\colon K\times M\to N$ is a $C^{k+\ell}$-map,
then
\[
f_\star\colon C^\ell(K,M)\to C^\ell(K,N),\;\, \gamma\mto f\circ (\id_K,\gamma)
\]
is a $C^k$-map.\,\Punkt
\end{cor}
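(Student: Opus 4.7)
The plan is to deduce this as a direct special case of Proposition~\ref{fstar-gen}. I would simply take $\Omega := K\times M$, which is an open (in fact, the full) subset of $K\times M$. Then the associated set
\[
\Omega' = \{\gamma\in C^\ell(K,M)\colon \graph(\gamma)\subseteq K\times M\}
\]
coincides with all of $C^\ell(K,M)$, since the graph of any $\gamma\colon K\to M$ is automatically contained in $K\times M$. Thus the domain restriction in the proposition disappears.

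Since $f\colon K\times M\to N$ is given to be $C^{k+\ell}$ and both $C^\ell(K,M)$ and $C^\ell(K,N)$ carry canonical manifold structures by hypothesis, Proposition~\ref{fstar-gen} applies verbatim and yields that
\[
f_\star\colon C^\ell(K,M)\to C^\ell(K,N),\quad \gamma\mto f\circ(\id_K,\gamma)
\]
is a $C^k$-map. No further calculation or obstacle is involved; the corollary is essentially a bookkeeping statement recording the most frequently used case of the proposition, namely when the graph condition is vacuous.
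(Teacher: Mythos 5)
Your proposal is correct and matches the paper's intent exactly: the corollary is stated with an end-of-proof mark because it is the special case $\Omega=K\times M$ of Proposition~\ref{fstar-gen}, where the graph condition is vacuous and $\Omega'=C^\ell(K,M)$. Nothing further is needed.
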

Applying Corollary~\ref{fstar-basic} with $f(x,y):=g(y)$, we get:
\begin{cor}\label{la-reu}
Assume that $C^\ell(K,M)$ and $C^\ell(K,N)$
admit canonical manifold structures.
If $g\colon M\to N$ is a $C^{k+\ell}$-map,
then
\[
g_*:=C^\ell(K,g) \colon C^\ell(K,M)\to C^\ell(K,N),\;\, \gamma\mto g\circ \gamma
\]
is a $C^k$-map.\,\Punkt
\end{cor}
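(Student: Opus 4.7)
The plan is to reduce this statement to Corollary~\ref{fstar-basic} by rewriting $g_*(\gamma) = g\circ\gamma$ in the form $f\circ (\id_K,\gamma)$ for a suitable map $f\colon K\times M\to N$ that simply ignores its first argument.

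First, I would define $f\colon K\times M\to N$ by $f(x,y)\defi g(y)$. This $f$ factors as the composition $g\circ \pr_2$ of the smooth projection $\pr_2\colon K\times M\to M$ with the $C^{k+\ell}$-map~$g$, hence the Chain Rule immediately gives that $f$ is $C^{k+\ell}$. The hypotheses of Corollary~\ref{fstar-basic} are therefore met (the canonicity assumptions on $C^\ell(K,M)$ and $C^\ell(K,N)$ are inherited from the present statement).

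Next, Corollary~\ref{fstar-basic} yields that
\[
f_\star\colon C^\ell(K,M)\to C^\ell(K,N),\quad \gamma\mto f\circ (\id_K,\gamma)
\]
is a $C^k$-map. A pointwise check then identifies $f_\star$ with~$g_*$: for every $\gamma\in C^\ell(K,M)$ and every $x\in K$,
\[
f_\star(\gamma)(x) = f(x,\gamma(x)) = g(\gamma(x)) = (g\circ\gamma)(x),
\]
so $f_\star(\gamma) = g\circ\gamma = g_*(\gamma)$. Consequently $g_* = f_\star$ is a $C^k$-map.

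There is essentially no obstacle here; the statement is an immediate specialisation of Corollary~\ref{fstar-basic} to the case of a map $f$ that does not depend on the $K$-variable. The only conceptual step is to observe that the superposition operator $\gamma\mto g\circ\gamma$ fits the template $\gamma\mto f\circ(\id_K,\gamma)$ with this trivial choice of~$f$.
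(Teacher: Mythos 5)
Your proposal is correct and is exactly the paper's own argument: the paper derives this corollary by "Applying Corollary~\ref{fstar-basic} with $f(x,y):=g(y)$", which is precisely your choice of $f$, and your verification that $f$ is $C^{k+\ell}$ and that $f_\star=g_*$ just makes the routine details explicit.
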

We emphasise that the usual construction of manifolds of mappings using a local addition (see Definition \ref{def-loa}) produces canonical manifold structures.
This is recorded in the next proposition which slighly generalizes well-known constructions
(cf.\ \cite{Eel});\footnote{In this connection, the second author also profited from a talk by P.W.\ Michor at the 50th Seminar Sophus Lie, 2016 in Bedlewo (Poland).}
the proof can be found in Appendix~\ref{map-mfd}.
\begin{prop}\label{prop: can:locadd}
If~$M$ admits a local addition, then $C^\ell(K,M)$
admits a canonical smooth manifold structure, for each $\ell\in\N_0\cup\{\infty\}$.
Its underlying topology is as in \emph{\ref{the-topo}}
and the tangent bundle can be identified with the manifold $C^\ell (K,TM)$.
\end{prop}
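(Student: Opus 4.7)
My plan is to carry out the classical Eells-type construction of charts via a local addition and then to reduce the canonical property to the exponential law for $C^{k,\ell}$-maps into locally convex spaces. Fix a local addition $\Sigma\colon V\to M$, where $V\sub TM$ is an open neighborhood of the zero section such that $\theta:=(\pi_{TM},\Sigma)\colon V\to W$ is a $C^\infty$-diffeomorphism onto an open neighborhood~$W$ of the diagonal in $M\times M$. For each $\gamma\in C^\ell(K,M)$, consider the pullback bundle $\gamma^*TM\to K$; its space of $C^\ell$-sections $\Gamma_{C^\ell}(\gamma^*TM)$ inherits a locally convex topology from $C^\ell(K,TM)$. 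Put
\[
U_\gamma:=\{\eta\in C^\ell(K,M)\colon (\gamma(x),\eta(x))\in W\text{ for all }x\in K\}
\]
and $O_\gamma:=\{\sigma\in\Gamma_{C^\ell}(\gamma^*TM)\colon \sigma(K)\sub V\}$, and define the chart
\[
\varphi_\gamma\colon U_\gamma\to O_\gamma,\qquad \varphi_\gamma(\eta)(x):=\theta^{-1}(\gamma(x),\eta(x)),
\]
with inverse $\sigma\mto\Sigma\circ\sigma$.

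Next, I would verify that this atlas is smooth. Openness of $U_\gamma$ in the compact-open $C^\ell$-topology of~\ref{the-topo} is automatic from its graph-theoretic description, because that topology refines the compact-open topology on $C(K,M)$ (cf.~\cite[Proposition~A.5.25]{GaN}). Each transition $\varphi_{\gamma_2}\circ\varphi_{\gamma_1}^{-1}$ is a fibrewise composition of sections with a smooth map built from $\theta^{-1}$ and $\Sigma$, and its smoothness is an instance of the pushforward principle (Corollary~\ref{fstar-basic}) applied in tangent-bundle charts on~$M$. The underlying topology coincides with the compact-open $C^\ell$-topology by comparing both restrictions to $U_\gamma$ with the topology on $\Gamma_{C^\ell}(\gamma^*TM)\sub C^\ell(K,TM)$, invoking Lemma~\ref{top-cover}.

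For the canonical property, a map $f\colon L\to C^\ell(K,M)$ with $f(L)\sub U_\gamma$ is $C^k$ precisely when $\varphi_\gamma\circ f\colon L\to \Gamma_{C^\ell}(\gamma^*TM)$ is~$C^k$. By the $C^{k,\ell}$-exponential law established for locally convex targets and rough-boundary sources in \cite{AaS} and~\cite{GaN}, this is equivalent to the adjoint $L\times K\to \gamma^*TM\sub TM$ being $C^{k,\ell}$, which under the smooth diffeomorphism $\theta$ translates into $f^\wedge$ being~$C^{k,\ell}$. The general statement follows by localizing along any chart cover of~$L$. The tangent-bundle identification $TC^\ell(K,M)\cong C^\ell(K,TM)$ is then obtained from the chart-theoretic description $TU_\gamma\cong O_\gamma\times\Gamma_{C^\ell}(\gamma^*TM)$ and the observation that the resulting total space parametrizes exactly $C^\ell(K,V)$, hence $C^\ell(K,TM)$ after gluing over~$\gamma$.

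The main obstacle is the $C^{k,\ell}$-exponential law for compact $K$ with rough boundary and locally convex target; this is the technical engine of the argument and is precisely what the Appendix~\ref{map-mfd} machinery (building on \cite{AaS} and~\cite{GaN}) is designed to supply. A secondary subtlety is the smoothness of the transition maps in the non-Banach setting, where mean-value arguments are not available; this is why the pushforward/superposition result has to be installed independently before this proposition and then invoked here.
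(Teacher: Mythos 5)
Your construction of the atlas and your verification that the resulting structure is canonical follow the same route as the paper: the charts $\varphi_\gamma(\eta)=\theta^{-1}\circ(\gamma,\eta)$ with inverse $\sigma\mto\Sigma\circ\sigma$, openness of $U_\gamma$ via the graph topology, and the reduction of both the smoothness of transitions and the canonical property to the $C^{k,\ell}$-exponential law of \cite{AaS} (packaged in the paper as the section-space exponential law, Lemma~\ref{expsect}, and Lemma~\ref{la:cano:mfdmap}). One caveat: you cannot literally invoke Corollary~\ref{fstar-basic} for the transition maps, since that corollary presupposes canonical manifold structures on the mapping spaces involved and would be circular here; what is actually needed (and what the paper uses) is the exponential law applied directly to the locally convex section spaces $\Gamma_f$, together with the $C^{\infty,\ell}$-property of the evaluation $\ve\colon\Gamma_f\times K\to TM$. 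Since you flag the exponential law as the real engine, this is a presentational slip rather than a conceptual one.

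The genuine gap is in the tangent-bundle identification. Your argument stops at the chart-level trivialization $TU_\gamma\cong O_\gamma\times\Gamma_{C^\ell}(\gamma^*TM)$ and asserts that the total space ``parametrizes exactly $C^\ell(K,V)$, hence $C^\ell(K,TM)$ after gluing over $\gamma$.'' But $C^\ell(K,TM)$ only carries a canonical smooth structure once one knows that $TM$ itself admits a local addition — this is not given and must be constructed (the paper's Lemma~\ref{addTM}, which builds $\Sigma_{TM}=(T\Sigma)\circ\kappa$ using the canonical flip $\kappa$ on $T^2M$). Moreover, the gluing is exactly where the work lies: one must exhibit a globally defined map $\Phi\colon TC^\ell(K,M)\to C^\ell(K,TM)$ (the paper uses $\Phi(v)=(T\ve_x(v))_{x\in K}$), check it is chart-independent, and verify in the charts $\phi_f$ of both sides that it is a $C^\infty$-diffeomorphism and fibrewise linear. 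This requires normalizing the local addition (Lemma~\ref{cannormalize}) so that $T_{0_p}(\Sigma|_{T_pM})=\id$, and the bundle isomorphism $\Theta\colon TM\oplus TM\to\pi_{T^2M}^{-1}(0_M)$ of Lemma~\ref{infra}, culminating in the identity $\Phi\circ T\phi_f=\phi_{0\circ f}\circ\Psi_f$ of Theorem~\ref{thetabu}. None of this is automatic from the local trivializations alone, so as written the last part of your proof does not yet establish the claimed identification $TC^\ell(K,M)\cong C^\ell(K,TM)$.
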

Every paracompact finite-dimensional smooth manifold~$M$ admits
a local addition (e.g., one can choose a Riemannian metric on~$M$
and restrict the Riemannian exponential map to a suitable neighborhood
of the zero-section). 
Every Lie group $G$ modelled on a locally convex space~$E$
admits a local addition.\footnote{Given $g\in G$, let $\lambda_g\colon G\to G$, $x\mto gx$
be left translation by~$g$. Consider the smooth map
$\omega\colon TG\to T_eG=:{\mathfrak g}$, $v\mto (T\lambda_{\pi_{TG}(v)})^{-1}(v)$
and choose a $C^\infty$-diffeomorphism $\psi\colon V\to W$ from
an open $0$-neighborhood $V\subseteq {\mathfrak g}$ onto an open identity-neighborhood
$W\subseteq G$ such that $\psi(0)=e$. Define $U:=\bigcup_{g\in G}T\lambda_g (V)$.
Then $\Sigma\colon U\to G$, $v\mto \pi_{TG}(v)\psi(\omega(v))$
is a local addition for~$G$.}
We refer to \cite{SaW} for more information on local additions on (infinite-dimensional) Lie groupoids.

The tangent map of the push-forward map $C^\ell (K,f)$ can be identified in the presence of a local addition. As recalled in
Appendix \ref{map-mfd}, up to certain (explicit) bundle isomorphisms it has the following form.

\begin{numba}\label{thetamp}
Assume that $M,N$ admit local additions and $f \colon M \rightarrow N$ is a $C^{\ell +1}$ map. Then the identification $TC^\ell (K,M)\cong C^\ell (K,TM)$ induces a commuting diagram
\begin{equation}\label{eq:commdiag}
\begin{tikzcd}
TC^{\ell} (K,M) \arrow[r, "\cong"] \arrow[d, "TC^\ell(K{,}f)"]
& C^\ell (K,TM)  \arrow[d, "C^\ell (K{,}Tf)"]  \\
TC^\ell (K,N)  \arrow[r, "\cong"] & C^\ell (K,TN);
\end{tikzcd}
\end{equation}
see Corollary~\ref{formtang} for details.
\end{numba}
\section{Lifting properties of maps to manifolds of mappings}\label{sec: m:prop}
In this section, we prove that certain properties of mappings between manifolds
(e.g.\ the submersion property) are inherited by the corresponding
push-forward mappings between infinite-dimensional manifolds of mappings.
Notably, we shall prove Theorem~E, in several steps.
Whenever we refer to the theorem,
$K$ will denote a compact smooth manifold (possibly with rough boundary).
Moreover, $M$ and $N$ will be smooth manifolds admitting a local addition,
and $k,\ell \in \N_0 \cup\{\infty\}$.

\subsection*{Submersions between manifolds of mappings}\label{sec-sub}
  \addcontentsline{toc}{subsection}{Submersions between manifolds of mappings}
The proof of Theorem~E (a) relies on two lemmas,
which show that certain linear mappings between spaces of sections split.
To start with, we consider the case of
trivial vector bundles.
\begin{la}\label{triv-case}
Let $X$ and~$Y$ be locally convex spaces, $Z$ be a Banach space,
$U\subseteq X$ be a locally convex subset with dense interior and $f\colon U\times Y\to Z$ be a $C^\ell$-map
such that $f_x\coloneq f(x,\cdot)\colon Y\to Z$ is linear for each
$x\in U$. If $Z$ is infinite-dimensional and $\ell=0$, assume that~$f$ is~$C^1$.
If $x_0\in U$ such that $f_{x_0}$ is a split linear surjection,
then 
\[
(f|_{U_0})_\star\colon C^\ell(U_0,Y)\to C^\ell(U_0,Z),\;\, \gamma\mto f\circ (\id_{U_0},\gamma)
\]
is a split linear surjection for some open $x_0$-neighborhood
$U_0\subseteq U$.
\end{la}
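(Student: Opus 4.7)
The plan is to reduce to a pointwise invertibility problem using the right inverse~$\sigma$. Pick a continuous linear right inverse $\sigma\colon Z\to Y$ for $f_{x_0}$ and define $g\colon U\times Z\to Z$, $g(x,z):=f(x,\sigma(z))$. By the Chain Rule $g$ has the same regularity as $f$, is linear in~$z$, and satisfies $g(x_0,\cdot)=\id_Z$. Writing $T(x):=g(x,\cdot)\in\cL(Z)$, my aim is to produce an open $x_0$-neighborhood $U_0\subseteq U$ on which each $T(x)$ is invertible and on which the map $h\colon U_0\times Z\to Y$ given by $h(x,z):=\sigma(T(x)^{-1}(z))$ is $C^\ell$ (resp.\ $C^1$ in the special case). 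Once this is achieved, an application of the pushforward result (Corollary~\ref{fstar-basic} and its variants in the locally convex setting) yields a continuous linear map
\[
s\colon C^\ell(U_0,Z)\to C^\ell(U_0,Y),\qquad s(\eta)(x):=\sigma\bigl(T(x)^{-1}(\eta(x))\bigr),
\]
and the pointwise identity $f\bigl(x,\sigma(T(x)^{-1}z)\bigr)=T(x)T(x)^{-1}z=z$ shows $s$ is a right inverse of $(f|_{U_0})_\star$, establishing the split.

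The technical heart is showing $T\colon U\to\cL(Z)_b$ is continuous at~$x_0$ (so that openness of $\GL(Z)\subseteq\cL(Z)_b$ delivers~$U_0$) and moreover inherits the $C^\ell$ regularity of~$g$. For $Z$ finite-dimensional this is immediate: $\cL(Z)$ is then finite-dimensional, all Hausdorff vector topologies on it coincide, and pointwise continuity of $T$ (resp.\ its derivatives) obtained from joint continuity of $g$ already gives operator-norm regularity. For $Z$ infinite-dimensional I would exploit the $C^1$ hypothesis together with the Banach structure: inside a convex $x_0$-neighborhood the fundamental theorem of calculus yields
\[
T(x)z-z=\int_0^1 d_1 g\bigl(x_0+t(x-x_0),\,z;\,x-x_0\bigr)\,dt.
\]
The integrand is linear in~$z$ (because $g$ is) and linear in $x-x_0$ (because $g$ is $C^1$ in Bastiani's sense); combining joint continuity of $d_1 g$ with a Banach--Steinhaus argument in the Banach space~$Z$, one obtains a bound on the integrand in operator norm by a quantity that tends to zero as $x\to x_0$, so $\|T(x)-\id_Z\|\to 0$ in operator norm. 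Iterating the same idea on higher derivatives upgrades $T\colon U\to\cL(Z)_b$ to a $C^\ell$-map.

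With $T$ continuous into~$\cL(Z)_b$ at~$x_0$, I choose an open $U_0\ni x_0$ on which $T(U_0)\subseteq\GL(Z)$. Smoothness of the inversion map on the Banach Lie group $\GL(Z)$ implies that $x\mapsto T(x)^{-1}$ has the same regularity as $T$, and evaluating against $z$ via the continuous bilinear map $\cL(Z)_b\times Z\to Z$ shows $h$ is $C^\ell$ (resp.\ $C^1$), which is exactly what is needed. The principal obstacle is the passage from pointwise (strong-operator) continuity of $T$ --- which is automatic from joint continuity of~$g$ --- to the stronger operator-norm continuity in the infinite-dimensional case. Without this upgrade, invertibility of $T(x)$ on a whole neighborhood is not guaranteed, because $\GL(Z)$ is not open in the strong operator topology; this is precisely why both the Banach hypothesis on $Z$ and the extra $C^1$ assumption (when $\ell=0$ and $Z$ is infinite-dimensional) are built into the statement.
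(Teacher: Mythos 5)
Your overall strategy coincides with the paper's: composing with a continuous linear right inverse $\sigma$ of $f_{x_0}$ to obtain a family $T(x)=f_x\circ\sigma$ of operators on the Banach space $Z$ is, up to the isomorphism $\sigma\colon Z\to E:=\sigma(Z)$, exactly the paper's device of restricting $f_x$ to a closed complement $E$ on which $f_{x_0}$ is invertible; the endgame (openness of the invertibles in the operator-norm topology, pointwise inversion, pushforward, verification of the right-inverse identity) is identical. There are, however, two genuine gaps in your treatment of the technical core. First, Banach--Steinhaus is the wrong tool for the operator-norm continuity of $T$: it converts pointwise boundedness into uniform boundedness, but it does not convert the pointwise smallness of $d_1g(x',z;x-x_0)$ into smallness of its operator norm (pointwise convergence of operators to $0$ does not imply norm convergence). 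What actually makes the step work is that $d_1g$ is bilinear in the pair $(z,\,x-x_0)$ and vanishes when the second and third arguments are $0$: rescaling both slots turns joint continuity at the single point $(x_0,0,0)$ into a bound $\sup_{\|z\|\le 1}\|d_1g(x',z;v)\|\le r/s$ for $v\in rW$, which tends to $0$ as $r\to 0$. (The paper simply cites this as \cite[Lemma~1.5.9]{GaN}.)

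Second, and more seriously, the claim that ``iterating the same idea'' upgrades $T\colon U\to\cL(Z)_b$ to a $C^\ell$-map is false in general: differentiability of $g^\vee$ into $\cL(Z)_b$ requires the difference quotients of $g$ to converge \emph{uniformly} on bounded subsets of $Z$, and by the same phenomenon as above each derivative of $T$ in the operator-norm topology costs one extra degree of differentiability of $g$. For instance, $g(t,z)=\int_0^t A(\tau)z\,d\tau$ with $A$ uniformly bounded and strongly continuous but not norm continuous (e.g.\ the diagonal operators with entries $e^{i\tau n}$ on $\ell^2$) is $C^1$ and linear in $z$, yet $g^\vee$ is not $C^1$ into $\cL(Z)_b$. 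Your route therefore only yields $h\in C^{\ell-1}$, hence a right inverse on $C^{\ell-1}$-spaces, i.e.\ it proves the lemma only under the stronger hypothesis that $f$ is $C^{\ell+1}$. The paper circumvents this by using the operator-valued map for \emph{continuity} only (to locate $U_0$ and to see that the pointwise inverse $(x,z)\mapsto (f_x|_E)^{-1}(z)$ is continuous), and then invoking \cite[Lemma~2.3]{DiK}, which upgrades the pointwise-inverse map $U_0\times Z\to Y$ directly to $C^\ell$ from the $C^\ell$-property of $f$ together with continuity of the inverse, without ever differentiating an $\cL(Z)_b$-valued map. Replacing your last paragraph by an argument of that kind repairs the proof.
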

\begin{proof}
As $f_{x_0}$ is a split linear surjection, there exists a closed
vector subspace $E\subseteq Y$ such that $f_{x_0}|_E\colon E\to Z$
is an isomorphism of topological vector spaces.
The inclusion map $j_E\colon E\to Y$ is continuous linear, entailing that
the restriction map
\[
\rho_E:=\cL(j_E,Z)\colon\cL(Y,Z)_b\to\cL(E,Z)_b,\;\, \alpha\mto\alpha\circ j_E=\alpha|_E
\]
is continuous linear.
If $f$ is $C^1$, then
\[
f^\vee\colon U\to\cL(Y,Z)_b, \;\, x\mto f_x
\]
is continuous, by \cite[Lemma~1.5.9]{GaN}.
If $Y$ is finite-dimensional and $\ell=0$, then $\cL(Y,Z)_c=\cL(Y,Z)_b$,
whence $f^\vee\colon U\to\cL(Y,Z)_c=\cL(Y,Z)_b$ is continuous by
\cite[Proposition~A.5.17]{GaN}.
In any case, $f^\vee\colon U\to\cL(Y,Z)_b$ is continuous, whence also
\[
h:=\rho_E\circ f^\vee \colon U\to\cL(E,Z)_b,\;\, x\mto f_x|_E
\]
is continuous.
Since the set $\Iso(E,Z)$ of invertible operators is open in $\cL(E,Z)_b$, there exists an open $x_0$-neighborhood
$U_0\subseteq U$ such that $h(U_0)\subseteq\Iso(E,Z)$. As the inversion map
\[
\iota\colon \cL(E,Z)_b\supseteq \Iso(E,Z) \to\cL(Z,E)_b, \;\, \alpha\mto\alpha^{-1}
\]
is continuous, we deduce that the map
$\iota\circ h|_{U_0}\colon U_0\to \cL(Z,E)_b$, $x\mto h(x)^{-1}=(f_x|_E)^{-1}$
is continuous and hence also the map
\[
g\colon U_0\times Z\to E\subseteq Y,\;\, g(x,z):=(\iota\circ h)(x)(z)=(f_x|_E)^{-1}(z)
\]
(as the evaluation map $\cL(Z,E)_b\times Z\to E$, $(T,y)\mto Ty$ is continuous).
Since~$f$ is~$C^\ell$, this entails that~$g$ is~$C^\ell$
(see \cite[Lemma~2.3]{DiK}; cf.\ also \cite[Exercise 1.3.10]{GaN}, \cite[Theorem 5.3.1]{Ham}).
As a consequence, the linear map
\[
g_\star\colon C^\ell(U_0,Z)\to C^\ell(U_0,Y),\, \gamma\mto g\circ (\id_{U_0},\gamma)
\]
is continuous (see \cite[Proposition 1.7.12]{GaN}). It only remains to check that~$g_\star$ is a right inverse to
$(f|_{U_0})_\star$. But $(f|_{U_0})_\star(g_\star(\gamma))(x)=
f(x,g(x,\gamma(x)))=f_x((f_x|_E)^{-1}(\gamma(x)))=\gamma(x)$
for all $\gamma\in C^\ell(U_0,Z)$ and $x\in U_0$,
entailing that $(f|_{U_0})_\star(g_\star(\gamma))=\gamma$
and thus $(f|_{U_0})_\star\circ g_\star=\id$.
\end{proof}
\begin{la}\label{lin-case}
Let $M$ be a manifold $($possibly with rough boundary$)$, such that~$M$ is smoothly paracompact.\footnote{Recall that a manifold is smoothly paracompact if it admits smooth partitions of unity, see \cite[Section 16]{KaM}.}
Let $\pi_E\colon E\to M$ be a $C^\ell$-vector bundle
over~$M$,
and $\pi_F\colon F\to M$
be a $C^\ell$-vector bundle over~$M$ whose fibers are Banach spaces.
Let $f\colon E\to F$ be a vector bundle map $($over $\id_M)$\footnote{Thus $f$ is a $C^\ell$-map
such that $f(E_x)\subseteq F_x$ for each $x\in M$ and $f|_{E_x}\colon E_x\to F_x$ is linear.}
of class~$C^\ell$
such that $f|_{E_x}\colon E_x\to F_x$ is a split linear surjection,
for each $x\in M$. If $\ell=0$ and~$F$ is not of finite rank, assume, moreover, that~$E$, $F$, and~$f$ are~$C^1$. Then also
\[
f_* \colon \Gamma_{C^\ell}(E)\to\Gamma_{C^\ell}(F),\;\,
\sigma\mto f\circ\sigma
\]
is a split linear surjection.
\end{la}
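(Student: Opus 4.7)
The plan is to reduce the assertion to the trivial-bundle case already handled by Lemma~\ref{triv-case}, and then glue the resulting local continuous linear right inverses into a global one via a smooth partition of unity (using that~$M$ is smoothly paracompact). Linearity of $f_*$ is immediate from fibrewise linearity of~$f$, and its continuity follows from Corollary~\ref{la-reu} applied in local trivializations, so the task reduces to constructing a continuous linear right inverse $r\colon \Gamma_{C^\ell}(F)\to \Gamma_{C^\ell}(E)$ of $f_*$. This identifies $f_*$ as a split linear surjection in the topological-vector-space sense.

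For the local step, I would fix $x_0\in M$ and pick an open neighborhood $W\subseteq M$ of~$x_0$ that is at once a chart domain of~$M$, via $\psi\colon W\to U\subseteq X$, and a common trivializing neighborhood for~$E$ and~$F$, yielding isomorphisms $E|_W\cong U\times Y$ and $F|_W\cong U\times Z$ with $Z$ a Banach space. Under these identifications, $f$ becomes a $C^\ell$-map $\hat f\colon U\times Y\to Z$ linear in its second argument, and the fibrewise-splitness hypothesis reads that $\hat f(\psi(x_0),\cdot)$ is a split linear surjection. The auxiliary $C^1$-hypothesis when $\ell=0$ and $Z$ is infinite-dimensional is exactly what is needed to match the hypothesis of Lemma~\ref{triv-case}. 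Applying that lemma yields a neighborhood $U_{x_0}\subseteq U$ of~$\psi(x_0)$ together with a continuous linear right inverse of $(\hat f|_{U_{x_0}})_\star$; transporting this through the trivializations and chart produces a continuous linear right inverse
\[
r_{x_0}\colon \Gamma_{C^\ell}(F|_{W_{x_0}})\to \Gamma_{C^\ell}(E|_{W_{x_0}}),
\]
where $W_{x_0}:=\psi^{-1}(U_{x_0})$.

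For the gluing step, I would use smooth paracompactness to pick a locally finite open refinement $(V_i)_{i\in I}$ of the cover $(W_{x_0})_{x_0\in M}$, and a subordinate smooth partition of unity $(\phi_i)_{i\in I}$ with $\supp\phi_i\subseteq V_i$; write $r_i$ for the right inverse attached to~$V_i$. The candidate global right inverse is
\[
r(\tau)(x):=\sum_{i\in I}\phi_i(x)\,r_i(\tau|_{V_i})(x),
\]
where each summand is extended by zero outside~$V_i$ using the fibrewise vector-space structure of~$E$ (well defined since $\supp\phi_i\subseteq V_i$), and the sum is locally finite. Linearity of~$r$ is clear, and fibrewise linearity of~$f$ gives
\[
f(r(\tau)(x))=\sum_{i\in I}\phi_i(x)\,f(r_i(\tau|_{V_i})(x))=\sum_{i\in I}\phi_i(x)\,\tau(x)=\tau(x),
\]
so $f_*\circ r=\id_{\Gamma_{C^\ell}(F)}$ as required.

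The main obstacle will be verifying continuity of~$r$ into the locally convex topology on $\Gamma_{C^\ell}(E)$. Since this topology is inherited from $C^\ell(M,E)$, Lemma~\ref{top-cover} implies it is initial with respect to the restriction maps $\Gamma_{C^\ell}(E)\to \Gamma_{C^\ell}(E|_{V_j})$, so it suffices to check each $\tau\mto r(\tau)|_{V_j}$ is continuous. By local finiteness of the cover, only finitely many indices~$i$ satisfy $V_i\cap V_j\neq\emptyset$, reducing $r(\tau)|_{V_j}$ to a finite sum of maps of the form $\tau\mto (\phi_i\cdot r_i(\tau|_{V_i}))|_{V_j}$. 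Each such map is a composition of continuous linear operations on the relevant section spaces, namely restriction to~$V_i$, the continuous linear $r_i$, multiplication by the fixed $C^\ell$-function $\phi_i$, extension by zero across $\supp\phi_i\subseteq V_i$, and restriction to~$V_j$. A finite sum of continuous linear maps is continuous linear, completing the construction of the split.
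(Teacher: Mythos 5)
Your proposal is correct and follows essentially the same route as the paper's proof: localize via common trivializing neighborhoods, invoke Lemma~\ref{triv-case} to get local continuous linear right inverses, glue them with a smooth partition of unity subordinate to the cover, verify the right-inverse identity pointwise from fibrewise linearity and $\sum_i\phi_i=1$, and establish continuity via local finiteness together with Lemma~\ref{top-cover}. The only differences are cosmetic (the paper indexes the partition of unity by the points of $M$ rather than passing to a refinement, and leaves some of the continuity bookkeeping more implicit).
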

\begin{proof}
For each $x\in M$, there exists an open $x$-neighborhood
$U_x\subseteq M$ such that $E|_{U_x}$ and $F|_{U_x}$
are trivial. By Lemma~\ref{triv-case},
the continuous linear map
\[
(f|_{E|_{U_x}})_*\colon \Gamma_{C^\ell}(E|_{U_x})\to \Gamma_{C^\ell}(F|_{U_x}),\;\,
\sigma\mto f|_{E|_{U_x}}\circ\sigma
\]
admits a continuous linear right inverse $\rho_x\colon \Gamma_{C^\ell}(F|_{U_x})\to
\Gamma_{C^\ell}(E|_{U_x})$,
possibly after shrinking the open $x$-neighborhood $U_x\subseteq M$.
Let $(h_x)_{x\in M}$ be a smooth partition of unity on~$M$
such that $\Supp(h_x)\subseteq U_x$ for each $x\in M$.
Then
\[
\rho(\sigma):=\sum_{x\in M} (h_x\cdot \rho_x(\sigma|_{U_x}))\widetilde{\;}
\]
defines a linear right inverse $\rho\colon \Gamma_{C^\ell}(F)\to\Gamma_{C^\ell}(E)$
for~$f_*$ (where $\widetilde{\;}$ indicates the extension of the given
section to a global section of $E$ taking points outside $U_x$ to~$0\in E_x$).
Then~$\rho$
is continuous as each $x\in M$ has an open neighborhood
$V_x\subseteq M$ such that $\Phi_x:=\{y\in M\colon
V_x\cap \Supp(h_y)\not=\emptyset\}$ is finite, entailing that
\[
\Gamma_{C^\ell}(F)\to\Gamma_{C^\ell}(E|_{V_x}),\quad
\sigma\mto \rho(\sigma)|_{V_x}=\sum_{y\in \Phi_x} (h_y \cdot \rho_y(\sigma|_{U_y}))\widetilde{\;}
\]
is continuous (cf.\ Lemma~\ref{top-cover}).
\end{proof}
We are now in a position to prove Theorem E\,(a) whose statement we repeat here for the reader's convenience:

\begin{numba}[Theorem E (a)]
\emph{Let $f \colon M \rightarrow N$ be a $C^{\ell+k}$-submersion
and $N$ a Banach manifold, where $2\leq k\in \N\cup\{\infty\}$ and $\ell\in\N_0$. Then $C^\ell(K,f)$
is a $C^k$-submersion, assuming $\ell\geq 1$ if some modelling space of~$N$
is infinite-dimensional.}
\end{numba}

\begin{proof}[Proof of Theorem E\, (a)]
For each $\gamma\in C^\ell(K,M)$, the
map $g\colon \gamma^*(TM)\to (f\circ \gamma)^*(TN)$
taking $v\in \gamma^*(TM)_x=T_{\gamma(x)}(M)$ to
\[
T_{\gamma(x)}f(v)\in T_{f(\gamma(x))}N=(f\circ\gamma)^*(TN)_x
\]
is $C^\ell$ (as can be checked using local trivializations)\footnote{Let $\phi\colon U_\phi\to
V_\phi\subseteq F$
be a chart for~$N$, $\psi\colon U_\psi\to V_\psi\subseteq E$ be a chart for~$M$
with $f(U_\psi)\subseteq U_\phi$ and $W\subseteq K$ be an open subset such that
$\gamma(W)\subseteq U_\psi$. Then $\gamma^*(TM)|_W=\bigcup_{x\in W}\{x\}\times T_{\gamma(x)}M$
and the map
$\theta_\psi\colon \gamma^*(TM)|_W\to W\times E$, $(x,y)\mto (x,d\psi(y))$
is a local trivialization for $\gamma^*(TM)$. An analogous formula
yields a local trivialization $\theta_\phi\colon (f\circ \gamma)^*(TN)|_W\to W\times F$ of $(f\circ\gamma)^*(TN)$.  It remains to note that
$(\theta_\phi\circ g\circ \theta_\psi^{-1})(x,z)=
\big(x,d(\phi\circ f\circ \psi^{-1})(\psi(\gamma(x)),z)\big)$ is $C^\ell$ in $(x,z)$.}
and linear in $v\in \gamma^*(TM)_x$.
Moreover, $g(x,\cdot)$ corresponds to $T_{\gamma(x)}f$ (cf.\ (\ref{nearly}) in Corollary~\ref{formtang}),
whence it
is a split linear surjection. Now
\[
T_\gamma \, C^\ell(K,f)=g_*\colon \Gamma_{C^\ell}(\gamma^*(TM))\to \Gamma_{C^\ell}((f\circ\gamma)^*(TN))
\]
is a split linear surjection, by \ref{thetamp} and Lemma~\ref{lin-case}.
%
% here we use \ell \geq 1 if $TN$ is infinite-dimensional
%
The $C^k$-map $C^\ell(K,f)$ therefore is a na\"{\i}ve submersion in the sense of~\cite{SUB}.
As $\Gamma_{C^\ell}((f\circ\gamma)^*(TN))$
is a Banach space (cf.\ \cite[Section 3]{Wit}) and $k\geq 2$, we deduce from \cite[Theorem~A]{SUB}
that $C^\ell(K,f)$ is a submersion.\end{proof}

\begin{rem}
Note that the proof of Theorem E\, (a) used extensively the Banach manifold structure of $C^\ell (K,N)$ as we have established the submersion property by proving that the push-forward is a na\"{\i}ve submersion. 
Thus the proof will not generalise beyond the Banach setting, e.g.\ for $\ell=\infty$ or $K$ non-compact.
 
However, if $f \colon M \rightarrow N$ is a $C^\ell$-submersion between finite-dimensional manifolds the push-forward $f_* \colon C^\ell (K,M) \rightarrow C^\ell (K,N)$ can be proven to be a submersion in more general cases:
In \cite{AS17}, it was shown that for $\ell = \infty$ and $K$ (possibly non-compact and with smooth boundary or corners), the push-forward $f_*$ is a submersion. This theorem is known as the Stacey-Roberts Lemma. The proof can be generalised to $\ell \in \N_0$ (with $f$ being $C^{\ell+2}$). 
We note, however, that the results presented here are distinct from the Stacey-Roberts Lemma whose proof in \cite{AS17} does not generalise to infinite-dimensional target manifolds.
\end{rem}

\subsection*{Immersions between manifolds of mappings}\label{ec-imm}
\addcontentsline{toc}{subsection}{Immersions between manifolds of mappings}
\begin{la}\label{triv-case2}
Let $X$ and~$Z$ be locally convex spaces, $Y$ be a Banach space,
$U\subseteq X$ be a locally convex subset with dense interior
and $f\colon U\times Y\to Z$ be a $C^\ell$-map
such that $f_x:=f(x,\cdot)\colon Y\to Z$ is linear for each
$x\in U$. If $Y$ is infinite-dimensional and $\ell=0$, assume that~$f$ is~$C^1$.
If $x_0\in U$ such that $f_{x_0}$ admits a continuous linear left inverse,
then also
\[
(f|_{U_0})_\star\colon C^\ell(U_0,Y)\to C^\ell(U_0,Z),\;\, \gamma\mto f\circ (\id_{U_0},\gamma)
\]
admits a continuous linear left inverse, for an open $x_0$-neighborhood
$U_0\subseteq U$.
\end{la}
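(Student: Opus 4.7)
My plan is to mirror the structure of the proof of Lemma \ref{triv-case}, but replace the construction of a right inverse to an isomorphism $f_{x_0}|_E$ by a construction of a \emph{family} of left inverses to $f_x$, built by perturbing a given left inverse $\lambda_0$ of $f_{x_0}$.

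First, pick a continuous linear left inverse $\lambda_0 \colon Z \to Y$ of $f_{x_0}$, so $\lambda_0 \circ f_{x_0} = \id_Y$. Form the $C^\ell$-map $g \colon U \times Y \to Y$, $g(x,y) \coloneq \lambda_0(f(x,y))$, which is again $C^1$ in the case $\ell=0$ with $Y$ infinite-dimensional. Each $g_x \coloneq g(x,\cdot)$ is continuous linear and $g_{x_0} = \id_Y$.

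Next, I would argue as in the proof of Lemma \ref{triv-case} that
\[
g^\vee \colon U \to \cL(Y,Y)_b,\quad x \mapsto g_x
\]
is continuous: if $g$ is $C^1$ this follows from \cite[Lemma~1.5.9]{GaN}, while if $\ell=0$ and $Y$ is finite-dimensional then $\cL(Y,Y)_b = \cL(Y,Y)_c$, so continuity is supplied by \cite[Proposition~A.5.17]{GaN}. Since $Y$ is a Banach space, $\GL(Y)$ is open in $\cL(Y,Y)_b$ and the inversion map $\iota \colon \GL(Y) \to \cL(Y,Y)_b$ is smooth. Hence there exists an open $x_0$-neighborhood $U_0 \subseteq U$ with $g^\vee(U_0) \subseteq \GL(Y)$, and the composition $\iota \circ g^\vee|_{U_0} \colon U_0 \to \cL(Y,Y)_b$ is continuous.

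Now define $\lambda \colon U_0 \times Z \to Y$ by $\lambda(x,z) \coloneq (g_x)^{-1}(\lambda_0(z))$. Using continuity of the evaluation map $\cL(Y,Y)_b \times Y \to Y$ and the smoothness of $\iota$, together with \cite[Lemma~2.3]{DiK}, one checks that $\lambda$ is $C^\ell$. By \cite[Proposition~1.7.12]{GaN}, the induced push-forward
\[
\lambda_\star \colon C^\ell(U_0,Z) \to C^\ell(U_0,Y),\quad \gamma \mapsto \lambda \circ (\id_{U_0},\gamma)
\]
is therefore continuous linear. Finally, for $\gamma \in C^\ell(U_0,Y)$ and $x\in U_0$,
\[
\lambda_\star((f|_{U_0})_\star(\gamma))(x) = (g_x)^{-1}\bigl(\lambda_0(f(x,\gamma(x)))\bigr) = (g_x)^{-1}(g_x(\gamma(x))) = \gamma(x),
\]
so $\lambda_\star$ is a continuous linear left inverse of $(f|_{U_0})_\star$.

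The main obstacle, as in Lemma \ref{triv-case}, is justifying that a pointwise splitting can be upgraded to a splitting of function spaces; the Banach hypothesis on $Y$ is essential precisely because it ensures that $\GL(Y)$ is open in $\cL(Y,Y)_b$ and that inversion is smooth there, which is what allows the perturbation argument to work in a whole neighborhood of $x_0$ with the required $C^\ell$-dependence on the parameter.
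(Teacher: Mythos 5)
Your proposal is correct and follows essentially the same route as the paper's proof: compose $f$ with a fixed left inverse $\lambda_0$ of $f_{x_0}$, use openness of $\GL(Y)$ in $\cL(Y)_b$ and continuity of inversion to invert $\lambda_0\circ f_x$ on a neighborhood $U_0$, establish $C^\ell$-dependence of the resulting family via \cite[Lemma~2.3]{DiK}, and push forward. The only (cosmetic) difference is that you apply the continuity argument directly to $x\mapsto \lambda_0\circ f_x$ rather than first to $f^\vee$ and then post-composing with $\cL(Y,\lambda_0)$.
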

\begin{proof}
Let $\lambda\colon Z\to Y$ be a continuous linear map such that $\lambda\circ f_{x_0}=\id_Y$.
Then $\lambda_*:=\cL(Y,\lambda)\colon \cL(Y,Z)_b\to\cL(Y)$, $S\mto\lambda\circ S$
is a continuous linear map.
As in the proof of Lemma~\ref{triv-case}, we see that
$f^\vee\colon U\to\cL(Y,Z)_b$ is continuous, whence also
\[
h:=\lambda_*\circ f^\vee \colon U\to\cL(Y)_b,\;\, x\mto \lambda\circ f_x
\]
is continuous.
Since $\GL(Y)$ is open in $\cL(Y)_b$, there exists an open $x_0$-neighborhood
$U_0\subseteq U$ such that $h(U_0)\subseteq\GL(Y)$. As the inversion map
$\iota\colon \GL(Y)\to\GL(Y)$ is continuous,
we deduce that the map
\[
g\colon U_0\to \cL(Y)_b,\;\, x\mto (\lambda\circ f_x)^{-1}
\]
is continuous and hence also the map $g^\wedge\colon U_0\times Y\to Y$,
$(x,z)\mto g(x)(z)$ (as the evaluation map $\cL(Y)_b\times Y\to Y$ is continuous).
As $\lambda\circ f|_{U_0\times Y}$ is~$C^\ell$, we deduce that
$g^\wedge$ is~$C^\ell$ (see \cite[Lemma~2.3]{DiK}; cf.\ also \cite[Exercise 1.3.10]{GaN}, \cite[Theorem 5.3.1]{Ham}). Then also
\[
s\colon U_0\times Z\to Y,\quad s(x,z):=g^\wedge(x,\lambda(z))=(\lambda\circ f_x)^{-1}(\lambda(z))
\]
is~$C^\ell$, entailing that the linear map
\[
s_\star\colon C^\ell(U_0,Z)\to C^\ell(U_0,Y),\, \gamma\mto s\circ (\id_{U_0},\gamma)
\]
is continuous (see \cite[Proposition 1.7.12]{GaN}). It only remains to check that~$s_\star$ is a left inverse to
$(f|_{U_0})_\star$. But $s_\star((f|_{U_0})_*(\gamma))(x)=s(x,f(x,\gamma(x)))=(\lambda\circ f_x)^{-1}((\lambda\circ f_x)(\gamma(x)))=\gamma(x)$
for all $\gamma\in C^\ell(U_0,Y)$ and $x\in U_0$,
entailing that $s_\star((f|_{U_0})_\star(\gamma))=\gamma$
and thus $s_\star\circ (f|_{U_0})_\star=\id$.
\end{proof}
\begin{la}\label{lin-case2}
Let $M$ be a smooth manifold $($possibly with rough boundary$)$ which is smoothly paracompact, $\pi_E\colon E\to M$ be a $C^\ell$-vector bundle over~$M$ whose fibers are Banach spaces and $\pi_F\colon F\to M$
be a $C^\ell$-vector bundle over~$M$ whose fibers are locally convex spaces. Consider a bundle map
$f\colon E\to F$ of class~$C^\ell$
such that $f|_{E_x}\colon E_x\to F_x$ has a continuous linear left inverse,
for each $x\in M$. If $\ell=0$ and $E$ is not a finite rank bundle, assume, moreover, that~$E$, $F$, and~$f$ are~$C^1$. Then also
\[
f_* \colon \Gamma_{C^\ell}(E)\to\Gamma_{C^\ell}(F),\;\,
\sigma\mto f\circ\sigma
\]
has a continuous linear left inverse.
\end{la}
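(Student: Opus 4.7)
The plan is to mirror the proof of Lemma~\ref{lin-case} almost verbatim, but replacing the use of Lemma~\ref{triv-case} (right inverses) by Lemma~\ref{triv-case2} (left inverses). Thus I would first fix $x\in M$ and choose an open neighbourhood $U_x\subseteq M$ on which both $E|_{U_x}$ and $F|_{U_x}$ admit trivializations. Using these trivializations, the bundle map $f|_{E|_{U_x}}$ takes the form of a family of fiberwise linear maps as in Lemma~\ref{triv-case2}, and by hypothesis the fiber at $x$ has a continuous linear left inverse. Lemma~\ref{triv-case2}, applied in the chart, then produces a continuous linear left inverse
\[
\lambda_x\colon \Gamma_{C^\ell}(F|_{U_x})\to \Gamma_{C^\ell}(E|_{U_x})
\]
for $(f|_{E|_{U_x}})_*$, possibly after shrinking~$U_x$.

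Next, using that $M$ is smoothly paracompact, pick a smooth partition of unity $(h_x)_{x\in M}$ with $\Supp(h_x)\subseteq U_x$. Define $\lambda\colon \Gamma_{C^\ell}(F)\to\Gamma_{C^\ell}(E)$ by
\[
\lambda(\sigma)\coloneq \sum_{x\in M}\bigl(h_x\cdot \lambda_x(\sigma|_{U_x})\bigr)\widetilde{\;},
\]
with $\widetilde{\;}$ denoting extension by $0$ off~$U_x$ (well defined since $h_x$ vanishes outside $\Supp(h_x)\subseteq U_x$). To verify that $\lambda\circ f_*=\id$, observe that for any $\sigma\in\Gamma_{C^\ell}(E)$ the restriction $(f_*\sigma)|_{U_x}$ equals $(f|_{E|_{U_x}})_*(\sigma|_{U_x})$, which is mapped to $\sigma|_{U_x}$ by~$\lambda_x$. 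Hence
\[
\lambda(f_*\sigma)=\sum_{x\in M}\bigl(h_x\cdot \sigma|_{U_x}\bigr)\widetilde{\;}=\Bigl(\sum_{x\in M} h_x\Bigr)\cdot \sigma=\sigma,
\]
using that $\sum_{x\in M}h_x=1$.

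Continuity of $\lambda$ is argued exactly as in Lemma~\ref{lin-case}: around every point of~$M$ there is an open neighbourhood $V$ meeting only finitely many of the supports $\Supp(h_y)$, so on~$V$ the defining sum is finite and each summand is continuous as a composition of the continuous maps $\sigma\mto \sigma|_{U_y}$, $\lambda_y$, and multiplication by the smooth function~$h_y$. By Lemma~\ref{top-cover} the topology on $\Gamma_{C^\ell}(E)$ is initial with respect to the restriction maps to such a cover, hence $\lambda$ is continuous. I expect the only mildly delicate point to be bookkeeping the extension-by-zero and the local finiteness argument; the algebraic verification that we have a left inverse is straightforward once Lemma~\ref{triv-case2} has been invoked in the trivializing charts.
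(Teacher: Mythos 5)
Your proposal is correct and follows essentially the same route as the paper's proof: trivialize both bundles locally, invoke Lemma~\ref{triv-case2} to obtain local left inverses $\lambda_x$ after shrinking, glue them with a smooth partition of unity, and verify the left-inverse identity pointwise using $\sum_x h_x=1$, with continuity established via local finiteness and Lemma~\ref{top-cover}. No gaps.
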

\begin{proof}
For each $x\in M$, there exists an open $x$-neighborhood
$U_x\subseteq M$ such that $E|_{U_x}$ and $F|_{U_x}$
are trivial. By Lemma~\ref{triv-case2},
the continuous linear map
\[
(f|_{E|_{U_x}})_*\colon \Gamma_{C^\ell}(E|_{U_x})\to \Gamma_{C^\ell}(F|_{U_x})
\]
admits a continuous linear left inverse $\lambda_x\colon \Gamma_{C^\ell}(F|_{U_x})\to
\Gamma_{C^\ell}(E|_{U_x})$,
possibly after shrinking the open $x$-neighborhood $U_x\subseteq M$.
Let $(h_x)_{x\in M}$ be a smooth partition of unity on~$M$
such that $\Supp(h_x)\subseteq U_x$ for each $x\in M$.
As in the proof of Lemma~\ref{lin-case}, we see that
\[
\Lambda(\sigma):=\sum_{x\in M} (h_x\cdot \lambda_x(\sigma|_{U_x}))\widetilde{\;}
\]
defines a continuous linear map $\Lambda \colon \Gamma_{C^\ell}(F)\to\Gamma_{C^\ell}(E)$.
It remains to check that $\Lambda$ is a left inverse for $f_*$.
Now, given $\tau\in \Gamma_{C^\ell}(E)$,
we have $\lambda_x(f_*(\tau)|_{U_x})=\lambda_x((f\circ\tau)|_{U_x})=\lambda_x((f|_{E|_{U_x}})_*(\tau|_{U_x}))=\tau|_{U_x}$ for all $x\in M$ and hence
\[
\Lambda(f_*(\tau))=\sum_{x\in M} (h_x\cdot \lambda_x(f_*(\tau)|_{U_x}))\widetilde{\;}
=\sum_{x\in M} (h_x\cdot \tau|_{U_x})\widetilde{\;}=\tau,
\]
which completes the proof.
\end{proof}
We now deduce Theorem E (b), which we repeat for the reader's convenience

\begin{numba}[Theorem E (b)]
\emph{Let $f \colon M \rightarrow N$ be an $C^{\ell +k}$-immersion with $\ell\in\N_0$ and
$2\leq k\in \N\cup\{\infty\}$, where $M$ is a Banach manifold, and $\ell \geq 1$ if some modelling space of $M$ is infinite-dimensional. Then $C^\ell (K,f)$ is a $C^k$-immersion.}
\end{numba}
\begin{proof}[Proof of Theorem E\,(b)]
For each $\gamma\in C^\ell(K,M)$, the
map $g\colon \gamma^*(TM)\to (f\circ \gamma)^*(TN)$
taking $v\in \gamma^*(TM)_x=T_{\gamma(x)}(M)$ to
\[
T_{\gamma(x)}f(v)\in T_{f(\gamma(x))}N=(f\circ\gamma)^*(TN)_x
\]
is $C^\ell$ 
and linear in $v\in \gamma^*(TM)_x$ (which can be verified as in the proof of Theorem~E\,(a)).
Moreover, $g(x,\cdot)$ corresponds to $T_{\gamma(x)}f$ for $x\in K$,
(cf.\ (\ref{nearly}) in Corollary~\ref{formtang}),
whence it has a continuous linear left inverse. Now
\[
T_\gamma \, C^\ell(K,f)=g_*\colon \Gamma_{C^\ell}(\gamma^*(TM))\to \Gamma_{C^\ell}((f\circ\gamma)^*(TN))
\]
has a continuous linear left inverse, by Lemma~\ref{lin-case2}.
The $C^k$-map $C^\ell(K,f)$ therefore is a na\"{\i}ve immersion in the sense of~\cite{SUB}.
As $\Gamma_{C^\ell}((f\circ\gamma)^*(TM))$
is a Banach space and $k\geq 2$, we deduce from \cite[Theorem~H]{SUB}
that $C^\ell(K,f)$ is an immersion.
\end{proof}

\subsection*{Local diffeomorphisms between manifolds of mappings}\label{sec-eta}  \addcontentsline{toc}{subsection}{Local diffeomorphisms between manifolds of mappings}
Let us turn to local diffeomorphisms between manifolds of mappings. It turns out that this property can be established immediately using some topological data.

\begin{numba}[Theorem E\, (c)]\label{proofDc}\emph{
 If $f \colon M \rightarrow N$ is a local $C^{k+\ell}$ diffeomorphism and $M$ is a regular topological space, then $C^\ell (K,f)$ is a local $C^k$-diffeomorphism.}
\end{numba}
\begin{proof}[Proof of Theorem E (c)]
Let $\gamma\in C^\ell(K,M)$. For each $y\in M$,
there exists an open $y$-neighborhood $U_y\subseteq M$ such that
$f(U_y)$ is open in~$N$ and $f|_{U_y}\colon U_y\to f(U_y)$ is a $C^{k+\ell}$-diffeomorphism.
By \cite[Lemma~2.1]{DGS},
there exists $n\in\N$ and open subsets $V_1,\ldots, V_n\subseteq M$
such that $\gamma(K)\subseteq V_1\cup\cdots\cup V_n$ and
\[
(\forall i,j\in\{1,\ldots, n\})\;\,
V_i\cap V_j\not=\emptyset \impl (\exists y\in M)\; V_i\cup V_j \subseteq U_y.
\]
Each $x\in K$ has an open neighborhood $W_x\subseteq K$ whose closure
$L_x:=\wb{W_x}$ is contained in $\gamma^{-1}(V_{i(x)})$ for some $i(x)\in\{1,\ldots, n\}$.
Since $K$ is compact, there exist $m\in\N$ and $x_1,\ldots, x_m\in K$
such that $K=W_{x_1}\cup\cdots\cup W_{x_m}$.
Then
\[
P:=\{\eta\in C^\ell(K,M)\colon (\forall k\in\{1,\ldots,m\})\; \eta(L_{x_k})\subseteq V_{i(x_k)}\}
\]
is an open neighborhood of~$\gamma$ in $C^\ell(K,M)$. The restriction $C^\ell(K,f)|_P$
is injective, as we now verify:
Let $\eta_1,\eta_2\in P$ such that $f\circ \eta_1=f\circ \eta_2$.
Given $x\in K$, we have $x\in L_{x_k}$ for some $k\in \{1,\ldots, m\}$.
Since $\eta_1(x),\eta_2(x)\in V_{i(x_k)}$
and $f|_{V_{i(x_k)}}$ is injective, we deduce from $f(\eta_1(x))=f(\eta_2(x))$
that $\eta_1(x)=\eta_2(x)$. Thus $\eta_1=\eta_2$.\\[2.3mm]
The set $\Omega:=\bigcup_{k=1}^m (W_{x_k}\times f(V_{i(x_k)}))$
is open in $K\times N$ and contains the graph of $f\circ\gamma$.
Moreover,
\[
Q:=\{\theta\in C^\ell(K,N)\colon (\forall k\in\{1,\ldots,m\})\; \theta(L_{x_k})\subseteq f(V_{i(x_k)})\}
\]
is an open neighborhood of $f\circ\gamma$ in $C^\ell(K,N)$.
We claim that the map
\[
g\colon \Omega\to M,\;\, (x,z)\mto (f|_{V_{i(x_k)}})^{-1}(z)\;\,\mbox{if $x\in L_{x_k}$
and $z\in f(V_{i(x_k)})$}
\]
is well defined. If this is true, we observe that~$g$ is $C^{k+\ell}$ on each of the sets
$W_{x_k}\times f(V_{i(x_k)})$ which form an open cover for~$\Omega$,
whence $g$ is~$C^{k+\ell}$. As a consequence, the map
\[
g_\star\colon Q \to C^\ell(K,M),\;\, \theta\mto g\circ (\id_K,\theta)
\]
is~$C^k$. By construction, $g_\star(Q)\subseteq P$ and
\begin{equation}\label{thusbij}
C^\ell(K,f)|_P\circ g_\star=\id_Q,
\end{equation}
whence $C^\ell(K,f)(P)\supseteq Q$ and thus $C^\ell(K,f)(P)=Q$
(the converse inclusion being obvious). As $C^\ell(K,f)|_P$ is injective,
we see that $C^\ell(K,f)|_P\colon P\to Q$
is a bijection. We now infer from~(\ref{thusbij})
that $(C^\ell(K,f)|_P)^{-1}=g_*$, which is~$C^k$.
Thus $C^\ell(K,f)|_P\colon P\to Q$ is a $C^k$-diffeomorphism.\\[2.3mm]
It only remains to verify the claim. If $k,h\in\{1,\ldots, m\}$
such that $x\in L_{x_k}\cap L_{x_h}$
and $z\in f(V_{i(x_k)})\cap f(V_{i(x_h)})$,
then $\gamma(x)\in V_{i(x_k)}\cap V_{i(x_h)}$,
whence there exists $y\in Y$ such that
\[
V_{i(x_k)}\cup V_{i(x_h)}\subseteq U_y.
\]
Thus
$(f|_{V_{i(x_k)}})^{-1}(z)=(f|_{U_y})^{-1}(z)=(f|_{V_{i(x_h)}})^{-1}(z)$.
\end{proof}
We mention a variant of Theorem~E (c)
for spaces of continuous mappings between topological spaces.
\begin{prop}\label{map-eta}
Let $K$ be a compact Hausdorff topological space, $M$ and $N$ be Hausdorff
topological spaces and $f\colon M\to N$ be a local homeomorphism.
If $M$ is a regular topological space, then
\[
C(K,f)\colon C(K,M)\to C(K,N),\;\, \gamma\mto f\circ\gamma
\]
is a local homeomorphism.
\end{prop}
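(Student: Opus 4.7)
The plan is to follow the proof of Theorem~E\,(c) almost verbatim, substituting continuity for $C^{k+\ell}$-differentiability throughout and replacing the compact-open $C^\ell$-topology by the ordinary compact-open topology. First I would fix $\gamma\in C(K,M)$ and use the local homeomorphism property to select, for each $y\in M$, an open neighborhood $U_y\subseteq M$ such that $f(U_y)$ is open in~$N$ and $f|_{U_y}\colon U_y\to f(U_y)$ is a homeomorphism. Since $M$ is regular and $\gamma(K)$ is compact, \cite[Lemma~2.1]{DGS} produces finitely many open sets $V_1,\ldots,V_n\subseteq M$ covering $\gamma(K)$ with the property that $V_i\cap V_j\ne\emptyset$ implies $V_i\cup V_j\subseteq U_y$ for some~$y\in M$. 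Since $K$ is compact Hausdorff, hence regular, a standard shrinking argument yields finitely many $x_1,\ldots,x_m\in K$ and open neighborhoods $W_{x_k}\subseteq K$ whose closures $L_{x_k}$ satisfy $L_{x_k}\subseteq\gamma^{-1}(V_{i(x_k)})$ with $K=\bigcup_k W_{x_k}$.

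Next I would introduce the open neighborhoods $P:=\{\eta\in C(K,M):\eta(L_{x_k})\subseteq V_{i(x_k)}\text{ for all }k\}$ of $\gamma$ and $Q:=\{\theta\in C(K,N):\theta(L_{x_k})\subseteq f(V_{i(x_k)})\text{ for all }k\}$ of $f\circ\gamma$; both are open in the compact-open topology because the $L_{x_k}$ are compact. The injectivity of $C(K,f)|_P$ is verified exactly as in the proof of Theorem~E\,(c): any $x\in K$ lies in some $L_{x_k}$, on which $\eta_1(x),\eta_2(x)\in V_{i(x_k)}$ are mapped injectively by $f|_{V_{i(x_k)}}$.

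Then, setting $\Omega:=\bigcup_k(W_{x_k}\times f(V_{i(x_k)}))$, I would define $g\colon\Omega\to M$ by $g(x,z):=(f|_{V_{i(x_k)}})^{-1}(z)$ whenever $x\in L_{x_k}$ and $z\in f(V_{i(x_k)})$; well-definedness follows from the compatibility of the $V_i$ coming from the \cite{DGS}-lemma, via the same calculation as in Theorem~E\,(c). The map $g$ is continuous on each member of the open cover $\{W_{x_k}\times f(V_{i(x_k)})\}_k$ of $\Omega$, hence continuous on $\Omega$. It remains to observe that $g_\star\colon Q\to C(K,M)$, $\theta\mto g\circ(\id_K,\theta)$ is continuous for the compact-open topology; this is the continuous analogue of Proposition~\ref{fstar-gen} and is a standard consequence of the exponential law (valid since~$K$ is compact Hausdorff and hence locally compact). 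The pointwise identities $C(K,f)|_P\circ g_\star=\id_Q$ together with $g_\star(Q)\subseteq P$ and injectivity of $C(K,f)|_P$ then show that $C(K,f)|_P\colon P\to Q$ is a bijection with continuous inverse $g_\star$, and hence a homeomorphism.

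The only genuine obstacle is this last bookkeeping step: justifying continuity of $g_\star$ in the compact-open topology purely from continuity of~$g$. In the manifold setting this was handled by Proposition~\ref{fstar-gen}, which relies on canonical manifold structures; in the topological setting one has to fall back either on a direct argument with sub-basic sets of the compact-open topology or on the classical exponential law $C(K\times N,M)\cong C(N,C(K,M))$ for locally compact Hausdorff~$K$. Everything else is essentially a line-by-line transcription of the proof of Theorem~E\,(c).
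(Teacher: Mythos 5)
Your proposal is correct and coincides with the paper's own proof, which is given precisely as ``the proof of Theorem~E\,(c) with $C^\ell$ replaced by $C^0$ and diffeomorphisms replaced by homeomorphisms throughout.'' You even identify and correctly resolve the one point the paper leaves implicit, namely that the continuity of $g_\star$ for the compact-open topology must now be justified via the exponential law for the locally compact Hausdorff space~$K$ rather than via Proposition~\ref{fstar-gen}.
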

\begin{proof}
The proof is analogous to that of \ref{proofDc},
except that $C^\ell(K,M)$ and $C^\ell(K,N)$
have to be replaced with $C(K,M)$ and $C(K,N)$, respectively. Furthermore, 
the words ``$C^k$-diffeomorphism'' and ``$C^{k+\ell}$-diffeomorphism''
have to be replaced with ``homeomorphism'',
and the properties ``$C^k$'' and ``$C^{k+\ell}$'' have to be replaced with
continuity.
\end{proof}
In light of the results in this section we can now adapt a classical result by Michor to our setting .

\begin{prop}\label{prop:emb}
Let $\iota \colon M \rightarrow N$ be a $C^{\ell+k}$ embedding between manifolds modelled on locally convex spaces with local addition, where $\ell,k \in \N_0 \cup \{\infty\}$ and we assume one of the following:
\begin{enumerate}
\item both $M$ and $N$ are finite dimensional (no restriction on $\ell,k$),
\item $M$ is a Banach manifold whose model space is infinite-dimensional and $1 \leq \ell < \infty$, $k\geq 2$.
\end{enumerate}
Then for every compact manifold $K$ (possibly with rough boundary), 
$$\iota_* \coloneq C^\ell (K,f) \colon C^\ell (K,M) \rightarrow C^\ell (K,N),\quad g \mapsto f\circ g$$
is a $C^k$-embedding.  
\end{prop}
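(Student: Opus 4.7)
The plan is to reduce to the situation in which $\iota$ is the inclusion of a split submanifold and then to identify $C^\ell(K,M)$ as a split submanifold of $C^\ell(K,N)$, treating the two hypotheses separately.

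\emph{Reduction.} Since $\iota$ is a $C^{\ell+k}$-embedding, $\iota(M)$ is a split submanifold of $N$ and $\iota|^{\iota(M)}\colon M\to\iota(M)$ is a $C^{\ell+k}$-diffeomorphism; in particular $\iota(M)$ inherits a local addition by transport of structure. Applying Corollary~\ref{la-reu} to both $\iota|^{\iota(M)}$ and its inverse, $C^\ell(K,\iota|^{\iota(M)})\colon C^\ell(K,M)\to C^\ell(K,\iota(M))$ is a $C^k$-diffeomorphism, so it suffices to show that the inclusion $C^\ell(K,\iota(M))\hookrightarrow C^\ell(K,N)$ is a $C^k$-embedding. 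Hence I assume throughout that $M\subseteq N$ is a split submanifold and $\iota$ is the inclusion.

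\emph{Case} (2). The stated hypotheses ($M$ Banach, $1\leq \ell <\infty$, $k\geq 2$) are precisely those of Theorem~E\,(b), which gives that $\iota_*$ is a $C^k$-immersion. Since $\ell\geq 1$, Lemma~\ref{embpfwd} furnishes the topological embedding property. Because $C^\ell(K,M)$ and $C^\ell(K,N)$ are manifolds without boundary (the model spaces are spaces of sections of pull-back bundles), the definition of $C^k$-embedding recalled in Section~\ref{sec:prelim} identifies $\iota_*$ as such.

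\emph{Case} (1). When $\ell=\infty$ or $k\leq 1$, Theorem~E\,(b) need not be applicable, so I would construct the split submanifold structure from a tubular neighborhood. Choosing a Riemannian metric on the finite-dimensional manifold $N$ adapted to the decomposition $TN|_M = TM\oplus \nu$ (with $\nu$ the normal bundle of $M$ in $N$) and restricting its exponential, one obtains a local addition $\Sigma\colon \Omega\subseteq TN\to N$ satisfying $\Sigma(\Omega\cap TM)\subseteq M$, with $\Sigma|_{\Omega\cap TM}$ a local addition on $M$, and $\Sigma(\sigma)\in M \Leftrightarrow \sigma\in TM$ for $\sigma\in\Omega$. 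By Proposition~\ref{prop: can:locadd} the map $\sigma\mapsto \Sigma\circ\sigma$ defines a chart of the canonical structure on $C^\ell(K,N)$ around each $\gamma_0\in C^\ell(K,M)$, defined on a $0$-neighborhood in
\[
\Gamma_{C^\ell}(\gamma_0^*TN) = \Gamma_{C^\ell}(\gamma_0^*TM)\oplus \Gamma_{C^\ell}(\gamma_0^*\nu)
\]
(a topological direct sum); the separation property shows that $C^\ell(K,M)$ is cut out in this chart precisely by the complemented closed subspace $\Gamma_{C^\ell}(\gamma_0^*TM)$. Hence $C^\ell(K,M)$ is a split submanifold of $C^\ell(K,N)$; its induced manifold structure is canonical by Lemma~\ref{base-cano}\,(c) and thus coincides with the given one by Lemma~\ref{base-cano}\,(b), so $\iota_*$ is a smooth (in particular $C^k$-) embedding.

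The main technical point is the construction of the adapted local addition in case~(1), especially verifying the bi-implication $\Sigma(\sigma)\in M \Leftrightarrow \sigma\in TM$ on a uniform $0$-neighborhood of the zero section of $TN|_M$ and the identification of the resulting chart of $C^\ell(K,N)$ with a chart of the canonical structure on $C^\ell(K,M)$. Both points are standard consequences of the tubular neighborhood theorem for finite-dimensional split submanifolds combined with the uniqueness of canonical manifold structures asserted in Lemma~\ref{base-cano}\,(b).
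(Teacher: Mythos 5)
Your proposal is correct and follows essentially the same route as the paper: case (2) via Theorem~E\,(b) combined with Lemma~\ref{embpfwd}, and case (1) by reducing to the inclusion of a split submanifold, showing the corestriction $C^\ell(K,M)\to C^\ell(K,\iota(M))$ is a $C^k$-diffeomorphism, and then exhibiting $C^\ell(K,\iota(M))$ as a split submanifold of $C^\ell(K,N)$ via an adapted local addition — which is exactly the argument of \cite[Proposition~10.8]{Mic} that the paper cites at this point. The only cosmetic difference is that you obtain the corestriction diffeomorphism from Corollary~\ref{la-reu} applied to $\iota|^{\iota(M)}$ and its inverse, whereas the paper invokes Theorem~E\,(c); both are valid.
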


\begin{proof}
We only have to establish the first case as the second case follows by combining Lemma \ref{embpfwd} and Theorem E (b) (cf.\ introduction). Now, due to Corollary \ref{la-reu} the push-forward $\iota_*$ is a $C^k$-map which is clearly injective with image $\iota_* (C^\ell (K,M)) = C^\ell (K,\iota(M))$ (as sets).
 Considering $\iota(M)$ as a submanifold of $N$, Theorem E\, (c) shows that $\iota_*$ corestricts to a $C^k$-diffeomorphism $\iota_*|^{C^\ell (K,\iota (M))} \colon C^\ell (K,M) \rightarrow C^\ell (K,\iota(M))$.
Arguing as in the proof of \cite[Proposition 10.8]{Mic}\footnote{In loc.cit. $\ell = \infty$ is assumed. However, the proof generalises verbatim to $\ell \in \N_0$ due to the canonical manifold structure on $C^\ell (K,M)$ and $C^\ell (K,N)$ from Appendix \ref{map-mfd}.} to prove that $C^\ell (K,\iota(M)) $ is a split submanifold of $C^\ell (K,N)$.
Then \cite[Lemma 1.13]{SUB} entails that $\iota_*$ is a $C^k$-embedding.
\end{proof}
\subsection*{Proper maps between manifolds of maps}\label{sec-pro}
  \addcontentsline{toc}{subsection}{Proper maps between manifolds of maps}
In this section, we investigate conditions under which the pushforward of a proper map yields a proper map between manifolds of mappings. To this end, we recall first:

\begin{defn}\label{defn: proper}
Consider a continuous map $f\colon X\to Y$
between Hausdorff topological spaces. Then $f$ is called
\begin{itemize}
\item[(a)] \emph{proper} if $f^{-1}(K)$ is a compact subset of~$X$
for each compact subset $K\subseteq Y$ (see \cite{Pal}).
\item[(b)] \emph{perfect} if~$f$ is a closed map and
$f^{-1}(\{y\})$ is a compact subset of~$X$ for each $y\in Y$
(see \cite[p.\,182]{Eng}).
\end{itemize}
\end{defn}
Every perfect map is proper (see \cite[Theorem 3.7.2]{Eng}).
If $Y$ is a $k$-space,\footnote{A Hausdorff topological space~$Y$
is called a \emph{$k$-space} if subsets $A\subseteq Y$ are closed
if and only if $A\cap K$ is closed for each compact subset $K\subseteq Y$.}
  then a continuous map $f\colon X\to Y$ is proper
if and only if it is perfect (as proper maps to $k$-spaces are closed
mappings, see~\cite{Pal}). Note that
every manifold (possibly with rough boundary) modelled
on a metrizable locally convex space is a $k$-space
(notably every Banach manifold).

In contrast to our findings concerning submersions and immersions,
the push-forward of proper maps will in general not be a proper map as the next example shows.

\begin{exa}\label{exa:nonproper}
Let $\{\star\}$ be the one-point manifold and consider the (smooth) map from the circle $f\colon \Sph \rightarrow \{\star\}$. Then $f$ is proper as $\Sph$ is compact. Now  $C^\ell (\Sph,\{\star\}) = \{\star\}$ and we observe that $f_* \colon C^\infty (\Sph , \Sph) \rightarrow \{\star\}$ cannot be proper as $C^\infty(\Sph , \Sph)$ is an infinite-dimensional manifold (hence non-compact). 
\end{exa}

However, properness of the push-forward is preserved under additional assumptions. 
Three lemmas will be useful for these discussions.

\begin{la}\label{hence-Ck}
Let $M$, $N$, and $L$ be $C^k$-manifolds $($possibly
with rough boundary$)$
and $q\colon M\to N$ be a local $C^k$-diffeomorphism.
If $f\colon L\to M$ is a continuous map such that $q\circ f$ is~$C^k$,
then~$f$ is~$C^k$.
\end{la}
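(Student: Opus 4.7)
The plan is to verify the $C^k$-property locally, exploiting the fact that $q$ is a \emph{local} $C^k$-diffeomorphism around each image point $f(x)$, and the fact that being $C^k$ is a local property (it can be tested on any open cover).

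First, I would fix an arbitrary point $x \in L$ and let $y \coloneq f(x) \in M$. Since $q$ is a local $C^k$-diffeomorphism, there exists an open neighborhood $U \subseteq M$ of $y$ such that $q(U)$ is open in $N$ and $q|_U \colon U \to q(U)$ is a $C^k$-diffeomorphism; in particular its inverse $(q|_U)^{-1} \colon q(U) \to U$ is a $C^k$-map. By continuity of $f$, the set $V \coloneq f^{-1}(U)$ is an open neighborhood of $x$ in $L$, and on $V$ we have the pointwise identity
\[
f|_V = (q|_U)^{-1} \circ (q\circ f)|_V.
\]
The map $(q\circ f)|_V \colon V \to q(U)$ is $C^k$ by assumption (restriction of a $C^k$-map to an open subset is $C^k$, with image landing in the open subset $q(U)$ of~$N$), and $(q|_U)^{-1}$ is $C^k$, so the composite $f|_V$ is $C^k$ as a composition of $C^k$-maps.

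Since $x$ was arbitrary, $L$ is covered by open sets on which $f$ restricts to a $C^k$-map, so $f$ itself is $C^k$ (being $C^k$ is checked chartwise, and charts can be chosen to lie inside these~$V$'s; equivalently, one may cite the general fact that if a map is $C^k$ on each member of an open cover of the domain, it is $C^k$). The only subtlety to note is that we do not need any assumption beyond continuity of~$f$ for $V = f^{-1}(U)$ to be open, and that the factorization $f|_V = (q|_U)^{-1} \circ (q\circ f)|_V$ holds pointwise precisely because $f(V) \subseteq U$ and $q|_U$ is injective; there is no real obstacle here, the result is essentially immediate from the definition of a local $C^k$-diffeomorphism.
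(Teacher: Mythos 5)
Your argument is correct and is essentially identical to the paper's proof: pick a neighborhood on which $q$ restricts to a $C^k$-diffeomorphism, pull it back along the continuous map $f$, and write $f$ locally as $(q|_U)^{-1}\circ (q\circ f)$. The paper phrases it with an arbitrary open $x$-neighborhood mapped into $U$ rather than the full preimage $f^{-1}(U)$, but this is an immaterial difference.
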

\begin{proof}
Given $x\in L$, let $V$ be an open neighborhood of~$f(x)$ in~$M$ such that
$q(V)$ is open in~$N$ and $q|_V\colon V\to q(V)$ is a $C^k$-diffeomorphism.
Let $U\subseteq L$ be an open $x$-neighborhood such that $f(U)\subseteq V$.
Then $f|_U=(q|_V)^{-1}\circ (q\circ f)|_U$ is~$C^k$.
\end{proof}
\begin{la}\label{equaliza}
Let $X$ be a connected topological space, $Y$ be a Hausdorff topological space
and $q\colon Y\to Z$ be a locally injective map to a set~$Z$.
Let $f\colon X\to Y$ and $g\colon X\to Y$ be continuous mappings such that
\[
q\circ f=q\circ g.
\]
If $f(x_0)=g(x_0)$ for some $x_0\in X$,
then $f=g$.
\end{la}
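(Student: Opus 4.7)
The plan is to run the standard clopen argument powered by the connectedness of $X$. Define the agreement set
\[
A \coloneq \{x\in X\colon f(x)=g(x)\}.
\]
By hypothesis $x_0\in A$, so $A$ is non-empty, and I will show that $A$ is both open and closed in~$X$; connectedness then forces $A=X$, which is the desired conclusion $f=g$.

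For closedness I would use that $Y$ is Hausdorff, so the diagonal $\Delta_Y\subseteq Y\times Y$ is closed. Since the pairing $(f,g)\colon X\to Y\times Y$ is continuous, $A=(f,g)^{-1}(\Delta_Y)$ is closed in~$X$. This step is routine.

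The key step is openness of~$A$, and this is the only place where the local injectivity of~$q$ is used. Given $x\in A$, let $y\coloneq f(x)=g(x)$. By local injectivity pick an open neighborhood $V\subseteq Y$ of~$y$ such that $q|_V$ is injective. By continuity of~$f$ and~$g$, the set $U\coloneq f^{-1}(V)\cap g^{-1}(V)$ is an open neighborhood of~$x$ in~$X$. For any $x'\in U$ we have $f(x'),g(x')\in V$ together with $q(f(x'))=q(g(x'))$ from the hypothesis $q\circ f=q\circ g$; injectivity of $q|_V$ then yields $f(x')=g(x')$, so $x'\in A$. Hence $U\subseteq A$, proving~$A$ is open.

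I do not expect any real obstacle here: the Hausdorff hypothesis on~$Y$ handles closedness, local injectivity of~$q$ handles openness, and connectedness of~$X$ concludes. No differentiability, no local-addition structure, and no properties of~$Z$ beyond being a set are needed — which matches the generality in which the lemma is stated, and makes it directly applicable in the sequel (e.g.\ to identity principles for local diffeomorphisms as provided by Theorem~E\,(c)).
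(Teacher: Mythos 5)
Your proof is correct and is essentially the same as the paper's: the same nonempty--closed--open decomposition of the agreement set, with Hausdorffness of $Y$ giving closedness, local injectivity of $q$ giving openness via a neighborhood $V$ on which $q$ is injective, and connectedness of $X$ concluding. Your explicit mention of the closed diagonal $\Delta_Y$ is just a slightly more detailed phrasing of the closedness step the paper states in one line.
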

\begin{proof}
The subset $E:=\{x\in X\colon f(x)=g(x)\}$ of~$X$ is nonempty by hypothesis
and closed as $Y$ is Hausdorff and both $f$ and $g$ are continuous.
If $x\in E$, let~$V$ be a neighborhood of $f(x)=g(x)$ in~$Y$ such that
$q|_V$ is injective. By continuity of~$f$ and~$g$,
there is an $x$-neighborhood $U\subseteq X$ such that $f(U)\subseteq V$ and
$g(U)\subseteq V$. For each $y\in U$, we deduce from $q|_V(f(y))=q|_V(g(y))$
that $f(y)=g(y)$, whence $U\subseteq E$ and~$E$ is open. As~$X$ is connected, $E=X$ follows.
\end{proof}
If $X$ is a set and $V\subseteq X\times X$ is a set containing the diagonal
$\Delta_X:=\{(x,x)\colon x\in X\}$, we set $V[x]:=\{y\in X\colon (x,y)\in V\}$.
\begin{la}\label{like-L-del}
Let $X$ be a topological space and $(U_j)_{j\in J}$
be an open cover of~$X$.
Let $(A_j)_{j\in J}$ be a locally finite cover of~$X$
by closed subsets~$A_j$ of~$X$ such that $A_j\subseteq U_j$ for each $j\in J$.
Then there exists a neighborhood~$V$ of the diagonal $\Delta_X$
in $X\times X$ such that $V[x]\subseteq U_j$
for all $j\in J$ and $x\in A_j$.
\end{la}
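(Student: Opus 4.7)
The plan is to build $V$ as a union of symmetric boxes $V := \bigcup_{z \in X} N_z \times N_z$, where for each $z \in X$ I choose an open neighborhood $N_z$ of $z$ small enough that the following \emph{key property} holds: whenever $x \in N_z$ lies in some $A_j$, one already has $z \in A_j$ and $N_z \subseteq U_j$. Once the $N_z$ have been produced, $V$ is open and contains $\Delta_X$, and the required conclusion is immediate: if $x \in A_j$ and $(x,y) \in V$, then $x, y \in N_z$ for some $z$, whence $N_z \subseteq U_j$ by the key property applied to $x$, so $y \in U_j$ as required.

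To construct $N_z$, I would use local finiteness of $(A_j)_{j \in J}$ to pick an open neighborhood $W_z$ of $z$ for which $G(z) := \{j \in J : W_z \cap A_j \neq \emptyset\}$ is finite. Writing $F(z) := \{j \in J : z \in A_j\} \subseteq G(z)$, I would then set
$$N_z := W_z \cap \bigcap_{j \in F(z)} U_j \cap \bigcap_{j \in G(z) \setminus F(z)} (X \setminus A_j),$$
which is open (a finite intersection of open sets, using that each $A_j$ is closed) and contains $z$, because $z \in A_j \subseteq U_j$ for $j \in F(z)$ and $z \notin A_j$ for $j \in G(z) \setminus F(z)$.

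Verifying the key property is then straightforward: if $x \in N_z \cap A_j$, then $W_z \cap A_j \neq \emptyset$ forces $j \in G(z)$; and $j \in G(z) \setminus F(z)$ is impossible, since $N_z$ is disjoint from $A_j$ for such $j$ by construction. Hence $j \in F(z)$, which yields $z \in A_j$ and $N_z \subseteq U_j$ directly from the definition of $N_z$.

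The main thing to watch out for is that the obvious thinner candidate $\bigcup_z \{z\} \times N_z$ need not be a neighborhood of $\Delta_X$; symmetrising to $N_z \times N_z$ fixes this, and the construction still works because the key property was formulated so as to control $N_z$ for \emph{any} witness $z$ with $x \in N_z$, not merely for $z = x$.
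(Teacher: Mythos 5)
Your proof is correct. It runs on the same mechanism as the paper's — local finiteness is used at each point $z$ to intersect the finitely many relevant $U_j$ with $z\in A_j$ and to excise the nearby $A_j$ with $z\notin A_j$ — but packages $V$ dually: you take a union of symmetric open boxes $N_z\times N_z$, so that $V$ is automatically an open neighborhood of $\Delta_X$ and the slice condition $V[x]\subseteq U_j$ is what must be verified (your ``key property''), whereas the paper takes the union of vertical slices $\{x\}\times\bigcap_{j\colon x\in A_j}U_j$, so that the slice condition holds by definition and the work goes into showing $V$ is a neighborhood of the diagonal. A minor bonus of your version is that you only ever form finite intersections of open sets, while the paper's argument additionally invokes the fact that the union of a locally finite family of closed sets is closed.
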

\begin{proof}
We set
\[
V:=\bigcup_{x\in X}\left(\{x\}\times \bigcap_{j\in J : x\in A_j}U_j \right).
\]
To see that~$V$ is a neighborhood of~$\Delta_X$, let $x\in X$.
Since $(A_j)_{j\in J}$ is locally finite, the union
\[
\bigcup_{j\in J\colon x\not\in A_j}A_j
\]
of closed sets is closed. Thus
\[
W:=X\setminus \bigcup_{j\in J\colon x\not\in A_j}A_j=\bigcap_{j\in J: x\not\in A_j} (X\setminus A_j)
\]
is an open neighborhood of~$x$ in~$X$.
If $w\in W$, then
$\{j\in J \colon w\in A_j\}\subseteq \{j\in J\colon x\in A_j\}$,
whence $V$ contains the $(x,x)$-neighborhood
\[
W\times \bigcap_{j\in J: x \in A_j}U_j.
\]
Thus~$V$ is a neighborhood of~$\Delta_X$ in $X\times X$. It remains to observe
that $V[x]\subseteq U_j$
for each $j\in J$ such that $x\in A_j$, by definition of~$V$.
\end{proof}
\begin{prop}\label{ess-prop}
Let $S$ be a Hausdorff topological space which is a $k$-space
and admits a cover $(K_i)_{i\in I}$
of compact, locally connected subsets~$K_i$ such that each compact subset $K\subseteq S$
is contained in $\bigcup_{i\in\Phi}K_i$ for some finite subset $\Phi\subseteq I$. 
Let
$X$, $Y$, and~$Z$ be Hausdorff topological spaces,
$\alpha\colon X\to Y$ be a local homeomorphism
and $\beta\colon X\to Z$ be a continuous map such that
$(\alpha,\beta)\colon X\to Y\times Z$ is a proper map.
We assume that the topological space~$X$ is regular.
Consider the local homeomorphism
\[
\alpha_*:=C(S,\alpha) \colon C(S,X)\to C(S,Y),\;\, \gamma\mto \alpha\circ\gamma
\]
and the continuous map $\beta_*:=C(S,\beta)\colon C(S,X)\to C(S,Z)$, $\gamma\mto \beta\circ \gamma$.
Then
$g:=(\alpha_*,\beta_*)\colon C(S,X)\to C(S,Y)\times C(S,Z)$,
$\gamma\mto (\alpha\circ\gamma,\beta\circ \gamma)$
is proper.
\end{prop}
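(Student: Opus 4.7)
My plan is to show $g$ is proper by verifying $g^{-1}(C)$ is compact for every compact $C \subseteq C(S,Y) \times C(S,Z)$, via a net-extraction argument. The three ingredients are: pointwise precompactness supplied by properness of $(\alpha,\beta)$; a local lifting step exploiting that $\alpha$ is a local homeomorphism; and a globalization step via the $k$-space structure of $S$ together with the cover $(K_i)_{i \in I}$.

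Let $(\gamma_\lambda) \subseteq g^{-1}(C)$. By compactness of $C$, pass to a subnet with $g(\gamma_\lambda) = (\mu_\lambda, \nu_\lambda) \to (\mu, \nu) \in C$. For each $s \in S$, properness of $(\alpha, \beta)$ forces $\gamma_\lambda(s)$ eventually into a compact subset of $X$, and a Tychonoff (universal-subnet) argument produces a sub-subnet $\gamma_{\lambda'}$ converging pointwise to some $\gamma_0 \colon S \to X$ satisfying $(\alpha, \beta)(\gamma_0(s)) = (\mu(s), \nu(s))$. The fiber $F_s := (\alpha, \beta)^{-1}(\{(\mu(s), \nu(s))\})$ is finite, being compact by properness and contained in the discrete set $\alpha^{-1}(\mu(s))$ since $\alpha$ is a local homeomorphism; so $\gamma_0$ takes values in finite sets.

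For the local lifting at $s_0 \in K_{i_0}$, enumerate $F_{s_0} = \{x_1, \ldots, x_k\}$ with $\gamma_0(s_0) = x_{j_0}$. Hausdorffness of $X$ plus $\alpha$ being a local homeomorphism yields pairwise disjoint open $V_j \ni x_j$ with $\alpha|_{V_j}$ a homeomorphism onto a common open $W_0 \subseteq Y$, with inverses $\psi_j := (\alpha|_{V_j})^{-1}$. A tube argument from properness of $(\alpha, \beta)$ supplies an open $W' \ni (\mu(s_0), \nu(s_0))$ in $Y \times Z$ with $(\alpha, \beta)^{-1}(W') \subseteq \bigsqcup_j V_j$. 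By local connectedness of $K_{i_0}$ and continuity of $\mu, \nu$, pick a connected open $U \subseteq K_{i_0}$ containing $s_0$ with $(\mu, \nu)(\overline{U}) \subseteq W'$; uniform convergence of $(\mu_{\lambda'}, \nu_{\lambda'})$ on the compact set $\overline{U}$ then forces $\gamma_{\lambda'}(\overline{U}) \subseteq \bigsqcup_j V_j$ for $\lambda'$ large. Continuity of $\gamma_{\lambda'}$ plus connectedness of $U$ traps $\gamma_{\lambda'}(U)$ inside a single $V_j$, which pointwise convergence at $s_0$ pins down as $V_{j_0}$ eventually. Hence $\gamma_{\lambda'}|_U = \psi_{j_0} \circ \mu_{\lambda'}|_U \to \psi_{j_0} \circ \mu|_U = \gamma_0|_U$ uniformly on $U$, yielding local continuity of $\gamma_0$ together with local uniform convergence (and implicitly realizing the uniqueness-of-lifts principle of Lemma~\ref{equaliza}).

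To globalize, $\gamma_0$ is continuous on each $K_i$ (covered by finitely many of the above $U$'s by compactness) and hence on all of $S$ since $S$ is a $k$-space. For compact-open convergence $\gamma_{\lambda'} \to \gamma_0$ in $C(S, X)$, any compact $L \subseteq S$ sits inside $\bigcup_{i \in \Phi} K_i$ for some finite $\Phi$, and each $K_i$ is covered by finitely many of our connected $U$'s, so uniform convergence on $L$ follows. Thus $\gamma_{\lambda'} \to \gamma_0 \in g^{-1}(C)$ in $C(S, X)$, proving $g^{-1}(C)$ compact. The most delicate step is the tube argument $(\alpha, \beta)^{-1}(W') \subseteq \bigsqcup_j V_j$: for a proper map into a general Hausdorff target this amounts to a closed-map property, and making it run with only the assumed hypotheses (regularity of $X$ and finiteness of $F_{s_0}$) is where the bulk of the technical care is required.
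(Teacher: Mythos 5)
Your route is genuinely different from the paper's: you extract a pointwise-convergent universal subnet and upgrade it to compact-open convergence via a local lifting argument (finiteness of the fibres of $(\alpha,\beta)$, disjoint sheets $V_j$, connectedness of small neighbourhoods in the $K_i$), whereas the paper shows $g^{-1}(L)$ lands in $C(K,B)$ for a compact $B$ and then proves equicontinuity (via Lemma~\ref{like-L-del} and the uniqueness-of-lifts Lemma~\ref{equaliza}) so that Ascoli's theorem applies, finishing with the same $k$-space/projective-limit globalization you use. Most of your steps are sound, including the finiteness of $F_{s_0}$ (compact by properness, discrete because $\alpha$ is a local homeomorphism) and the connectedness trapping argument.

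However, the tube step is a genuine gap as stated. You claim that properness of $(\alpha,\beta)$ supplies an open $W'\ni(\mu(s_0),\nu(s_0))$ in $Y\times Z$ with $(\alpha,\beta)^{-1}(W')\subseteq\bigsqcup_j V_j$. This is the assertion that $(\alpha,\beta)$ is closed at that point, and properness does \emph{not} imply closedness for maps into an arbitrary Hausdorff target: as the paper itself notes after Definition~\ref{defn: proper}, ``proper $\Rightarrow$ perfect'' is only available when the target is a $k$-space, and here $Y\times Z$ carries no such hypothesis. You flag this as ``where the bulk of the technical care is required'' but do not close it, and regularity of $X$ alone will not rescue the global tube claim. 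The repair is to never leave compact sets: fix a compact neighbourhood $\overline{U_1}$ of $s_0$ in $K_{i_0}$ (available since $K_{i_0}$ is compact Hausdorff), put $C'':=\ev(C\times\overline{U_1})\subseteq Y\times Z$ and $B'':=(\alpha,\beta)^{-1}(C'')$, which is compact by properness; the restriction $(\alpha,\beta)|_{B''}\colon B''\to C''$ is a continuous map from a compact space to a Hausdorff space, hence closed, so the tube lemma applies there and yields an open $W'\subseteq Y\times Z$ with $(\alpha,\beta)^{-1}(W')\cap B''\subseteq\bigsqcup_j V_j$. Since every value $\gamma_{\lambda'}(s)$ with $s\in\overline{U_1}$ lies in $B''$, this relativized tube suffices for the rest of your argument. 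This manoeuvre of confining everything to the compact sets generated by $L$ is precisely what the paper's proof does from the outset (with its sets $B$, $C'$, $D$), which is why it avoids the issue. With that repair (and reading your ``uniform convergence'' statements as convergence in the compact-open topology on the compact sets $\overline{U}$), your proof goes through.
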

\begin{rem}
(a) If $S$ is any finite-dimensional manifold (possibly with rough boundary)
which is locally compact (which is automatic if~$S$ has no boundary),
then~$S$ admits a cover $(K_i)_{i\in I}$ as described in
Proposition~\ref{ess-prop}.\medskip

\noindent
(b) Consider an ascending sequence $S_1\subseteq S_2\subseteq\cdots$
of finite-dimensional manifolds (possibly with rough boundary) which are locally compact,
such that each inclusion map $S_n\to S_{n+1}$ is a topological embedding.
Endow $S:=\bigcup_{n\in\N}S_n$ with the direct limit topology.
Then $S$ admits a cover $(K_i)_{i\in I}$ as in Proposition~\ref{ess-prop}
(composed of those of the $S_n$, as in~(a)).
For example, this applies to $S:=\bigoplus_{n\in\N}\R=\dl\,\R^n$ with $K_n:=[{-n},n]^n$
for~$n\in\N$.
\end{rem}
\begin{proof}[Proof of Proposition~\ref{ess-prop}]
We start with the special case that $K:=S$ is compact.
Let $L\subseteq C(K,Y)\times C(K,Z)\sim C(K,Y\times Z)$ be a compact set;
we have to show that $g^{-1}(L)$ is compact.
As the evaluation map $\ve\colon C(K,Y\times Z)\times K\to Y\times Z$
is continuous,
\[
\{\gamma(x)\colon \gamma\in L, x\in K\}=\ve(L\times K)
\]
is a compact subset of~$Y\times Z$. Let $C$ be a compact subset of $Y\times Z$
which contains $\ve(L\times K)$. Since $(\alpha,\beta)$ is proper,
$B:=(\alpha,\beta)^{-1}(C)$ is a compact subset of~$X$.
Then
\[
g^{-1}(L)\subseteq C(K,B);
\]
in fact, for $\theta\in g^{-1}(L)$ and $x\in K$
we have $g(\theta)=(\alpha\circ\theta,\beta\circ \theta)\in L$,
whence $(\alpha,\beta)(\theta(x))=g(\theta)(x)\in C$
and thus $\theta(x)\in B$.
Note that $C':=(\alpha,\beta)(B)$ is a compact subset of~$C$ and $B=(\alpha,\beta)^{-1}(C')$.
Also, $g^{-1}(L)=g^{-1}(L')$ using the compact set
$L':=\{\gamma\in L\colon (\forall x\in K)\;\gamma(x)\in C'\}$.
Finally, $\ve(L'\times K)\subseteq C'$.

In view of Ascoli's Theorem,
$g^{-1}(L)$ will be compact if we can show that $g^{-1}(L)\subseteq C(K,B)$
is equicontinuous (with respect to the unique uniform structure on
the compact Hausdorff space~$B$ which is compatible with its topology).
Let~$W$ be a neighborhood of~$\Delta_B$ in $B\times B$.
For each $b\in B$, we find an open $b$-neighborhood
$U_b$ in~$X$ such that $(U_b\times U_b)\cap (B\times B)\subseteq W$ holds,
$\alpha(U_b)$ is open in~$Y$, and $\alpha|_{U_b}\colon U_b\to\alpha(U_b)$
is a homeomorphism.
For $b\in B$, let $A_b$ be a compact neighborhood of~$b$ in~$B$
such that $A_b\subseteq U_b$. By compactness of~$B$, there exists a finite subset $I\subseteq B$
such that $B=\bigcup_{b\in I}A_b$.

Let $\pr_1\colon Y\times Z\to Y$, $(y,z)\mto y$ be the projection onto the first component.
Then $D:=\pr_1(C')=\pr_1((\alpha,\beta)(B))=\alpha(B)$ is a compact subset
of~$Y$, and $(D\cap \alpha(U_b))_{b\in I}$ is a finite open cover of the compact
topological space~$D$. Moreover, the compact sets $L_b:=\alpha(A_b)$
cover~$D$ for $b\in I$ and $L_b\subseteq D\cap \alpha(U_b)$ holds for all $b\in I$.

By Lemma~\ref{like-L-del}, there exists
a neighborhood~$V$ of~$\Delta_D$ in $D\times D$ such that $V[z]\subseteq \alpha(U_b)\cap D$
for each $z\in D$ and each $b\in I$ such that $z\in L_b$.
Since $\{\pr_1\circ \,\gamma\colon\gamma\in L'\}=C(K,\pr_1)(L')\subseteq C(K,D)$ is compact and hence 
equicontinuous, each $x\in K$ has a neighborhood $Q\subseteq K$ such that
\[
\gamma_1(Q)\subseteq V[\gamma_1(x)]
\]
for each $\gamma=(\gamma_1,\gamma_2)\in L'$.
After shrinking~$Q$, we may assume that~$Q$ is connected.
If $\eta\in g^{-1}(L)=g^{-1}(L')$,
then $\gamma:=g(\eta)\in L'$ with $\gamma_1:=\pr_1\circ\,\gamma=\alpha\circ \eta$.
We have $\eta(x)\in B$ and thus $\eta(x)\in A_b$ for some $b\in I$.
Then $\gamma_1(x)=\alpha(\eta(x))\in \alpha(A_b)=L_b$,
whence $V[\gamma_1(x)]\subseteq \alpha(U_b)\cap D$.
Now $\zeta:=(\alpha|_{U_b})^{-1}\circ \gamma_1|_Q$
and $\eta|_Q$ are continuous maps $Q\to X$ such that $\alpha\circ \zeta=\gamma_1|_Q=
\alpha\circ \eta|_Q$ and $\zeta(x)=\eta|_Q(x)$ as $\eta(x),\zeta(x)\in U_b$
and $\alpha|_{U_b}$ is injective. Hence $\eta|_Q=\zeta$, by Lemma~\ref{equaliza}.
Thus $\eta(Q)=(\alpha|_{U_b})^{-1}(\gamma_1(Q))\subseteq U_b$
and hence $\{\eta(x)\}\times \eta(Q)\subseteq A_b\times U_b$.
As $\eta(K)\subseteq B$, we deduce that $\{\eta(x)\}\times\eta(Q)\subseteq (U_b\cap B)\times (U_b\cap B)\subseteq W$
and thus $\eta(Q)\subseteq W[\eta(x)]$. Hence $g^{-1}(L)$ is equicontinuous.

The general case: Assume now that~$S$ is a $k$-space
admitting a family $(K_i)_{i\in I}$ as specified in the proposition.
Since~$S$ is a $k$-space, we have
\[
C(S,X)=\pl_{K\in\cK(S)} C(K,X)
\]
as a topological space, where $\cK(S)$ is the set of compact subsets of~$S$
(directed under inclusion). The limit maps are the restriction maps
\[
\rho_K\colon C(S,X)\to C(K,X),\;\, \gamma\mto\gamma|_k.
\]
Let $\cL:=\{\bigcup_{i\in \Phi}K_i\colon \mbox{$\Phi\subseteq I$, $\Phi$ finite}\}$.
Then $\cL$ is cofinal in~$\cK(S)$, whence
\[
C(S,X)=\pl_{K\in\cL}C(K,X).
\]
As a consequence,
\[
\rho:=(\rho_K)_{K\in\cL}\colon C(S,X)\to\prod_{K\in\cL}C(K,X),\;\,\gamma\mto(\gamma|_K)_{K\in\cL}
\]
is a topological embedding onto a closed subset.
Given $K\in\cL$, there is a finite subset $\Phi_K\subseteq I$ such that $K=\bigcup_{i\in\Phi_K}K_i$.
The map
\[
\sigma_K\colon C(K,X)\to\prod_{i\in\Phi_K}C(K_i,X),\;\,
\gamma\mto (\gamma|_{K_i})_{i\in \Phi_K}
\]
is continuous, injective, and its image is the set
\[
\{(\gamma_i)_{i\in\Phi_K}\colon (\forall i,j\in\Phi_K)\;\gamma_i|_{K_i\cap K_j}=\gamma_j|_{K_i\cap K_j}\}
\]
(by the Glueing Lemma), which is closed in $\prod_{i\in \Phi_K}C(K_i,X)$.
If $H\subseteq K$ is compact and $O\subseteq X$ an open set, then
$\gamma\in C(K,X)$ satisfies
$\gamma(H)\subseteq O$ if and only if $\gamma|_{K_i}$ satisfies $\gamma|_{K_i}(K_i\cap H)\subseteq O$
for all $i\in\Phi_K$. As a consequence, $\sigma_K$ is a topological embedding.
Thus
\[
\Big(\prod_{K\in\cL}\sigma_K\Big)\circ \rho\, \colon \, C(S,X)\to\prod_{K\in\cL}\prod_{i\in\Phi_K}C(K_i,X)
\]
is a topological embedding with closed image. Let $L\subseteq C(S,Y\times Z)$ be a compact set.
Then $g^{-1}(L)$ is closed in $C(S,X)$. By the preceding, $g^{-1}(L)$
will be compact if we can show that $\rho_{K_i}(g^{-1}(L))$ is relatively compact
in $C(K_i,X)$ for each $i\in I$.
We now use that $R_i\colon C(X,Y\times Z)\to C(K_i,Y\times Z)$, $\gamma\mto \gamma|_{K_i}$
is continuous, whence $R_i(L)$ is compact. From the above special case, we know that
the map
\[
g_i\colon C(K_i,X)\to C(K_i,Y\times Z),\;\, \gamma\mto (\alpha\circ \gamma,\beta\circ\gamma)
\]
is proper, whence $g_i^{-1}(R_i(L))$ is compact in $C(K_i,X)$.
Now $g_i\circ \rho_{K_i}=R_i\circ g$.
To see that $\rho_{K_i}(g^{-1}(L))$
is relatively compact, it only remains to note that $\rho_{K_i}(g^{-1}(L))\subseteq g_i^{-1}(R_i(L))$
as $g_i(\rho_{K_i}(g^{-1}(L)))=R_i(g(g^{-1}(L)))\subseteq R_i(L)$.
\end{proof}
After these preparations, we are now in a position to prove Theorem E (d), which we repeat here for the reader's convenience.

\begin{numba}[Theorem E\, (d)]\emph{If $f\colon M \rightarrow N$ is a proper $C^{k+\ell}$-map, $M$ is a regular topological space and $N=N_1\times N_2$
with smooth manifolds~$N_1$ and~$N_2$ such that~$N_1$ admits a local addition
and
$\pr_1\circ\, f\colon M\to N_1$ is a local $C^{k+\ell}$-diffeomorphism, then $C^\ell(K,f)$ is proper.}
\end{numba}

\begin{proof}[Proof of Theorem E (d)]
Abbreviate $g:=C^\ell(K,f)$.
Let $L\subseteq C^\ell(K,N)$ be a compact subset; we have to verify that $g^{-1}(L)\subseteq C^\ell(K,M)$
is compact. We show that each net $(\gamma_j)_{j\in J}$ in $g^{-1}(L)$ has
a convergent subnet in $g^{-1}(L)$. Since $g(\gamma_j)\in L$ and~$L$ is compact,
after passage to a subnet we may assume that $g(\gamma_j)\to\eta$ in $C^\ell(K,N)$
for some $\eta\in L$. Write $\eta=(\eta_1,\eta_2)$ with $\eta_i\in C^\ell(K,N_i)$ for
$i\in\{1,2\}$. The map
\[
h\colon C(K,M)\to C(K,N),\;\, \gamma\mto(\alpha\circ \gamma,\beta\circ\gamma)
\]
is proper, by Proposition~\ref{ess-prop}.
Since $L$ also is a compact subset of $C(K,N)$,
\[
C:=h^{-1}(L)=\{\gamma\in C(K,M)\colon (\alpha\circ\gamma,\beta\circ\gamma)\in L\}
\]
is compact in $C(K,M)$. Hence, after passage to a subnet we may assume that
$\gamma_j$ converges to some $\gamma\in C$
with respect to the compact-open topology on $C(K,M)$.
As~$h$ is continuous, we must have $h(\gamma_j)\to h(\gamma)$ in $C(K,N)$.
But $h(\gamma_j)=g(\gamma_j)\to\eta$ in $C^\ell(K,N)$ and hence also in $C(K,N)$.
Hence $h(\gamma)=\eta\in L\subseteq C^\ell(K,N_1)\times C^\ell(K,N_2)$,
whence $\alpha\circ\gamma =\pr_1\circ \, h(\gamma)=\pr_1\circ\, \eta\in C^\ell(K,N_1)$.
Using Lemma~\ref{hence-Ck}, we deduce that $\gamma \in C^\ell(K,M)$.

We  show that $\gamma_j\to\gamma$ in $C^\ell(K,M)$:
Given $x\in K$, let $W\subseteq M$ be an open $\gamma(x)$-neighborhood
such that $\alpha(W)\subseteq N_1$ is open and $\alpha|_W\colon W\to\alpha(W)$ is
a $C^\ell$-diffeomorphism. Let $\phi\colon U_\phi\to V_\phi\subseteq E$
be a chart of~$K$ around~$x$ and $K_x:=\phi^{-1}(C_x)$,
where $C_x$ is a compact convex $\phi(x)$-neighborhood in~$V_\phi$;
choosing~$C_x$ small enough, we may assume that $\gamma(K_x)\subseteq W$
and thus $\eta_1(K_x)\subseteq \alpha(W)$. Let~$U_x$ be the interior of $K_x$ in~$K$.
We then get a continuous restriction map
\[
\rho_{K_x}\colon C^\ell(K,N_1)\to C^\ell(K_x,N_1),\;\, \theta\mto \theta|_{K_x}.
\]
% (!)
There exists an index $j_x$ such that $\gamma_j(K_x)\subseteq
W$ for all $j\geq j_x$. Then
\[
\gamma_j|_{K_x}=(\alpha|_W)^{-1}\circ (\alpha\circ\gamma_j)|_{K_x}
=C^\ell(K_x,(\alpha|_W)^{-1})(\rho_{K_x}(\alpha\circ \gamma_j)),
\]
which converges to $C^\ell(K_x,(\alpha|_W)^{-1})(\rho_{K_x}(\eta_1))=
(\alpha|_W)^{-1}\circ\eta_1|_{K_x}$ in $C^\ell(K_x,M)$ as $\alpha_1\circ\gamma_j$
converges to $\eta_1$ in $C^\ell(K,N_1)$.
In particular, $\gamma_j|_{K_x}\to (\alpha|_W)^{-1}\circ \eta_1|_{K_x}$ in $C(K_x,M)$.
Since also $\gamma_j|_{K_x}\to \gamma|_{K_x}$ in $C(K_x,M)$, we deduce that
$\gamma|_{K_x}=(\alpha|_W)^{-1}\circ\eta_1|_{K_x}$. Hence
\[
\gamma_j|_{K_x}\to \gamma|_{K_x}\;\,\mbox{in $C^\ell(K_x,M)$,}
\]
whence $\gamma_j|_{U_x}\to \gamma|_{U_x}$ in $C^\ell(U_x,M)$
a fortiori. As $(U_x)_{x\in K}$ is an open cover of~$K$, we deduce with Lemma~\ref{top-cover}
that $\gamma_j\to\gamma$ in $C^\ell(K,M)$. The proof is complete.
\end{proof}
\section{Current groupoids}\label{sec:currentgpd}
In this section, we deal with the Lie groupoids of mappings on
a manifold with values in a (possibly infinite-dimensional) manifold. These were defined in the introduction and we briefly recall the construction and prove Theorems~A--C.

\begin{numba}[Current groupoids]
We let $\mathcal{G} = (G\toto M)$ be a Lie groupoid, $K$ be a compact manifold and $\ell \in \N_0 \cup \{\infty\}$. 
Define now the \emph{current groupoid} $C^\ell (K,\mathcal{G})$ as the groupoid given by the following data 
\begin{itemize}
\item $C^\ell (K,G)$ (space of arrows), $C^\ell (K,M)$ (space of units)
\item pointwise groupoid operations, i.e.\ the pushforwards of the groupoid maps $\alpha_*$, $\beta_*$, $m_*$, $\iota_*$ and $\mathbf{1}_*$ are the source, target, multiplication, inversion and unit maps.
\end{itemize}
\end{numba}

Clearly a current groupoid is a groupoid. The following theorem, which encompasses Theorem A of the introduction, will now establish that current groupoids are Lie groupoids.

\begin{thm}\label{thm:A:ext}
Let $\mathcal{G} = (G\toto M)$ be a Lie groupoid, where~$M$ is a Banach manifold. Fix a compact manifold $K$ (possibly with rough boundary),
and $\ell\in\N_0 \cup \{\infty\}$. If $\ell=0$, assume that all modelling spaces of~$M$ are finite-dimensional and if $\ell =\infty$ we assume in addition that all modelling spaces of $G$ are finite-dimensional.
If~$G$ and~$M$ admit a local addition,
then the current groupoid $C^\ell(K,\mathcal{G})$ is a Lie groupoid.
\end{thm}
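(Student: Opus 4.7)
My plan is to bootstrap the Lie-groupoid structure from the three main results of Section~2: Proposition~\ref{prop: can:locadd} produces canonical smooth manifold structures on $C^\ell(K,G)$ and $C^\ell(K,M)$ (since $G$ and $M$ admit local additions); Corollary~\ref{la-reu} turns every smooth map between target manifolds into a smooth push-forward; and Theorem~E(a) promotes submersions to submersions. Because the groupoid identities (associativity of multiplication, unit and inverse laws) hold pointwise on~$K$, they automatically pass to the push-forwards, so the only nontrivial content of the theorem is smoothness of the structural maps, the submersion property of source and target, and the submanifold structure on the space of composable arrows.

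First I would apply Corollary~\ref{la-reu} to the smooth structural maps $\alpha,\beta,\iota,\mathbf{1}$ of $\mathcal{G}$ to obtain smoothness of $\alpha_{*},\beta_{*},\iota_{*},\mathbf{1}_{*}$. For the submersion property of $\alpha_{*}$ (and, symmetrically, $\beta_{*}$) I would apply Theorem~E(a) with $k=\infty$ whenever $\ell<\infty$: the hypothesis that~$M$ is Banach covers $\ell\ge 1$, while for $\ell=0$ the additional assumption that every model space of~$M$ is finite-dimensional makes~$M$ trivially Banach and simultaneously waives the side condition $\ell\ge 1$. The case $\ell=\infty$ lies outside the scope of Theorem~E(a), but there~$G$ and~$M$ are both finite-dimensional by hypothesis, so the Stacey-Roberts Lemma (see the remark following the proof of Theorem~E(a) and \cite{AS17}) supplies the submersion property.

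It remains to realise the set of composable pairs
\[
(C^\ell(K,G))^{(2)}=\{(\gamma_1,\gamma_2)\in C^\ell(K,G)^2\colon \alpha_{*}(\gamma_1)=\beta_{*}(\gamma_2)\}
\]
as a smooth submanifold of $C^\ell(K,G)\times C^\ell(K,G)$ and to check smoothness of $m_{*}$ on it. I would proceed via the pointwise bijection
$(\gamma_1,\gamma_2)\leftrightarrow (x\mapsto(\gamma_1(x),\gamma_2(x)))$
between $(C^\ell(K,G))^{(2)}$ and $C^\ell(K,G^{(2)})$. Because $\alpha$ is a submersion, $G^{(2)}$ is a split submanifold of $G\times G$; using local sections of $\alpha$ built from the local addition on~$M$, one can arrange a local addition on $G\times G$ that restricts to one on~$G^{(2)}$. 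Proposition~\ref{prop: can:locadd} then equips $C^\ell(K,G^{(2)})$ with a canonical manifold structure, and Proposition~\ref{prop:emb} (or Theorem~F in the purely finite-dimensional subcase) shows that the push-forward of the inclusion $G^{(2)}\hookrightarrow G\times G$ is a smooth embedding of $C^\ell(K,G^{(2)})$ into $C^\ell(K,G)\times C^\ell(K,G)\cong C^\ell(K,G\times G)$ with image $(C^\ell(K,G))^{(2)}$. A final application of Corollary~\ref{la-reu} to the smooth multiplication $m\colon G^{(2)}\to G$ delivers smoothness of $m_{*}$.

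The step I expect to require the most care is the construction of a local addition on~$G^{(2)}$ compatible with the one on $G\times G$: in the finite-dimensional regime this is routine, but in the Banach-groupoid case one must exploit the submersion structure of $\alpha$ to patch local additions on $G$ and $M$ into one on $G^{(2)}$, so that the submanifold and canonical-mapping-manifold pictures agree. Once this is settled, all Lie-groupoid axioms follow by direct combination of the ingredients above, completing the proof.
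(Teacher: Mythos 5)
Your overall skeleton matches the paper's: canonical manifold structures on $C^\ell(K,G)$ and $C^\ell(K,M)$ from Proposition~\ref{prop: can:locadd}, smoothness of $\alpha_*,\beta_*,\iota_*,\mathbf{1}_*$ from Corollary~\ref{la-reu}, and the submersion property of $\alpha_*,\beta_*$ from Theorem~E\,(a) (resp.\ the Stacey--Roberts Lemma when $\ell=\infty$). That part is correct and is exactly how the paper argues.

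The genuine gap is in your treatment of the space of composable arrows. You propose to build a local addition on $G^{(2)}$ compatible with one on $G\times G$, put a canonical manifold structure on $C^\ell(K,G^{(2)})$ via Proposition~\ref{prop: can:locadd}, and then invoke Proposition~\ref{prop:emb} to embed it into $C^\ell(K,G\times G)$. But Proposition~\ref{prop:emb} requires either that both manifolds be finite-dimensional or that the source be a Banach manifold with $1\leq\ell<\infty$; Theorem~\ref{thm:A:ext} only assumes $M$ is Banach, so $G$ (hence $G^{(2)}$ and $G\times G$) may be modelled on arbitrary locally convex spaces when $0\leq\ell<\infty$, and in the case $\ell=0$ with infinite-dimensional $G$ neither alternative of Proposition~\ref{prop:emb} applies. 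Your route therefore does not cover all cases of the theorem, and the auxiliary construction of a local addition on $G^{(2)}$ — which you yourself flag as the delicate step — is both unproven and unnecessary. The paper avoids all of this: once $\alpha_*$ and $\beta_*$ are known to be submersions, the fiber product $C^\ell(K,G)^{(2)}=\{(\gamma_1,\gamma_2):\alpha_*(\gamma_1)=\beta_*(\gamma_2)\}$ is automatically a (split) submanifold of $C^\ell(K,G)\times C^\ell(K,G)$ by \cite[Theorem~B]{SUB}; this set equals $C^\ell(K,G^{(2)})$, the induced submanifold structure is canonical by Lemma~\ref{base-cano}\,(c), and Corollary~\ref{la-reu} applied to $\mu\colon G^{(2)}\to G$ then gives smoothness of the multiplication. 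Replacing your third paragraph by this fiber-product argument closes the gap.
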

\begin{proof}
As we assume that $G$ and $M$ admit a local addition,
$C^\ell(K,G)$ and $C^\ell(K,M)$ admit canonical smooth manifold structures.
By Theorem~E (a) (or for $\ell = \infty$ by the Stacey-Roberts Lemma \cite[Lemma 2.4]{AS17}),
the mappings
\[
\alpha_*:=C^\ell(K,\alpha)\colon C^\ell(K,G)\to C^\ell(K,M)
\]
and $\beta_*:=C^\ell(K,\beta)$ are submersions.
As a consequence, the fiber product
\[
C^\ell(K,G)^{(2)}:=\{(\gamma_1,\gamma_2)\in C^\ell(K,G)\times C^\ell(K,G)\colon
\alpha_*(\gamma_1)=\beta_*(\gamma_2)\}
\]
is a submanifold of $C^\ell(K,G)\times C^\ell(K,G)$ (see \cite[Theorem~B]{SUB}).
Now $C^\ell(K,G)^{(2)}$ $=C^\ell(K,G^{(2)})$ as a set, which
enables a groupoid multiplication on $C^\ell(K,G)$ to be defined via
\[
\mu_*\colon C^\ell(K,G)^{(2)}\to C^\ell(K,G),\;\, (\gamma_1,\gamma_2)\mto
\mu\circ(\gamma_1,\gamma_2),
\]
where $\mu\colon G^{(2)}\to G$ is the smooth multiplication in the groupoid~$G$.
By Lemma~\ref{la-reu}, $\mu_*$ is smooth.
Since~$\mathcal{G}$ is a Lie groupoid, the map $\mathbf{1}\colon M\to G$,
$x\mto e_x\in G_x$ is smooth. Then $e_\gamma:=\mathbf{1}\circ \gamma$ is the neutral element in
$C^\ell(K,G)_\gamma$ for $\gamma\in C^\ell(K,M)$, and the map
$C^\ell(K,\mathbf{1})\colon C^\ell(K,M)\to C^\ell(K,G)$, $\gamma\mto e_\gamma=\mathbf{1}\circ \gamma$
is smooth. As the inversion map $\iota\colon G\to G$
is smooth, also $\iota_*\colon C^\ell(K,G)\to C^\ell(K,G)$ is smooth.
Thus $C^\ell(K,G)$ is a Lie groupoid.
\end{proof}

\begin{exa}
Recall that a locally convex Lie group $G$ can be made into a Lie groupoid $G \toto \{\star\}$ over the one-point manifold $\{\star\}$ (which is trivially a Banach manifold). Note that $C^\ell (K,\{\star\}) = \{\star\}$. 
Moreover, $G$ admits a local addition \cite[42.4]{KaM} whence Theorem A yields a current groupoid $C^\ell (K, G) \toto \{\star\}$ which can be canonically identified with the current group $C^\ell (K,G)$ from \cite{NaW}.
Thus for the circle $K = \Sph$ our construction recovers the loop groups from \cite{PaS}.
\end{exa}

\begin{exa}\label{exa:unitgpd}
Let $M$ be a smooth manifold with local addition. 
Recall the following groupoids associated to $M$:
\begin{itemize}
\item the \emph{unit groupoid} $\mathfrak{u}(M) = (M\toto M)$ where all structure maps are the identity.
\item the \emph{pair groupoid} $\mathcal{P} (M) = (M\times M \toto M)$ where the groupoid multiplication is given by $(a,b)\cdot (b,c) \coloneq (a,c)$ (and the other structure maps are obvious).
\end{itemize}
For $K$ compact and $\ell \in \N_0 \cup \{\infty\}$ (with $M$ finite dimensional if $\ell = \infty$) our construction yields $C^\ell (K,\mathfrak{u}(M)) = \mathfrak{u}(C^\ell (K,M))$. Collapsing the groupoid structure in this (trivial) example, the current groupoid encodes only the manifold of
$C^\ell$-maps $K \rightarrow M$, \cite{Wit,Mic}.
In view of Lemma \ref{base-cano} (d) we further have $C^\ell (K,\mathcal{P}(M)) \cong \mathcal{P}(C^\ell (K,(M)))$ as Lie groupoids.
\end{exa}

\begin{rem}
Note that in the situation of Theorem \ref{thm:A:ext}, the current groupoid $C^\ell (K,\mathcal{G})$ of a
Banach-Lie groupoid $\mathcal{G}$, is a Banach-Lie groupoid if $\ell < \infty$ and a Fr\'{e}chet-Lie
groupoid if $\ell = \infty$. Basic Lie theory and differential geometry for Banach-Lie groupoids have
recently been studied in \cite{BaGaJaP} (also see \cite{MaZ} for a discussion in a categorical framework).
\end{rem}

If $\Omega \subseteq M$ is open, the restriction $\mathcal{G}|_\Omega \coloneq (G|_{\Omega} \coloneq \alpha^{-1} (\Omega) \cap \beta^{-1} (\Omega) \toto \Omega)$ becomes a Lie groupoid called \emph{restriction of $\mathcal{G}$ to $\Omega$} \cite[p.\ 14]{Mac}.

\begin{exa}\label{exa: restriction}
In the situation of Theorem \ref{thm:A:ext} consider $\Omega \subseteq M$ open. Then we define $\lfloor K , \Omega\rfloor_\ell \coloneq \{f\in C^\ell (K,M) \mid f(K)\subseteq \Omega\}$ and note that it is an open subset of $C^\ell (K,M)$ due to Proposition \ref{fstar-gen}.
One immediately computes that the restriction of the current groupoid to $\lfloor K,\Omega\rfloor_\ell$ satisfies 
$$C^\ell (K,\mathcal{G})|_{\lfloor K,\Omega\rfloor_\ell} = C^\ell (K, \mathcal{G}|_\Omega) \text{ (as Lie groupoids)}.$$
Thus $C^\ell (K,\mathcal{G}|_\Omega)$ is an open subgroupoid of the current groupoid $C^\ell (K,\mathcal{G})$. 
\end{exa}

\begin{exa}\label{exa:action}
Consider a left Lie group action\footnote{Thus $\Lambda$ is
a left $G$-action on a smooth manifold $M$ and $\Lambda$ is smooth}
$\Lambda \colon G \times M \rightarrow M, (g,m)\mapsto g.m$. Then the action groupoid $G\ltimes M \coloneq (G\times M \toto M)$ is the Lie groupoid defined by the structure maps $\alpha (g,m)\coloneq m$, $\beta(g,m)\coloneq g.m$, $\mu((g,h.m),(h,m)))\coloneq (gh,m)$ and $\iota (g,m)\coloneq (g^{-1},g.m)$.
If $M$ admits a local addition, so does $G\times M$\footnote{$G$ admits a local addition as a Lie group and the product $G\times M$ inherits a local addition by the product of the local additions on $G$ and $M$.}, whence we can consider the current groupoid $C^\ell (K, G\ltimes M)$.

As the manifolds of mappings are canonical, Lemma \ref{base-cano} shows that
$$\Lambda_* \colon C^\ell (K,G\times M) \cong C^\ell (K,G)\times C^\ell (K,M) \rightarrow C^\ell (K,M)$$ 
is a Lie group action of the current Lie group $C^\ell (K,G)$ on $C^\ell (K,M)$ and moreover, the associated action groupoid satisfies 
$$C^\ell (K,G) \ltimes C^\ell (K,M) \cong C^\ell (K, G\ltimes M)$$
as Lie groupoids.
\end{exa}

We will now study some specific classes of current groupoids in the next sections. There Theorems B and C from the introduction will be established as immediate consequences of Theorem E in Section~\ref{sec: m:prop}. 

\subsection*{Transitivity and local transitivity of current groupoids}
 \addcontentsline{toc}{subsection}{Transitivity and local transitivity of current groupoids}

In this section, we investigate whether the current groupoid inherits the transitivity of the target groupoid
(resp., local transitivity).
To this end recall the following definitions.

\begin{defn} 
Let $\mathcal{G} = (G\toto M)$ be a Lie groupoid. Then we call the map
$$(\alpha,\beta) \colon G \rightarrow M\times M,\quad g \mapsto (\alpha(g),\beta(g))$$ 
the \emph{anchor} of $\mathcal{G}$.
We call the Lie groupoid $\mathcal{G}$
\begin{itemize}
\item \emph{transitive} if the anchor is a surjective submersion,
and \emph{totally intransitive} if the image of the anchor is the diagonal in $M\times M$;
\item \emph{locally transitive} if the anchor is a submersion.
\end{itemize}
\end{defn}

The next example shows that transitivity is not inherited by current groupoids:

\begin{exa}\label{not-tra}
Consider the left action of $\R$ on~$\Sph$ via $t.z:=e^{it}z$
and the corresponding action groupoid
$G\coloneq \R\ltimes \Sph$ over $M\coloneq\Sph$ with $\alpha\colon G\to\Sph$, $(t,z)\mto z$
and $\beta\colon G\to\Sph$, $(t,z)\mto e^{it}z$. Then $\alpha$, $\beta$, and $(\alpha,\beta)$
are submersions and $G$ is a transitive groupoid as the $\R$-action on~$\Sph$ is
transitive.
Taking $K:=\Sph$ and $\ell\in\N_0$,
we obtain a current groupoid $C^\ell(\Sph,G)$.
Let $c_1\colon \Sph\to\Sph$ be the constant map taking each element to $1\in\Sph$.
Then $(\alpha_*,\beta_*)\colon C^\ell(\Sph,G)\to C^\ell(\Sph,\Sph) \times C^\ell(\Sph,\Sph)$
is not surjective (whence $C^\ell(\Sph,G)$ is not a transitive Lie groupoid),
as $(\id_{\Sph},c_1)$ is not contained in its image.
In fact, if there was $\gamma=(\gamma_1,\gamma_2)\in C^\ell(\Sph,G)=C^\ell(\Sph,\R\times \Sph)$
with $(\alpha_*,\beta_*)(\gamma)=(\id_{\Sph},c_1)$, then
\[
\gamma_2=\alpha\circ \gamma =\id_{\Sph}.
\]
Hence $1=\beta(\gamma(z))=e^{i\gamma_1(z)}z$ for all $z\in \Sph$
and thus $z=e^{-i\gamma_1(z)}$, contradicting the fact that $\id_{\Sph}$
does not admit a continuous lift for the covering map $\R\to\Sph$, $t\mto e^{it}$.
\end{exa}

\begin{thm}[Theorem B]
If $\mathcal{G}$ is locally transitive in the situation of Theorem \ref{thm:A:ext}, then also $C^\ell(K,\mathcal{G})$ is locally transitive.
\end{thm}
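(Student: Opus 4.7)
The plan is to reduce Theorem~B directly to Theorem~E\,(a) applied to the anchor of $\mathcal{G}$. Since $\mathcal{G}$ is locally transitive, the anchor
\[
(\alpha,\beta)\colon G \to M\times M
\]
is a $C^\infty$-submersion, and it is a smooth map between manifolds admitting local additions (the product $M\times M$ admits the product local addition). Under the canonical identification $C^\ell(K, M\times M)\cong C^\ell(K,M)\times C^\ell(K,M)$ provided by Lemma~\ref{base-cano}\,(d), the pushforward $C^\ell(K,(\alpha,\beta))$ is precisely the anchor $(\alpha_*,\beta_*)$ of the current groupoid $C^\ell(K,\mathcal{G})$. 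Thus it suffices to show that $C^\ell(K,(\alpha,\beta))$ is a $C^\infty$-submersion.

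The second step is to verify that the hypotheses of Theorem~E\,(a) (or of the Stacey--Roberts Lemma) hold in each of the three cases covered by Theorem~A. In the case $\ell\in\N$ with $M$ a Banach manifold, the target $M\times M$ is again Banach, we may take $k=\infty\geq 2$, and $\ell\geq 1$ handles the condition needed when some modelling space of $M\times M$ is infinite-dimensional; hence Theorem~E\,(a) directly applies. In the case $\ell=0$ with $M$ finite-dimensional, $M\times M$ is finite-dimensional, removing any constraint on $\ell\geq 1$, so Theorem~E\,(a) applies with $k=\infty$. Finally, in the case $\ell=\infty$ with both $G$ and $M$ finite-dimensional, Theorem~E\,(a) does not apply because $\ell<\infty$ is required there, but the Stacey--Roberts Lemma (quoted in the remark after the proof of Theorem~E\,(a)) supplies the same conclusion for finite-dimensional source and target manifolds.

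Combining the two steps, $(\alpha_*,\beta_*)=C^\ell(K,(\alpha,\beta))$ is a submersion in all three cases, which is exactly the definition of local transitivity of $C^\ell(K,\mathcal{G})$.

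No serious obstacle is expected; the only points that need care are the bookkeeping of the three cases of Theorem~A and the trivial but essential observation that the pushforward commutes with the product identification, so that $C^\ell(K,(\alpha,\beta))$ literally equals the anchor of the current groupoid. Both are routine once Theorem~E\,(a) and Lemma~\ref{base-cano}\,(d) are in hand.
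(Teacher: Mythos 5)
Your proof is correct and follows essentially the same route as the paper: identify $(\alpha_*,\beta_*)$ with $C^\ell(K,(\alpha,\beta))$ via Lemma~\ref{base-cano}\,(d) and apply Theorem~E\,(a) to the anchor. In fact you are slightly more careful than the paper's own two-line proof, which cites only Theorem~E\,(a); your explicit appeal to the Stacey--Roberts Lemma for the $\ell=\infty$ case matches what the paper does in the proof of Theorem~\ref{thm:A:ext} and is the right way to cover all three cases.
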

\begin{proof}
Identifying the manifold $C^\ell(K,M)\times C^\ell(K,M)$ with $C^\ell(K,M\times M)$ (Lemma \ref{base-cano}),
the map $(C^\ell(K,\alpha),C^\ell(K,\beta))$ can be identified with
$C^\ell(K,(\alpha,\beta))\colon C^\ell(K,G)\to C^\ell(K,M\times M)$, $\gamma\mto (\alpha,\beta)\circ \gamma$. As $(\alpha,\beta)$ is a submersion, also $C^\ell(K,(\alpha,\beta))$
is a submersion (by Theorem~E\,(a)) and hence also $(C^\ell(K,\alpha),C^\ell(K,\beta))$.
Thus $C^\ell(K,G)$ is locally transitive.
\end{proof}

\subsection*{Current groupoids of proper and \'{e}tale Lie groupoids}
 \addcontentsline{toc}{subsection}{Current groupoids of proper and \'{e}tale Lie groupoids}

In this section, we study proper and \'{e}tale Lie groupoids. These Lie groupoids are closely connected to orbifolds and we review this connection in Appendix \ref{app:orbi} together with a discussion of how the groupoids we construct are connected to morphisms of orbifolds (see e.g.\ \cite{Sch}). 
Recall that a Lie groupoid is proper if the anchor is a proper map and \'{e}tale if the source map is a local diffeomorphism.
As the following example shows, current groupoids of proper groupoids need not be proper.

\begin{exa}\label{not-proper}
Consider $G:=\Sph\times\Sph$ as a Lie groupoid over $M:=\Sph$
with $\alpha=\beta:=\pr_1\colon \Sph\times\Sph\to\Sph$, $(z,w)\mto z$
and $(z,w_1)(z,w_2):=(z,w_1w_2)$ for $z,w_1,w_2\in\Sph$
(using the multiplication in the circle group).
Thus the Lie groupoid we obtain is a Lie group bundle, which is a totally intransitive.

Then $(\alpha,\beta)\colon G\to M\times M$ is a proper map (as $G$ is compact).
Hence~$G$ is a proper Lie groupoid. However, the Lie groupoid $C(K,G)$
is not proper for any compact smooth manifold~$K$ of positive dimension.
To see this, note that $(\alpha_*,\beta_*)\colon C(K,G)\to C(K,M)\times C(K,M)$
is the map taking $(\gamma_1,\gamma_2)$ to $(\gamma_1,\gamma_1)$.
If we fix $\eta\in C(K,M)$, then
\[
(\alpha_*,\beta_*)^{-1}(\{\eta\})=\{\eta\}\times C(K,\Sph)
\]
in the topological space $C(K,\Sph)\times C(K,\Sph)\sim C(K,\Sph\times\Sph)=C(K,G)$.
As the singleton $\{\eta\}$ is compact but $C(K,\Sph)$ is an infinite-dimensional
manifold and hence not compact, we deduce that $(\alpha_*,\beta_*)$
is not a proper map.
\end{exa}

Though properness is not preserved in general, there are special situations, outlined in Theorem C, in which properness is preserved.

\begin{numba}[Theorem C]
\emph{Let $\mathcal{G}$ be an \'{e}tale Lie groupoid such that $G$ and~$M$ admit a local addition.
Let $K$ be a compact smooth manifold} (\emph{possibly with rough boundary}), \emph{and $\ell\in\N_0\cup\{\infty\}$.
If the topological space underlying~$G$ is regular, then $C^\ell(K,\mathcal{G})$ is an \'{e}tale Lie groupoid.
If, moreover, $\mathcal{G}$ is proper, then also $C^\ell(K,\mathcal{G})$ is proper.}
\end{numba} 

\begin{proof}
 Since $\mathcal{G}$ is an \'{e}tale Lie groupoid, $\alpha$ and $\beta$
are local $C^\infty$-diffeomorphisms. Using Theorem~E~(c), we deduce that
$C^\ell(K,\alpha)$ and $C^\ell(K,\beta)$ are local $C^\infty$-diffeomorphisms
and hence submersions. We now find as in the proof of Theorem~A that $C^\ell(K,G)$ is a Lie groupoid. As $C^\ell(K,\alpha)$ is a local $C^\infty$-diffeomorphism,
$C^\ell(K,G)$ is \'{e}tale. If $G$ is \'{e}tale and proper,
then $\alpha$ is a local $C^\infty$-diffeomorphism and $(\alpha,\beta)$
is proper, whence $C^\ell(K,(\alpha,\beta))$ (and hence also $(C^\ell(K,\alpha), C^\ell(K,\beta))$)
is a proper map, by Theorem E (d).  Thus $C^\ell(K,G)$ is a proper Lie groupoid in this case.\end{proof}
Finite-dimensional proper and \'{e}tale Lie groupoids are locally isomorphic to action groupoids, see \cite[Proposition 2.23]{Ler}. We establish a suitable version in our setting. 

\begin{prop}\label{prop:loc:actgpd}
Let $\mathcal{G} = (G\toto M)$ be a proper \'{e}tale Lie groupoid such that $M\times M$ is a $k$-space. Then $\mathcal{G}$ is locally isomorphic to an action groupoid, i.e.\ every $x\in M$ has an open neighborhood $U_x \subseteq M$ with an action of the isotropy group $G_x \coloneq \alpha^{-1} (x) \cap \beta^{-1} (x)$ such that there is an isomorphism of \'{e}tale Lie groupoids 
$$\mathcal{G}|_{U_x} \cong G_x \ltimes U_x.$$
\end{prop}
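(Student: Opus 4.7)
The approach mimics the classical proof for finite-dimensional proper \'{e}tale groupoids (as in \cite[Proposition 2.23]{Ler}), with the $k$-space hypothesis on $M\times M$ taking the place of local compactness.

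First I would observe that the isotropy group $G_x=(\alpha,\beta)^{-1}(\{(x,x)\})$ is finite: it is discrete as a subset of the discrete fibre $\alpha^{-1}(\{x\})$ (since $\alpha$ is \'{e}tale), and compact because $(\alpha,\beta)$ is proper. Enumerate $G_x=\{g_1,\dots,g_n\}$ and note that $\beta=\alpha\circ\iota$ (with $\iota$ the inversion diffeomorphism) is \'{e}tale as well. Using that $G$ is Hausdorff and both $\alpha,\beta$ are local diffeomorphisms, I can choose pairwise disjoint open neighborhoods $V_i\ni g_i$ on which both $\alpha|_{V_i}$ and $\beta|_{V_i}$ are diffeomorphisms onto a common open neighborhood $W_i$ of $x$, so that $\sigma_i:=\beta\circ(\alpha|_{V_i})^{-1}\colon W_i\to W_i$ is a diffeomorphism fixing $x$. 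Shrinking the $V_i$ once more, using continuity of multiplication and inversion of $\mathcal{G}$, I can further arrange $V_iV_j\subseteq V_k$ and $V_i^{-1}\subseteq V_{i'}$ whenever $g_ig_j=g_k$ and $g_i^{-1}=g_{i'}$. This forces $\sigma_i\circ\sigma_j=\sigma_k$ and $\sigma_i^{-1}=\sigma_{i'}$ on a neighborhood of $x$.

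The critical step, and main obstacle, is to find an open $x$-neighborhood $U\subseteq M$ with
\[
\alpha^{-1}(U)\cap\beta^{-1}(U)\;\subseteq\;\bigsqcup_{i=1}^n V_i.
\]
Here is where the $k$-space hypothesis enters. As recalled after Definition~\ref{defn: proper}, a proper map into a $k$-space is closed, so $C:=(\alpha,\beta)(G\setminus\bigsqcup_i V_i)$ is closed in $M\times M$. Every preimage of $(x,x)$ under $(\alpha,\beta)$ lies in $G_x\subseteq\bigsqcup_i V_i$, so $(x,x)\notin C$, and the product topology yields an open $U\ni x$ with $(U\times U)\cap C=\emptyset$, which is precisely the displayed inclusion. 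I would then shrink $U$ further so that $U\subseteq\bigcap_i W_i$ and iteratively replace $U$ by $\bigcap_i\sigma_i(U)\cap\bigcap_i\sigma_i^{-1}(U)$ to make it $G_x$-stable; since each new $U$ sits inside the original one, the disjoint-union property is preserved, and the $\sigma_i|_U$ then define a smooth action of $G_x$ on $U$ by the compatibilities already established.

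To finish, the smooth map
\[
\Phi\colon G_x\times U\;\longrightarrow\;\alpha^{-1}(U)\cap\beta^{-1}(U),\qquad (g_i,y)\longmapsto(\alpha|_{V_i})^{-1}(y),
\]
is a bijection (by the key step) with smooth inverse, since $\alpha$ is a diffeomorphism on each sheet $V_i\cap\alpha^{-1}(U)$ and $G_x$ is discrete; a routine check using $V_iV_j\subseteq V_k$ shows that $\Phi$ intertwines source, target and multiplication with the action groupoid structure on $G_x\ltimes U$, yielding the required isomorphism $\mathcal{G}|_U\cong G_x\ltimes U$. The hard part is the closed-map step: in the classical proof one uses local compactness of $M$ to apply properness via convergent sequences, whereas here the $k$-space hypothesis on $M\times M$ is the natural substitute that still guarantees that $(\alpha,\beta)$ is closed and thereby separates $(x,x)$ from everything outside the chosen bisections.
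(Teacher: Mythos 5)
Your proposal is correct and follows essentially the same route as the paper's proof: finiteness of $G_x$, disjoint bisection neighborhoods shrunk to be multiplicative, properness plus the $k$-space hypothesis to make $(\alpha,\beta)$ closed and separate $(x,x)$ from the image of the complement of the bisections, a further shrinking to a $G_x$-stable neighborhood, and the evident sheet-by-sheet isomorphism. The only point where the paper is more careful is your step ``arrange $V_iV_j\subseteq V_k$ by continuity'': since $m$ is only defined on the fiber product $G^{(2)}\subseteq G\times G$, the paper justifies this via the openness of the projections $G^{(2)}\to G$ (submersions by \cite[Theorem B]{SUB}) together with uniqueness of the composable partner on a bisection, but your version goes through as well using the subspace topology on $G^{(2)}$.
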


\begin{rem}
The proof of Proposition \ref{prop:loc:actgpd} follows closely the classical proof in \cite[Theorem 4]{MaP} with some added detail (see proof of claim below). In addition, loc.cit.\ assumes that the Lie groupoids are effective and finite dimensional. Both assumptions are not necessary for this part of the proof of \cite[Theorem 4, (4) $\Rightarrow$ (1)]{MaP}. To highlight this, we chose to provide full details. 
\end{rem}

\begin{proof}[{Proof of Proposition \ref{prop:loc:actgpd}}]
Let us show that for a fixed $x\in M$ there exists an open neighborhood on which the groupoid $\mathcal{G}$ restricts to an action groupoid.

Note first that since $(\alpha,\beta) \colon G \rightarrow M\times M$ is a proper local diffeomorphism, the group $G_x \coloneq (\alpha,\beta)^{-1} \{x\}$ is finite. For all $g \in G_x$ we choose an open $g$-neighborhood $\Omega_g$ in $G$ such that $\alpha|_{\Omega_g}$ and $\beta|_{\Omega_g}$ are diffeomorphisms onto their (open) image in $M$. Shrinking the $\Omega_g$ we may assume that they are pairwise disjoint. 

\textbf{Claim:} There are open $g$-neighborhoods $W_g \subseteq \Omega_g$ such that
\begin{equation}\label{eq:multsubset}
\forall g,h\in G_x \text{ and }(x,y) \in W_g \times W_h \text{ with } \alpha(x) = \beta(y), \text{ we have }xy \in \Omega_{gh} . 
\end{equation}
If this is true, then the proof can be concluded as follows: Consider the open $x$-neighborhood $U_x \coloneq \bigcap_{g\in G_x} \alpha (W_g)$. 
Since $(\alpha,\beta)$ is proper and $M\times M$ a $k$-space, $(\alpha,\beta)$ is a perfect map, whence closed (cf.\ remarks after Definition \ref{defn: proper}). Thus we can apply \cite[3.2.10. Wallace Theorem]{Eng} to obtain an open $x$-neighborhood $V_x \subseteq U_x$ with
\begin{equation}\label{eq: VX} \begin{aligned}
V_x \times V_x \cap (\alpha,\beta) \left(G\setminus \cup_{g\in G_x} W_g\right) =\emptyset, &\text{ i.e. } h \in G \text{ with } \alpha (h),\beta(h) \in N_x \\ &\Rightarrow h \in W_g \text{ for some } g \in G_x.
\end{aligned}
\end{equation}
As $W_g \subseteq \Omega_g$ and $\alpha,\beta$ restrict to diffeomorphisms on $\Omega_g$, we can now define for $g\in G_x$ the diffeomorphism 
$$\delta_g \colon \alpha (W_g) \rightarrow \beta (W_g),\quad \delta_g \coloneq \beta \circ (\alpha|_{W_g})^{-1}.$$ 
As every $\delta_g$ is defined on $V_x \subseteq U_x$, we can define an open $x$-neighborhood via 
$$N_x \coloneq \{ y\in V_x \mid \delta_g(y) \in V_x,\ \forall g \in G_x\} = V_x \cap \bigcap_{g\in G_x} \delta_g^{-1} (V_x).$$
We claim that $\delta_g(N_x) \subseteq N_x$ for all $g\in G_x$. To see this, note that since $y$ and $\delta_{g_1} (y)$ are contained in $V_x$, both $\delta_{g_2} \circ \delta_{g_1}(y)$ and $\delta_{g_1g_2}(y)$ are defined. 
By construction of $\delta_g$, $\delta_{g} (z)$ is the target of the unique arrow in $W_{g}$ starting at $z$. Thus $\delta_{g_2} \circ \delta_{g_1} (z)$ is the target of a product of arrows in $W_{g_2} \times W_{g_1}$ and by \eqref{eq:multsubset} this arrow is the unique arrow in $\Omega_{g_2g_1}$ starting at $z$. Now $\delta_{g_1g_2} (z)$ is the target of an arrow in $W_{g_2g_1} \subseteq \Omega_{g_2g_1}$ starting at $z$ and by uniqueness $\delta_{g_2} \circ \delta_{g_1} (z) = \delta_{g_2g_1}(z) \in V_x$ holds. Hence we obtain a group action 
\begin{equation}\label{eq: delta}
\delta \colon G_x \times N_x \rightarrow N_x,\quad  (g,y) \mapsto \delta_g (y).
\end{equation}
Now we define for $g\in G_x$ the open $g$-neighborhood 
$$O_g \coloneq W_g \cap \alpha^{-1} (N_x) = W_g \cap (\alpha,\beta)^{-1} (N_x\times N_x),$$
where the last identity follows from \eqref{eq: delta} as $\delta_g(y) \in N_x$ for all $y\in N_x, g\in G_x$. From \eqref{eq: VX} we deduce that $(\alpha,\beta)^{-1} (N_x\times N_x) = \sqcup_{g\in G_x} O_g$ is the disjoint union of the open sets $O_g$.
We can thus consider the open Lie subgroupoid $\mathcal{G}|_{N_x} = \sqcup_{g\in G_x} O_g \toto N_x$. Using that $\alpha$ restricts to a diffeomorphism on every $O_g$, we obtain a diffeomorphism 
$$\Phi \colon \sqcup_{g\in G_x} O_g \rightarrow G_x \times N_x ,\ \gamma \mapsto (g,\alpha(\gamma)), \text{ if } \gamma \in O_g.$$
From the definition of the $\delta$-action of $G_x$ on $N_x$ it is then clear, that $\Phi$ induces a Lie groupoid isomorphism $\mathcal{G}|_{N_x} \cong (G_x \ltimes N_x \toto N_x)$ onto the action groupoid associated to \eqref{eq: delta}.\smallskip

\textbf{Proof of the claim:} As $\mathcal{G}$ is a Lie groupoid, the multiplication $m \colon G\times_{\alpha,\beta} G \rightarrow G$ is continuous. By \cite[Theorem B]{SUB}, the domain $G\times_{\alpha,\beta} G$ is a split submanifold of $G\times G$ such that the projections $\text{pr}_i \colon G\times_{\alpha,\beta} G \rightarrow G, i \in \{1,2\}$ onto the $i$th component are submersions, whence open mappings. For every choice $g,h \in G_x$ we thus obtain open subsets 
$$L_{g,h} \coloneq \pr_{1} (\Omega_g\times \Omega_h \cap m^{-1} (\Omega_{gh})), \quad R_{g,h} \coloneq \pr_{2} (\Omega_g\times \Omega_h \cap m^{-1} (\Omega_{gh})).$$
By construction $g\in L_{g,h}\subseteq \Omega_g, h \in R_{g,h} \subseteq \Omega_h$. Let now $(x,y) \in L_{g,h} \times R_{g,h}$ such that $\alpha (x) = \beta(y)$. As $\alpha, \beta$ restrict to bijections on $\Omega_g$ for every $g\in G_x$, $(x,y)$ is the unique pair in $\Omega_g\times \Omega_h$ with $\alpha (x) = \beta(y)$. Now by construction of $L_{g,h}$, there must be (at least) one pair in $(x,z) \in  \Omega_g \times \Omega_h \cap m^{-1} (\Omega_{gh})$. By definition of this set, $(x,z) \in \Omega_g \times \Omega_h$ with $\alpha(x) = \beta(z)$, whence $z=y$. This entails $m(x,y) \in \Omega_{gh}$ whenever a pair of arrows in $L_{g,h}\times R_{g,h}$ is composable. Since $G_x$ is finite we obtain open $g$-neighborhoods $W_g \coloneq \bigcap_{h\in G_x} L_{g,h} \cap R_{h,g}$. By construction the $W_g$ satisfy \eqref{eq:multsubset}.  
\end{proof}

In the situation of Theorem C, the current groupoid of a proper \'{e}tale Lie groupoid is again proper \'{e}tale such that its base, $C^\ell (K,M)$, is a \Frechet\, manifold. Thus $C^\ell (K,M) \times C^\ell (K,M)$ is a \Frechet\, manifold, hence a $k$-space and we obtain:
\begin{cor}
In the situation of Theorem C, the proper \'{e}tale Lie groupoid $C^\ell (K,\mathcal{G})$ is locally isomorphic to an action groupoid.
\end{cor}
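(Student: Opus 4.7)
My plan is to deduce this corollary by a direct application of Proposition \ref{prop:loc:actgpd} to the current groupoid $C^\ell(K,\mathcal{G})$, so the argument reduces to verifying the two hypotheses of that proposition: properness/étaleness (already handled by Theorem C) and the $k$-space property of the base-squared.

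First, I would invoke Theorem C to conclude that $C^\ell(K,\mathcal{G})$ is a proper \'{e}tale Lie groupoid; this is where the assumption that $\mathcal{G}$ is proper \'{e}tale, together with the regularity of the topological space underlying $G$ and the existence of local additions on $G$ and $M$, is used. In particular, $C^\ell(K,\mathcal{G})$ carries the structure of a Lie groupoid whose base is $C^\ell(K,M)$, which the discussion preceding the corollary identifies as a Fr\'{e}chet manifold under the assumptions of Theorem C.

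Next, I would use Lemma \ref{base-cano}(d) to identify
\[
C^\ell(K,M) \times C^\ell(K,M) \;\cong\; C^\ell(K,M\times M)
\]
as smooth manifolds. Since this product is again a Fr\'{e}chet manifold (being modelled on the Fr\'{e}chet space obtained as the product of the modelling Fr\'{e}chet spaces of $C^\ell(K,M)$), it is metrizable and hence a $k$-space. This verifies the hypothesis of Proposition \ref{prop:loc:actgpd}.

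Applying Proposition \ref{prop:loc:actgpd} to $C^\ell(K,\mathcal{G})$ then yields, for every $\gamma \in C^\ell(K,M)$, an open $\gamma$-neighborhood $U_\gamma \subseteq C^\ell(K,M)$ together with an action of the (finite) isotropy group $(C^\ell(K,G))_\gamma$ on $U_\gamma$ such that $C^\ell(K,\mathcal{G})|_{U_\gamma}$ is isomorphic, as an \'{e}tale Lie groupoid, to the associated action groupoid. This is the claim. There is essentially no obstacle here beyond checking the $k$-space condition, which is the only non-formal step; everything else is an unpacking of the previously established results.
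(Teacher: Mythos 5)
Your argument is correct and is essentially the paper's own: the paper likewise obtains the result by noting that Theorem C makes $C^\ell(K,\mathcal{G})$ proper \'{e}tale with Fr\'{e}chet base, so that $C^\ell(K,M)\times C^\ell(K,M)$ is a Fr\'{e}chet manifold and hence a $k$-space, and then applying Proposition \ref{prop:loc:actgpd}. The only difference is that you spell out the (inessential) identification via Lemma \ref{base-cano}(d); otherwise the two proofs coincide.
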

\subsubsection*{Analogues to Theorem C for topological groupoids}

Let $G$ be a (Hausdorff) topological groupoid over the Hausdorff topological space~$M$,
with initial point map $\alpha\colon G\to $ and terminal point map $\beta\colon G\to M$.
Given a Hausdorff topological space~$K$,
we endow $C(K,G)$ and $C(K,M)$ with the compact-open topology.
Then $C(K,G)$ is a topological groupoid over the base $C(K,M)$,
with initial point map $C(K,\alpha)$ and terminal point map
$C(K,\beta)$ (as the latter maps, the groupoid multiplication
and the map taking a base point to its corresponding identity element
are continuous by standard results concerning the compact-open topology, like
\cite[Lemma~A.5.3]{GaN}).
\begin{numba}
A topological groupoid~$G$ is called \emph{\'{e}tale}
if $\alpha\colon G\to M$
is a local homeomorphism.
If $(\alpha,\beta)\colon G\to M\times M$ is a proper map,
then~$G$ is called \emph{proper}.
\end{numba}

The following is immediate from Propositions~\ref{map-eta}
and~\ref{ess-prop}.
\begin{cor}\label{top-eta}
Let $K$ be a compact Hausdorff topological space
and $G$ be a topological groupoid over a Hausdorff topological space~$M$.
Assume that the topological space underlying~$G$ is regular.
If $G$ is \'{e}tale,
then also the mapping groupoid
$C(K,G)$ is \'{e}tale.
If $G$ is \'{e}tale and proper
and $K$ is locally connected, then $C(K,G)$ is proper.\Punkt
\end{cor}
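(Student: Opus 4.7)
The plan is to deduce both statements directly from the two propositions cited in the corollary, with essentially no additional work beyond checking that the hypotheses translate cleanly.

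For the \'{e}tale statement, I would argue as follows. Since $G$ is \'{e}tale, the source map $\alpha \colon G \to M$ is a local homeomorphism. The corollary assumes the topological space underlying $G$ is regular, so Proposition~\ref{map-eta} applies (with the roles $M \leftsquigarrow G$ and $N \leftsquigarrow M$ of that proposition) and yields that $C(K,\alpha) \colon C(K,G) \to C(K,M)$ is a local homeomorphism. This is, by construction, the source map of the mapping groupoid $C(K,G)$, so $C(K,G)$ is \'{e}tale.

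For the properness statement, I would invoke Proposition~\ref{ess-prop} with $S \leftsquigarrow K$, $X \leftsquigarrow G$, $Y = Z \leftsquigarrow M$, and with $\alpha, \beta$ the source and target maps of the groupoid. I first need to verify that $K$ satisfies the hypotheses on $S$: since $K$ is compact Hausdorff it is a $k$-space, and since $K$ is assumed locally connected the trivial cover $(K_i)_{i\in\{*\}}$ with $K_* := K$ consists of a compact, locally connected set which contains every compact subset of $K$. Properness of $G$ gives that $(\alpha,\beta) \colon G \to M\times M$ is proper; \'{e}taleness gives the local homeomorphism property of $\alpha$; and $G$ is regular by assumption. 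Proposition~\ref{ess-prop} then produces that the map
\[
(C(K,\alpha), C(K,\beta)) \colon C(K,G) \to C(K,M) \times C(K,M)
\]
is proper, which under the canonical identification $C(K,M)\times C(K,M) \cong C(K,M\times M)$ is exactly the anchor of the topological groupoid $C(K,G)$. Hence $C(K,G)$ is proper.

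There is no real obstacle here: the corollary is essentially a recombination of the preceding two propositions, and the only things to check are the minor bookkeeping items — which space needs to be regular in Proposition~\ref{map-eta}, and that a compact Hausdorff locally connected space trivially meets the cover hypothesis of Proposition~\ref{ess-prop}. If anything could go subtly wrong, it would be the latter (one must ensure the hypothesis on $S$ in Proposition~\ref{ess-prop} is really a hypothesis on $K$ and not implicitly on $M$), but taking the one-element cover $\{K\}$ sidesteps this entirely.
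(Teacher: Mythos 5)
Your proposal is correct and is exactly the argument the paper intends: the corollary is stated as immediate from Propositions~\ref{map-eta} and~\ref{ess-prop}, and your instantiations (taking $G$ as the regular space in Proposition~\ref{map-eta}, and the one-element cover $\{K\}$ of the compact, locally connected space $K$ in Proposition~\ref{ess-prop}) are the right ones.
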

The results obtained in this section on proper \'{e}tale topological/Lie groupoids are used in Appendix \ref{app:orbi} to discuss (infinite-dimensional) orbifolds.

\subsection*{Subgroupoids and groupoid actions}
\addcontentsline{toc}{subsection}{Subgroupoids and groupoid actions}

In this section we explore subgroupoids and groupoid actions of current groupoids. To this end, let us observe first that the construction of current groupoids is functorial.

\begin{rem}[Functoriality of the current groupoid construction]
Let $F \colon \mathcal{G} \rightarrow \mathcal{H}$ be a morphism\footnote{That is a smooth map $F \colon H \rightarrow G$ (where $H$ and $G$ are the arrow manifolds) which is compatible with the groupoid multiplication and inversion in the obvious way and maps units to units. Since $F$ maps units to units, it descends to a smooth map $f$
between
the bases.} of Lie groupoids between Lie groupoids which satisfy the assumptions of Theorem \ref{thm:A:ext}. Then the push-forward induces a groupoid morphism 
$$C^\ell (K,F) \colon C^\ell (K,\mathcal{G}) \rightarrow C^\ell (K,\mathcal{H})$$
which is smooth due to Corollary \ref{la-reu}. Similarly, one can prove that the construction takes natural transformations between morphisms of Lie groupoids to natural transformations (cf.\ \cite[3.5]{MaZ}). 
In conclusion, we obtain for every compact manifold $K$ and $\ell \in \N_0 \cup \{\infty\}$ a ($2$-)functor between suitable ($2$-)categories of groupoids. In the present paper we will not investigate this further.
\end{rem}

\begin{defn}
Let $F \colon \mathcal{H} \rightarrow \mathcal{G}$ be a morphism of Lie groupoids. We call $H$ an
\begin{itemize}
\item \emph{immersed subgroupoid} of $\mathcal{G}$ if $F$ and the induced map on the base are injective immersions.
\item \emph{embedded subgroupoid} of $\mathcal{G}$ if $F$ and the induced map on the base are embeddings. 
\end{itemize}
\end{defn}

We have already seen in Example \ref{exa: restriction} that the restriction of a Lie groupoid to an open set gives rise to a corresponding restriction of the current groupoids. More generally, one immediately concludes from Theorem E (b) and Proposition \ref{prop:emb} the following.

\begin{cor}
Let $\mathcal{H}$ be an immersed Banach subgroupoid of the Banach groupoid $\mathcal{G}$ and $\ell \in \N$.
 Then $C^\ell (K,\mathcal{H})$ is an immersed Banach subgroupoid of $C^\ell (K,\mathcal{G})$.
If in addition $\mathcal{H}$ is an embedded subgroupoid of $\mathcal{G}$, then $C^\ell (K,\mathcal{H})$ is an embedded Banach subgroupoid of $C^\ell (K,\mathcal{G})$. 
\end{cor}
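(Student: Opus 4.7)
The plan is to deduce both statements directly from the functoriality remark together with the push-forward results of Section~\ref{sec: m:prop}, applied separately to the arrow map and the induced base map of the defining groupoid morphism.

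First I would unpack the data. The immersed (resp.\ embedded) subgroupoid structure is witnessed by a Lie groupoid morphism $F\colon\mathcal{H}\to\mathcal{G}$ for which the arrow map $F\colon H\to G$ and the induced base map $f\colon M_\mathcal{H}\to M_\mathcal{G}$ are injective $C^\infty$-immersions (resp.\ $C^\infty$-embeddings) between Banach manifolds. Since $\mathcal{G}$ and $\mathcal{H}$ are Banach-Lie groupoids admitting local additions and $1\leq\ell<\infty$, Theorem~\ref{thm:A:ext} yields Banach-Lie groupoid structures on $C^\ell(K,\mathcal{G})$ and $C^\ell(K,\mathcal{H})$, and the functoriality remark shows that $C^\ell(K,F)$ is a morphism of Lie groupoids with arrow map $C^\ell(K,F)\colon C^\ell(K,H)\to C^\ell(K,G)$ and base map $C^\ell(K,f)\colon C^\ell(K,M_\mathcal{H})\to C^\ell(K,M_\mathcal{G})$.

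For the immersed case I would apply Theorem~E\,(b) with $k=\infty$ to both $F$ and $f$; the hypotheses hold because the respective sources are Banach, $\ell\in\N$ gives $\ell\geq 1$, and the maps are smooth, so both $C^\ell(K,F)$ and $C^\ell(K,f)$ are $C^\infty$-immersions. Injectivity of each push-forward is immediate from pointwise evaluation: if $F\circ\gamma_1=F\circ\gamma_2$ then injectivity of~$F$ forces $\gamma_1=\gamma_2$, and analogously for~$f$. Combined with the fact that $C^\ell(K,F)$ is a Lie groupoid morphism, this exhibits $C^\ell(K,\mathcal{H})$ as an immersed Banach subgroupoid of $C^\ell(K,\mathcal{G})$. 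For the embedded case I would replace the appeal to Theorem~E\,(b) by Proposition~\ref{prop:emb} (invoking clause~(1) or clause~(2) according to whether the relevant modelling spaces are finite- or infinite-dimensional), yielding $C^\ell(K,F)$ and $C^\ell(K,f)$ as $C^\infty$-embeddings; the conclusion follows in the same manner.

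The step requiring the most care is just the bookkeeping of hypotheses---ensuring that local additions on~$H$ and on the bases are present so that the mapping-space manifold structures exist and are canonical, and picking the correct clause of Proposition~\ref{prop:emb} when parts of $\mathcal{H}$ happen to be finite-dimensional. Beyond this the corollary is a straightforward transfer of the material on push-forwards of immersions and embeddings to the groupoid framework, and no genuinely new argument is required.
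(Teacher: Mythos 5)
Your proof is correct and follows essentially the same route as the paper, which derives the corollary immediately from Theorem~E\,(b) (for the immersed case) and Proposition~\ref{prop:emb} (for the embedded case), applied to the arrow and base maps of the defining morphism via functoriality. The only point worth noting is that in the mixed case where the source is a finite-dimensional Banach manifold but the target is infinite-dimensional, the embedding claim is obtained by combining Theorem~E\,(b) with Lemma~\ref{embpfwd} rather than by either clause of Proposition~\ref{prop:emb} literally, which is exactly the bookkeeping you flag.
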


Another way to construct subgroupoids of current groupoids from open subsets of the manifold base will be discussed now.

\begin{numba}\label{numba: restriction}
For $\Omega \subseteq M$ open we define the set 
$$\mathcal{I}_K(\Omega) \coloneq \{f\in C^\ell (K,M) \mid f(K) \cap \Omega \neq \emptyset \} = \bigcup_{x \in K} \varepsilon (\cdot,x)^{-1} (\Omega)$$
As the evaluation is continuous by Lemma \ref{base-cano}, $\mathcal{I}_K(\Omega)$ is an open subset of the base of the current groupoid $C^\ell (K,\mathcal{G})$ and we can consider the restriction $C^\ell (K,\mathcal{G})|_{\mathcal{I}_K(\Omega)}$.
\end{numba}

For the next result we restrict ourselves to $\ell=\infty$. Though the authors believe that the statement is also true for $\ell \in \N_0$, the proof uses a result which, to our knowledge, has so far only been established in the $\ell = \infty$ case.

\begin{prop}
In the situation of Theorem \ref{thm:A:ext}, consider an open subset $E \subseteq C^\infty (K,M)$.
Then the restriction $C^\infty (K,\mathcal{G})|_E$ is an (open) embedded Lie subgroupoid of $C^\infty (K, \mathcal{G}|_{\varepsilon (E\times K)})$.
\end{prop}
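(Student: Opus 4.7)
The strategy is to realise both $C^\infty(K,\mathcal{G})|_E$ and $C^\infty(K,\mathcal{G}|_{\varepsilon(E\times K)})$ as open sub-Lie-groupoids of the ambient current groupoid $C^\infty(K,\mathcal{G})$, and then simply observe that the first sits openly inside the second. Set $\Omega \coloneq \varepsilon(E\times K) \subseteq M$. Once $\Omega$ is known to be open in $M$, the restricted groupoid $\mathcal{G}|_\Omega$ makes sense as a Lie groupoid, Theorem~\ref{thm:A:ext} applies to yield the current Lie groupoid $C^\infty(K,\mathcal{G}|_\Omega)$, and Example~\ref{exa: restriction} identifies it with the open sub-Lie-groupoid $C^\infty(K,\mathcal{G})|_{\lfloor K,\Omega\rfloor_\infty}$ of $C^\infty(K,\mathcal{G})$.

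The first step, and the main obstacle, is therefore to establish that $\Omega=\varepsilon(E\times K)$ is open in~$M$. This is where the restriction to $\ell=\infty$ is used: via the Stacey--Roberts Lemma (\cite[Lemma 2.4]{AS17}) one shows that the evaluation $\varepsilon\colon C^\infty(K,M)\times K\to M$ is a submersion -- equivalently (since $K$ is finite-dimensional) that $\varepsilon(\cdot,x)\colon C^\infty(K,M)\to M$ is a submersion for each fixed $x\in K$, which can be verified directly by using a local addition on~$M$ together with a bump function centred at~$x$ to construct a continuous linear right inverse of $T_\gamma\varepsilon(\cdot,x)\colon \Gamma_{C^\infty}(\gamma^*TM)\to T_{\gamma(x)}M$. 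Submersions are open maps, so $\Omega=\varepsilon(E\times K)$ is open and $\mathcal{G}|_\Omega$ is a Lie groupoid.

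It remains to verify the inclusion $E\subseteq\lfloor K,\Omega\rfloor_\infty$, which is immediate from the definition of $\Omega$: for any $f\in E$ and $x\in K$ one has $f(x)=\varepsilon(f,x)\in\Omega$, hence $f(K)\subseteq\Omega$. Since $E$ is open in the ambient space $C^\infty(K,M)$, it is a fortiori open in the subset $\lfloor K,\Omega\rfloor_\infty$, so the restriction $C^\infty(K,\mathcal{G})|_E$ is an open sub-Lie-groupoid of $C^\infty(K,\mathcal{G})|_{\lfloor K,\Omega\rfloor_\infty}$. Combining this with the identification from Example~\ref{exa: restriction} gives the desired conclusion that $C^\infty(K,\mathcal{G})|_E$ is an open embedded Lie subgroupoid of $C^\infty(K,\mathcal{G}|_{\varepsilon(E\times K)})$. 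The openness of $\varepsilon(E\times K)$ via Stacey--Roberts is precisely the step that does not (yet) generalise to $\ell\in\N_0$ in the authors' framework, accounting for the restriction to the smooth case.
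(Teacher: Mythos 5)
Your proof is correct and follows essentially the same route as the paper's: both arguments reduce everything to the fact that the evaluation map $\varepsilon\colon C^\infty(K,M)\times K\to M$ is a smooth surjective submersion, hence open, so that $\Omega=\varepsilon(E\times K)$ is open in~$M$, after which the inclusion of $C^\infty(K,\mathcal{G})|_E$ into $C^\infty(K,\mathcal{G}|_\Omega)$ as an open subgroupoid is a direct pointwise check. The only cosmetic difference is the justification of the submersion property: the paper simply cites \cite[Corollary 2.9]{SaW2} rather than the Stacey--Roberts Lemma (which concerns pushforwards $C^\infty(K,f)$, not evaluation), but your bump-function construction of a right inverse for $T_\gamma\varepsilon(\cdot,x)$ is exactly the standard proof of that cited result (it gives a na\"{\i}ve submersion, upgraded to a submersion via \cite[Theorem A]{SUB} since $M$ is finite-dimensional when $\ell=\infty$).
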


\begin{proof}
Observe first that $\varepsilon \colon C^\infty (K,M) \times K \rightarrow M$ is a smooth, surjective submersion by \cite[Corollary 2.9]{SaW2}. Hence $\varepsilon$ is open and so is
$\Omega \coloneq \varepsilon(E\times K)$.
Thus it makes sense to consider the restriction $C^\infty (K, \mathcal{G}|_{\varepsilon (E\times K)})$ as an open Lie subgroupoid of $C^\infty (K,\mathcal{G})$.

By the definition of the restriction, $f \in C^\infty (K,G)|_E$ satisfies $\alpha_* (f), \beta_* (f) \in E$, whence $f \in C^\infty (K,G|_{\varepsilon (K\times E)})$. We conclude that $C^\infty (K,\mathcal{G})|_E \subseteq C^\infty (K, \mathcal{G}|_{\varepsilon (E\times K)})$ as open sets, hence as open Lie subgroupoids. 
\end{proof}
In the rest of this section we study current groupoids related to groupoid actions. This generalises Example \ref{exa:action} of the current groupoid of an action groupoid. We recall first the definition of a Lie groupoid action.

\begin{defn}
An action of a Lie groupoid $\mathcal{G} = (G\toto M)$ on a smooth map $q \colon X \rightarrow M$ is given by a smooth action map
$$\mathcal{A} \colon G \times_{\alpha,q} X \rightarrow X, \quad g.x \coloneq \mathcal{A}(g,x) $$
where $G \ltimes X := \{(g,x) \in G \times X\mid \alpha (g)= q(x)\}$ is the fiber product. 
We call $X$ a \emph{(left) $\mathcal{G}$-manifold} if the action map satisfies $q (g.x) = \beta (g)$, as well as $(g_1g_2).x = g_1.(g_2.x)$ and $\mathbf{1}_m . x = x$ for all $g_i \in G, x \in X, m \in M$, and whenever the composition is defined.
The map $q$ is called the \emph{moment map} of the action.

We define the \emph{action groupoid} $\mathcal{G} \ltimes X \coloneq (G\ltimes X \toto X)$ as the Lie groupoid with $\alpha_\ltimes (g,x)\coloneq x$, $\beta_\ltimes (g,x) \coloneq g.x$ and multiplication, inversion and unit map induced by the corresponding mappings in $\mathcal{G}$ \cite[1.6]{Mac}. 
\end{defn}

Note that if the Lie groupoid $\mathcal{G}$ is a Lie group, then a Lie groupoid action coincides with a Lie group action and the action groupoid just defined is the one discussed in Example \ref{exa:action}.

\begin{prop}\label{prop:findim:actiongpd}
Let $\mathcal{G}$ be a finite dimensional Lie groupoid and $X$ be a finite dimensional manifold. If $X$ is a $\mathcal{G}$-manifold, then $C^\ell (K,X)$ is a $C^\ell (K,\mathcal{G})$-manifold and we obtain an isomorphism of Lie groupoids 
$$C^\ell (K,\mathcal{G})\ltimes C^\ell (K,X) \cong C^\ell (K,\mathcal{G} \ltimes X).$$
\end{prop}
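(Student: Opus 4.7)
The plan is first to identify the arrow manifold of $C^\ell(K,\mathcal{G})\ltimes C^\ell(K,X)$ with $C^\ell(K,G\ltimes X)$ diffeomorphically, and then to verify that all groupoid structure maps match under this identification, which will be automatic since both sides are defined by pointwise application of the operations of $\mathcal{G}\ltimes X$.

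To set up the diffeomorphism, I would first observe that $G\ltimes X=\{(g,x)\in G\times X:\alpha(g)=q(x)\}$ is a closed submanifold of $G\times X$ (because $\alpha$ is a submersion between finite-dimensional manifolds) and hence itself finite-dimensional, so it admits a local addition. Thus $C^\ell(K,G\ltimes X)$ carries a canonical smooth manifold structure by Proposition~\ref{prop: can:locadd}. On the other hand, by Theorem~E(a) (or the Stacey-Roberts Lemma when $\ell=\infty$) the pushforwards $\alpha_*$ and $q_*$ are submersions, so the fiber product $C^\ell(K,G)\times_{\alpha_*,q_*}C^\ell(K,X)$ is a submanifold of $C^\ell(K,G)\times C^\ell(K,X)\cong C^\ell(K,G\times X)$, the last identification being provided by Lemma~\ref{base-cano}(d).

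Next I would consider the natural bijection
\[
\Phi\colon C^\ell(K,G\ltimes X)\to C^\ell(K,G)\times_{\alpha_*,q_*}C^\ell(K,X),\quad \gamma\mapsto(\pr_G\circ\gamma,\pr_X\circ\gamma),
\]
whose image is exactly the fiber product because $(\gamma_1,\gamma_2)$ satisfies $\alpha_*(\gamma_1)=q_*(\gamma_2)$ if and only if the combined map $K\to G\times X$ takes values in $G\ltimes X$. It follows that $C^\ell(K,G\ltimes X)$ is a submanifold of $C^\ell(K,G\times X)$, and by Lemma~\ref{base-cano}(c) the induced submanifold structure is canonical; by the uniqueness part of Lemma~\ref{base-cano}(b) it coincides with the intrinsic canonical structure, so $\Phi$ is a diffeomorphism. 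I would then apply Corollary~\ref{la-reu} to see that $\mathcal{A}_*=C^\ell(K,\mathcal{A})\colon C^\ell(K,G\ltimes X)\to C^\ell(K,X)$ is smooth, and conjugating by $\Phi$ would produce a smooth action of $C^\ell(K,\mathcal{G})$ on $C^\ell(K,X)$ with moment map $q_*$. The action axioms and the moment-map compatibility all hold because they hold pointwise on~$K$.

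Finally, I would verify that the claimed isomorphism of Lie groupoids is given by $\Phi$ on arrows and the identity on units: both source maps reduce to $(\gamma,\eta)\mapsto\eta$, both target maps to $(\gamma,\eta)\mapsto\mathcal{A}\circ(\gamma,\eta)$, and multiplication, inversion, and the unit assignment are defined pointwise, so they automatically correspond under $\Phi$. The hard part will be the diffeomorphism $\Phi$, where two a priori different smooth structures on the same underlying set, namely the submanifold structure coming from the mapping-space fiber product and the intrinsic canonical structure on $C^\ell(K,G\ltimes X)$, must be shown to agree. The finite-dimensionality hypothesis enters crucially at this point, both to ensure that $G\ltimes X$ admits a local addition and to make the Stacey-Roberts Lemma available when $\ell=\infty$.
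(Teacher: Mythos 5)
Your proof is correct and follows essentially the same route as the paper: both identify $C^\ell(K,G\ltimes X)$ set-theoretically with the fiber product $C^\ell(K,G)\times_{\alpha_*,q_*}C^\ell(K,X)$, obtain the latter as a split submanifold via Theorem~E\,(a) (resp.\ the Stacey--Roberts Lemma for $\ell=\infty$), and note that all groupoid structure maps are pointwise, so they correspond automatically. The only minor difference is in how the two smooth structures are matched: the paper invokes Proposition~\ref{prop:emb} to realise $C^\ell(K,G\ltimes X)$ as a split submanifold of $C^\ell(K,G\times X)$ and restricts the canonical isomorphism, whereas you deduce the agreement from the uniqueness of canonical manifold structures (Lemma~\ref{base-cano}\,(b),(c)); both arguments are valid.
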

  
\begin{proof}
Let $q$ be the moment map and $\mathcal{A} \colon G\ltimes X \rightarrow X$ be the action map of the groupoid action. 
By Theorem E, the fiber product $C^\ell (K, G) \ltimes C^\ell (K,X) = (\alpha_*,q_*)^{-1} (\{(f,f) \mid f \in C^\ell (K,M)\}$ is a splitting submanifold of $C^\ell (K,G) \times C^\ell (K, X)$ \cite[Theorem B]{SUB}. 
Further, we deduce from Proposition \ref{prop:emb} that $C^\ell (K,G\ltimes X)$ is a splitting submanifold of $C^\ell (K,G\times X)$.
It is easy to see that the isomorphism  $C^\ell (K,G) \times C^\ell (K,X) \cong C^\ell (K,G\times X)$ and its inverse factors through the split submanifolds.
Thus $C^\ell (K, G\ltimes X) \cong C^\ell (K,G) \ltimes C^\ell (K,X)$ as sets and also as manifolds, since smoothness is inherited by the (co-)restriction of the smooth maps to the split submanifolds. 
In particular, the push-forward of the action map $\mathcal{A}$ induces a smooth action 
\begin{align*}C^\ell (K,G)\ltimes C^\ell (K,X) \rightarrow C^\ell (K,X) , \quad (g,f) \mapsto \mathcal{A} \circ (g , f). \tag*{\qedhere}
\end{align*}
\end{proof}
A finite dimensional proper \'{e}tale Lie groupoid is locally around $x \in M$ isomorphic to an action groupoid $\mathcal{G}|_{U_x} \cong (G_x \ltimes U_x \toto U_x)$ \cite[Theorem 2.23]{Ler} (cf.\ Proposition \ref{prop:loc:actgpd}). Hence combining Example \ref{exa: restriction} and Proposition \ref{prop:findim:actiongpd} immediately yields:

\begin{cor}
Let $\mathcal{G}$ be a finite dimensional proper \'{e}tale Lie groupoid locally isomorphic to an action groupoid $G_x \ltimes U_x \toto U_x$. Then $C^\ell (K,\mathcal{G})|_{\lfloor K,U_x\rfloor_\ell}$ is isomorphic to an embedded Lie subgroupoid of the action groupoid $C^\ell (K,G_x)\ltimes C^\ell (K,U_x)$.
\end{cor}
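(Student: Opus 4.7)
The plan is to combine three ingredients already developed in the excerpt: functoriality of $C^\ell(K,-)$ as a functor on (finite-dimensional) Lie groupoids (noted in the Remark on functoriality at the start of the present subsection), Example~\ref{exa: restriction} (identifying the current groupoid of a restriction with the restriction of the current groupoid), and Proposition~\ref{prop:findim:actiongpd} (identifying the current groupoid of an action groupoid with an action groupoid of current objects). I do not foresee any substantive obstacle; the entire argument is a diagram-chase assembling these three results.

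First I would record that, since $\mathcal{G}$ is proper \'etale over a Hausdorff base, the isotropy group $G_x = (\alpha,\beta)^{-1}\{(x,x)\}$ is a finite discrete group (as already observed in the proof of Proposition~\ref{prop:loc:actgpd}). In particular $G_x$ is a zero-dimensional Lie group, and $G_x \ltimes U_x$ is a finite-dimensional Lie groupoid, so Proposition~\ref{prop:findim:actiongpd} applies to the action of $G_x$ on the finite-dimensional manifold $U_x$.

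Next, the assumed local isomorphism $\Phi\colon \mathcal{G}|_{U_x} \to G_x \ltimes U_x$ of Lie groupoids is pushed forward by the current groupoid functor to an isomorphism of Lie groupoids
\[
C^\ell(K,\Phi)\colon C^\ell(K,\mathcal{G}|_{U_x}) \to C^\ell(K,G_x \ltimes U_x).
\]
Applying Example~\ref{exa: restriction} to rewrite the domain and Proposition~\ref{prop:findim:actiongpd} to rewrite the codomain produces an isomorphism of Lie groupoids
\[
C^\ell(K,\mathcal{G})|_{\lfloor K, U_x\rfloor_\ell} \;\cong\; C^\ell(K,G_x) \ltimes C^\ell(K,U_x).
\]
Any Lie groupoid isomorphism is in particular an embedding onto an embedded Lie subgroupoid of its target, which is exactly the claim (in fact slightly improving it, as the realization is the whole action groupoid). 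The only remaining point is the verification that the three natural identifications are compatible on source, target, multiplication, inversion, and unit maps, but this is immediate by unwinding the definitions of the structural maps involved in Example~\ref{exa: restriction}, Proposition~\ref{prop:findim:actiongpd}, and $C^\ell(K,\Phi)$.
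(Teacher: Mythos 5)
Your proposal is correct and is essentially the paper's own proof: the authors likewise obtain the corollary immediately by combining Example~\ref{exa: restriction} (identifying $C^\ell(K,\mathcal{G})|_{\lfloor K,U_x\rfloor_\ell}$ with $C^\ell(K,\mathcal{G}|_{U_x})$) with Proposition~\ref{prop:findim:actiongpd} applied to the finite isotropy group $G_x$ acting on the finite-dimensional manifold $U_x$. Your parenthetical remark that the identification actually lands onto the whole action groupoid $C^\ell(K,G_x)\ltimes C^\ell(K,U_x)$ is consistent with Example~\ref{exa:action}; the stated conclusion is simply the weaker claim the authors chose to record.
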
  

\newpage
\section{Current algebroids}
In this section, we study the Lie algebroid associated to a current groupoid. Lie algebroids are infinitesimal counterparts of Lie groupoids akin to the role Lie algebras play to Lie groups. Let us first recall from \cite{BaGaJaP} the definition of an (infinite-dimensional) Lie algebroid:

\begin{defn}
Fix a locally convex vector bundle $\mathcal{A} \rightarrow M$ over a locally convex manifold together with a vector bundle morphism $a\colon \mathcal{A} \rightarrow TM$ covering the identity ($a$ is called \emph{anchor}, $(\mathcal{A},M,a)$ \emph{anchored bundle}). Note that the anchor induces a map $a \colon \Gamma (\mathcal{A}) \rightarrow \Gamma (TM)$ by post-composition.
\begin{enumerate} 
\item A Lie bracket on the anchored bundle is a skew-symmetric $\mathbb{R}$-linear map\\ $[\cdot,\cdot]_\mathcal{A} \colon \Gamma (\mathcal{A}) \times \Gamma (\mathcal{A}) \rightarrow \Gamma (\mathcal{A})$ satisfying the following conditions 
\begin{enumerate}
\item $[X,fY]_{\mathcal{A}} = f + Tf a(X)X$ for all $f\in C^\infty (M)$ and $X,Y \in \Gamma (\mathcal{A})$.
\item $[X,[Y,Z]_{\mathcal{A}}]_{\mathcal{A}} + [Z,[X,Y]_{\mathcal{A}}]_{\mathcal{A}} + [Y,[Z,X]_{\mathcal{A}}]_{\mathcal{A}} = 0$ for all $X,Y,Z\in \Gamma (\mathcal{A})$.
\end{enumerate}
A Lie bracket is \emph{localisable} if for every non-empty open set $U\subseteq M$ there is a Lie bracket $[\cdot,\cdot]_{U}$ on the restriction of the anchored bundle $\mathcal{A}|_U$ such that for $U=M$ we have $[\cdot,\cdot]_U=[\cdot,\cdot]_\mathcal{A}$
and $[X|_V,Y|_V]_V = [X,Y]_U|_V$ for all open subsets $V\subseteq U\subseteq M$
and $X,Y\in \Gamma (\mathcal{A}|_{U})$.
\item An anchored bundle with a Lie bracket is called a \emph{Lie algebroid} if the Lie bracket is localisable and the map $a \colon\Gamma(\mathcal{A}) \rightarrow \Gamma (TM)$ is a Lie algebra morphism.   
\end{enumerate}
\end{defn}

\begin{rem}\label{rem:localisation}
Localisability of the Lie bracket is a new feature which is automatic for finite-dimensional algebroids. The Lie brackets associated to a Lie groupoid (see \ref{setup: tan:alpha} below) are automatically localisable (this was proved in \cite[Theorem 4.17]{BaGaJaP} for Banach Lie algebroids, however the proof carries over to our more general situation). Hence we chose to include it in the definition. Up to now no example of a non-localisable Lie bracket is known.  
\end{rem}

\begin{numba}\label{numba:cur:alg}
Let $K$ be a compact manifold, $\ell \in \N_0 \cup \{\infty\}$ and $(\mathcal{A}, a, \LB )$ be a Lie algebroid over $M$. Assume that $M$ and $\mathcal{A}$ admit local additions.
 Then the push-forward of the bundle projection turns $C^\ell (K,\mathcal{A})$ into a vector bundle over $C^\ell (K,M)$. Using the canonical manifold structure, we see that the pointwise application of
the Lie bracket of the Lie algebroid yields a Lie bracket on the section algebra $\Gamma (C^\ell (K,M),C^\ell (K,\mathcal{A}))$ which is compatible with the push-forward of the anchor of $\mathcal{A}$. 
We obtain an anchored bundle which becomes a Lie algebroid $(C^\ell(K,\mathcal{A}), a_*, [\cdot,\cdot]_{pw})$ if the Lie bracket is localisable. A Lie algebroid of this form is called a \emph{current algebroid}.
\end{numba}

Note that for a Lie algebroid $\Lf (\mathcal{G})$ associated to a Lie groupoid $\mathcal{G}$ (to be recalled in \ref{setup: tan:alpha} below) the anchored bundle in \ref{numba:cur:alg} will always have a localisable Lie bracket. This follows a posteriori from Theorem \ref{thm: currentalg} and Remark \ref{rem:localisation}. As we are only interested in the Lie algebroid associated to current groupoids, we shall not investigate the localisability further. However, it is well known that there are Lie algebroids which do not integrate to Lie groupoids \cite{CaF} (i.e.\ are not of the form $\Lf (\mathcal{G})$) and for these we do not know whether the anchored bundle is localisable.\footnote{Note that to establish localisability of general algebroids \cite{{BaGaJaP}} requires smooth bump functions on the base. However, for $\ell <\infty$ the mapping spaces $C^\ell (K,M)$ do not admit such bump functions, see \cite[Section 14]{KaM}.}

Let us now recall how to associate such a Lie algebroid to a Lie groupoid $\mathcal{G} = (G \toto M)$, e.g.\ \cite[Section 3.5]{Mac} or \cite{BaGaJaP,SaW}.

\begin{numba} \label{setup: tan:alpha} Consider the subset
 $T^\alpha G = \bigcup_{g\in G} T_g \alpha^{-1} (\alpha (g))$ of $TG$. 
 For all $x \in T^\alpha_g G$ the definition implies
 $T\alpha (x) = 0_{\alpha (g)} \in T_{\alpha (g)} M$, i.e.\ fiber-wise we have
 $T^\alpha_g G = \ker T_g\alpha$. Since $\alpha$ is a submersion, the
 same is true for $T\alpha$. Computing in submersion charts, the kernel of
 $T_g \alpha$ is a direct summand of the model space of $TG$. Furthermore, the
 submersion charts of $T \alpha$ yield submanifold charts for $T^\alpha G$
 whence $T^\alpha G$ becomes a split submanifold of $T G$. Restricting the
 projection of $TG$, we thus obtain a subbundle
 $\pi_\alpha \colon T^\alpha G \to G$ of the tangent bundle $TG$. 
 
 An element in $\Gamma (T^\alpha G)$ is called \emph{vertical vector field}. A vertical vector field $Y$ is called \emph{right-invariant} if
 for all $(h,g) \in G \times_{\alpha, \beta} G$ the equation
 $Y(hg) = T_h (R_g)(Y(h))$ holds. Due to \cite[Lemma 3.5.5]{Mac} the set of all right invariant vector fields $\Gamma^R(T^\alpha G)$ is a Lie subalgebra of $\Gamma(TG)$.
\end{numba}

\begin{numba}[The Lie algebroid associated to a Lie groupoid]
 Define $\Lf(\mathcal{G})$ to be the pullback bundle $1^{*}T^{\alpha}G$ where $1 \colon M \rightarrow G$ is the unit embedding (we will think of the pullback $1^*$ as restriction). The
 anchor $a_{\Lf(\mathcal{G})} \colon \Lf(\mathcal{G}) \rightarrow TM$ is defined as the composite 
 \begin{displaymath}
  \Lf(G) \rightarrow T^\alpha G \xrightarrow{\subseteq} TG \xrightarrow{T\beta}
  TM.
 \end{displaymath}
 Let $g$ be an element of $G$. We define the smooth map
 $R_g \colon \alpha^{-1}(\beta (g)) \rightarrow G$, $h \mapsto hg$.  Then \cite[Corollary
 3.5.4]{Mac} (or \cite[Propositions 4.13 (i) and 4.14 (ii)]{BaGaJaP}) shows 
 \begin{equation}\label{eq: RI:VF}
  \Gamma(\Lf(\mathcal{G})) \rightarrow \Gamma^R (G),\quad X \mapsto \overrightarrow{X} , \quad \text{ with } \quad\overrightarrow{X} (g) = T(R_g) (X(\beta (g)))
 \end{equation}
 is an isomorphism of $C^\infty (G)$-modules. Its inverse is
  $\Gamma^R (G) \rightarrow \Gamma (\Lf(\mathcal{G}))$, $X \mapsto X \circ 1$. Now define the Lie bracket on $\Gamma (\Lf(\mathcal{G}))$ via
 \begin{equation}\label{eq: LB}
  [X,Y] \coloneq [\overrightarrow{X},\overrightarrow{Y}] \circ 1.
 \end{equation}
 Then $(\Lf(\mathcal{G}), M,a_{\Lf (\mathcal{G})}, [\cdot,\cdot])$ is the Lie algebroid \cite[Theorem 4.17]{BaGaJaP} associated to $\mathcal{G}$.
\end{numba}

We can now identify the Lie algebroid associated to a current groupoid.

\begin{thm}[The Lie algebroid associated to a current groupoid]\label{thm: currentalg}
In the situation of Theorem \ref{thm:A:ext} consider the current groupoid $C^\ell (K,\mathcal{G})$. Then the Lie algebroid associated to the current groupoid is canonically isomorphic to a current algebroid,
$$(\Lf (C^\ell (K,\mathcal{G}), a_{\Lf (C^\ell (K,\mathcal{G})} , \LB ) \cong (C^\ell (K,\Lf (\mathcal{G}), (a_{\Lf (\mathcal{G}})_*, [\cdot,\cdot]_{pw}),$$
where the Lie bracket is given by the pointwise application of the bracket on $\Lf (\mathcal{G})$. 
\end{thm}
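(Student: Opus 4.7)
My plan has three stages: identify the underlying vector bundle, match the anchors, and match the Lie brackets; the last is the main obstacle. For the first stage, Proposition~\ref{prop: can:locadd} gives the canonical identification $TC^\ell(K,G) \cong C^\ell(K,TG)$ (and similarly for~$M$), and the commutative diagram in~\ref{thetamp} shows that under this identification $TC^\ell(K,\alpha)$ corresponds to $C^\ell(K,T\alpha)$ (and likewise for~$\beta$). Since $\alpha$ is a submersion, $T^\alpha G = \ker T\alpha$ is a split submanifold of~$TG$, whence by Lemma~\ref{base-cano}(c) the kernel of $TC^\ell(K,\alpha)$ is precisely $C^\ell(K, T^\alpha G) \subseteq C^\ell(K, TG)$. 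Pulling back along the unit map $\mathbf{1}_* = C^\ell(K,\mathbf{1})$ then produces a vector bundle isomorphism
\[
\Phi \colon \Lf(C^\ell(K,\mathcal{G})) \xrightarrow{\;\cong\;} C^\ell(K, \Lf(\mathcal{G})).
\]
The anchor on $\Lf(C^\ell(K,\mathcal{G}))$ is $TC^\ell(K,\beta)$ restricted to this subbundle; via $\Phi$ it is identified with $C^\ell(K,T\beta)$ restricted to $C^\ell(K,T^\alpha G)$, i.e.\ with $C^\ell(K, a_{\Lf(\mathcal{G})}) = (a_{\Lf(\mathcal{G})})_*$, so $\Phi$ intertwines the two anchors.

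For bracket compatibility I first exploit functoriality of $\Lf(\cdot)$. For each $x \in K$ the evaluation $\ev_x \colon C^\ell(K,\mathcal{G}) \to \mathcal{G}$ is a smooth morphism of Lie groupoids (smoothness by Lemma~\ref{base-cano}(a); the groupoid axioms are immediate from the pointwise nature of the current groupoid structure), so it induces a Lie algebroid morphism $\Lf(\ev_x) \colon \Lf(C^\ell(K,\mathcal{G})) \to \Lf(\mathcal{G})$ which, under $\Phi$, is the pointwise evaluation $s \mapsto s(x)$. For each $\sigma \in \Gamma(\Lf(\mathcal{G}))$ the pullback section $X_\sigma := \Phi^{-1}(\sigma_*) \in \Gamma(\Lf(C^\ell(K,\mathcal{G})))$ satisfies $\Phi(X_\sigma(f)) = \sigma \circ f$ and is therefore $\ev_x$-related to $\sigma$ for every $x \in K$. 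The standard fact that algebroid morphisms preserve brackets of related sections then forces
\[
\Phi\bigl([X_{\sigma_1}, X_{\sigma_2}]_{\Lf}(f)\bigr) = [\sigma_1, \sigma_2]_{\Lf(\mathcal{G})} \circ f,
\]
which is exactly $[\Phi\circ X_{\sigma_1},\Phi\circ X_{\sigma_2}]_{pw}(f)$. Thus bracket agreement holds on all pullback sections.

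The hard part is extending this agreement to arbitrary sections of $\Lf(C^\ell(K,\mathcal{G}))$. I form the difference $B(X_1, X_2) := [X_1, X_2]_{\Lf} - [X_1, X_2]_{pw}$; since both brackets are antisymmetric, $\R$-bilinear, and satisfy Leibniz with the common anchor $(a_{\Lf(\mathcal{G})})_*$, the map $B$ is $C^\infty(C^\ell(K,M))$-bilinear and hence tensorial, and it vanishes on pullback sections by the previous paragraph. To conclude $B = 0$ on all sections I will use localisability of both brackets (assumed for the current algebroid; automatic on the groupoid side by Remark~\ref{rem:localisation}) together with the identification $C^\ell(K,\mathcal{G})|_{\lfloor K,U\rfloor_\ell} = C^\ell(K,\mathcal{G}|_U)$ of Example~\ref{exa: restriction} to restrict to open sets $\lfloor K,U\rfloor_\ell$ over which $\Lf(\mathcal{G})|_U$ is trivial. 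In such a trivialisation I will compute $[\overrightarrow{X_1},\overrightarrow{X_2}]$ on $C^\ell(K,G)$ at a unit $\mathbf{1}_*f$ via the standard Lie bracket formula $[V,W]=DW\cdot V - DV\cdot W$ in a chart modelled on $C^\ell(K,E)$. The pointwise form of right translation $R_g$ and of the tangent identification of the first paragraph should then exhibit this chart computation as reproducing, pointwise in $x\in K$, the local structure functions of $\Lf(\mathcal{G})$ at $f(x)$, which are exactly the data that define $[\cdot,\cdot]_{pw}$. Carrying out this chart computation carefully in the locally convex setting, and verifying that tensoriality of $B$ reduces the check to values in the fibers of $\Lf(\mathcal{G})$ (where pullback sections span by virtue of the trivialisation), is where the main technical work lies.
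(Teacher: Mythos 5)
Your identification of the underlying vector bundle and of the anchor coincides with the paper's (both rest on Theorem~\ref{thetabu} and the diagram \eqref{eq:commdiag}), and your use of the evaluation morphisms $\ev_x$ to obtain bracket compatibility on pullback sections is sound as far as it goes. The gap lies in the passage from pullback sections to arbitrary sections of $\Lf(C^\ell(K,\mathcal{G}))$. First, deducing tensoriality of $B$ from $C^\infty(C^\ell(K,M))$-bilinearity requires smooth bump functions on $C^\ell(K,M)$, which do not exist for $\ell<\infty$ (see the footnote to \ref{numba:cur:alg} citing \cite[Section 14]{KaM}); localisability lets you restrict to open sets of $C^\ell(K,M)$ but does not by itself yield pointwise dependence of $B$ on the values of its arguments. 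Second, and more seriously, even granting tensoriality the conclusion $B=0$ would require the values of pullback sections to span each fibre of the current algebroid, and they do not: the fibre over $f\in C^\ell(K,M)$ is the infinite-dimensional space $\Gamma_f(\Lf(\mathcal{G}))$ of sections along $f$, whereas pullback sections only realize elements of the form $\sigma\circ f$ --- for a constant map $f$ these are just the \emph{constant} elements of $\Gamma_f(\Lf(\mathcal{G}))$. Your fallback chart computation also understates the difficulty when it asserts that the local structure functions ``are exactly the data that define $[\cdot,\cdot]_{pw}$'': the algebroid bracket is a first-order differential operator, and for non-pullback sections the derivative terms $a(X)\cdot Y$, taken on the infinite-dimensional base $C^\ell(K,M)$, are precisely what must be matched with their pointwise counterparts.

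The paper closes this gap by a different mechanism: it invokes the characterization \eqref{eq: RI:VF}, \eqref{eq: LB} of the algebroid bracket through right-invariant vector fields. Because right translation in $C^\ell(K,\mathcal{G})$ acts pointwise, the right-invariant extension $\overrightarrow{X}$ of an \emph{arbitrary} section $X$ and the right-invariance condition itself are pointwise notions, so the bracket induced on $C^\ell(K,\Lf(\mathcal{G}))$ is identified with the pointwise one for all sections simultaneously, with no spanning or density argument. If you wish to retain the evaluation-functoriality route, you must supplement it with this (or an equivalent) device; the pullback-section computation alone does not determine the bracket.
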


\begin{proof}
By Theorem \ref{thetabu} $TC^\ell (K,G) \cong C^\ell (K,TG)$ holds. Then \eqref{eq:commdiag} lets us conclude that $T^{\alpha_*} C^\ell (K,G) = \bigcup_{f\in C^{r} (K,G)} \text{Ker}\, (T_f \alpha_*) \cong C^\ell (K,T^\alpha G)$. 
Thus
\begin{equation}\label{ident: algebroid}
\Lf (C^\ell (K,\mathcal{G})) \cong (1_*)^* T^{\alpha_*} C^\ell (K,G) \cong C^\ell (K, 1^* (T^\alpha G)) = C^\ell (K,\Lf (\mathcal{G}))
\end{equation}
Analogously \eqref{eq:commdiag} yields $a_{\Lf (C^\ell (K,\mathcal{G}))} = (a_{\Lf (\mathcal{G})})_* \colon C^\ell (K,\Lf (\mathcal{G})) \rightarrow C^\ell (K,TM)$.
Finally, we observe that the Lie groupoid operations of the current groupoid are given by pointwise application of the groupoid operations of $\mathcal{G}$. Thus a section of the bundle $C^\ell (K, T^\alpha G) \rightarrow C^\ell (K,G)$ is right invariant if and only if it satisfies the right invariance property pointwise. We conclude from \eqref{ident: algebroid} together with \eqref{eq: RI:VF} and \eqref{eq: LB} that the Lie bracket on
$C^\ell(K,\Lf (\mathcal{G})$ induced from the Lie algebroid $\Lf (C^\ell (K,\mathcal{G})$ is given by the pointwise application bracket $[\cdot , \cdot ]_{\Lf (\mathcal{G}}$. 
Summing up, we can identify $\Lf (C^\ell (K,\mathcal{G}))$ with the Lie algebroid $(C^\ell(K,\Lf (\mathcal{G}), (a_{\Lf (\mathcal{G})_*}, [\cdot,\cdot]_{pw})$ where $[\cdot,\cdot]_{pw}$ is the pointwise Lie bracket.
Thus the Lie algebroid associated to a current groupoid is a current algebroid.
\end{proof}

The construction of the current algebroid in Theorem \ref{thm: currentalg} recovers the
construction of current algebras.

\begin{rem}\label{rem:shiftsign}
A locally convex topological Lie algebra of the form
$C^\ell (K,{\mathfrak h})$ with pointwise Lie bracket, where ${\mathfrak h}$
is a locally convex topological Lie algebra, is called a \emph{current algebra}.

As was noted in \cite[Warning after 1.7]{SaW}, a
Lie group $H$ with Lie algebra $\Lf (H)$ gives rise to a
Lie groupoid $H\toto \{\star\}$ over the one point manifold, but $\Lf (H \toto \{\star\}) \neq \Lf (H) \toto \{\star\}$. The reason for this is that due to conventions the Lie bracket of one of these Lie algebras is the negative of the other.
Thus the current algebra $C^\ell (K, \Lf(H) \toto \{\star\})$ of \cite{NaW} is only anti-isomorphic to $C^\ell (K, \Lf (H\toto \{\star\}))$.  
\end{rem}

We have restricted ourselves to compact $K$ in this section. For $\ell=\infty$ and $K$ without rough boundary (but possibly with smooth boundary and non compact) one can obtain a similar identification of the current algebroid if $\mathcal{G}$ is a Banach Lie groupoid.

To see this, note that the identification becomes $TC^\infty (K,G) \cong \mathcal{D} (K,TG)$ (where $\mathcal{D}$ denotes smooth mappings constant outside some compact set, \cite[Section 10]{Mic}). Then the above proof carries over using the results contained in \cite[Sections 10 and 11]{Mic}. We chose to suppress this more complicated case. 
\paragraph{Acknowledgements} The authors wish to thank D.M.\ Roberts (Adelaide) for useful comments which helped improve this article. H.A.\ was supported by a travel grant of the University of Zanjan.
Further funding was provided by DFG grant GL 357/9-1.
H.A.\ and A.S.\ wish to thank H.G.\ and the mathematical institute in Paderborn for their hospitality during their stay in summer 2018. 
\newpage
\appendix
\section{Manifold structure on \texorpdfstring{{\boldmath$C^\ell(K,M)$}}{differentiable mappings}}\label{map-mfd}
In this appendix, we present a construction of the canonical manifold structure on the spaces
$C^\ell (K,M)$ for $K$ a compact manifold (possibly with rough boundary),
$\ell\in \N_0\cup\{\infty\}$, and $M$ a (possibly infinite-dimensional) smooth manifold
which admits a local addition (in the sense recalled in Definition~\ref{def-loa}).
Constructions of the manifold structure
are well known in special cases; see \cite{Eel,Mic,Ham,KaM} (for $\ell=\infty$ and $K$ a manifold with corners), \cite{RaS} (for $\ell = \infty$ and $K$ with rough boundary), and \cite{Wit}
(for $K$ without boundary, finite~$\ell$ and $M$ of finite dimension).
In all approaches mentioned (with the exception of \cite{RaS}), the construction hinges on a version of the
so-called $\Omega$-Lemma. This result is not currently available for manifolds with rough boundary\footnote{But will be contained in \cite{GaN}.}. However, as suggested
by the work of Michor and carried out in \cite[Section 5]{RaS}, one can circumvent this problem for compact source manifolds by using exponential laws for $C^{k,\ell}$-functions (as provided in \cite{AaS}).
We work out this approach here and mention that the authors are not aware of a published source for the results if $\ell < \infty$,
in the current generality.
\subsection*{Pullbacks of vector bundles and their spaces of sections}
The smooth manifold structure on $C^\ell(K,M)$ we strive to construct
is modelled on spaces of $C^\ell$-sections of certain pullback bundles
$f^*(TM)$. We therefore give some explanations concerning such pullbacks
and their sections, before turning to $C^\ell(K,M)$.
\begin{numba}\label{bundleconv}
Let $M$ be a smooth manifold, $\ell\in\N_0\cup\{\infty\}$
and $K$ be a $C^\ell$-manifold (possibly with rough boundary).
If $\pi\colon E\to M$ is a smooth vector bundle over~$M$
and $f\colon K\to M$ is a $C^\ell$-map, then
\[
f^*(E):=\bigcup_{x\in K}\{x\}\times E_{f(x)}
\]
is a submanifold of the $C^\ell$-manifold $K\times E$
(as it locally looks like $\mbox{graph}(f)\times E_x$
inside $K\times M\times E_x$ around points in $\{x\}\times E_x$).
We endow $f^*(E)$ with the submanifold structure.
Together with the natural vector space structure on $\{x\}\times E_{f(x)}\cong E_{f(x)}$
and the map $\pi_f\colon f^*(E)\to K$, $(x,y)\mto x$,
we obtain a $C^\ell$-vector bundle $f^*(E)$ over~$K$,
the so-called \emph{pullback of $E$ along~$f$}.
For each local trivialization $\theta=(\pi|_{E|_U},\theta_2)
\colon E|_U\to U\times F$ of~$E$
and $W:=f^{-1}(U)$, the map
\[
f^*(E)|_W \to W\times F,\quad (x,y)\mto (x,\theta_2(y))
\]
is a local trivialization of $f^*(E)$.
We endow
\[
\Gamma_f:=\{\tau\in C^\ell(K,E)\colon \pi\circ \tau=f\}
\]
with the topology induced by $C^\ell(K,E)$. With pointwise operations,
$\Gamma_f$ is a vector space and the map
\[
\Psi\colon \Gamma_{C^\ell}(f^*(E))\to\Gamma_f,\;\, \sigma\mto \pr_2\circ \, \sigma
\]
is a bijection with inverse $\tau\mto (\id_K,\tau)$.
As $C^\ell(K,\pr_2)\colon C^\ell(K,K\times E)\to C^\ell(K,E)$ is a continuous map and also $\tau\mto (\id_K,\tau)\in
C^\ell(K,K) \times C^\ell(K,E)\cong C^\ell(K,K\times E)$ is continuous,
we deduce that $\Gamma_f$ is a locally convex topological vector space and
$\Psi$ is an isomorphism of topological vector spaces.
If we wish to emphasize the dependence on~$E$, we also write $\Gamma_f(E)$
instead of $\Gamma_f$.
\end{numba}
The following Exponential Law is essential for us, in the preceding situation.
\begin{la}\label{expsect}
Let $k\in \N_0\cup\{\infty\}$ and $g\colon N\to \Gamma_f$ be a map,
where $N$ is a $C^k$-manifold $($possibly with rough boundary$)$.
If $K$ is locally compact,
then $g$ is $C^k$ if and only if
\[
g^\wedge\colon N\times K\to E,\quad (x,y)\mto g(x)(y)
\]
is a $C^{k,\ell}$-map.
\end{la}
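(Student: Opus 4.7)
The plan is to reduce the lemma to the exponential law for $C^{k,\ell}$-maps from~\cite{AaS}, passing through the pullback bundle $f^*(E)\to K$ introduced in~\ref{bundleconv}.

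First, I would replace $\Gamma_f$ by $\Gamma_{C^\ell}(f^*(E))$ using the isomorphism of locally convex spaces $\Psi\colon \Gamma_{C^\ell}(f^*(E))\to \Gamma_f$, $\sigma\mto \pr_2\circ \sigma$ from~\ref{bundleconv} (its inverse being $\tau\mto (\id_K,\tau)$). Then $g$ is $C^k$ iff $\tilde g:=\Psi^{-1}\circ g\colon N\to \Gamma_{C^\ell}(f^*(E))$ is $C^k$, and one has $\tilde g^{\wedge}(x,y)=(y,g^{\wedge}(x,y))\in f^*(E)\sub K\times E$. Over each open $U\sub K$ trivializing $f^*(E)|_U$ (say via a $C^\ell$-bundle chart $\Theta_U\colon f^*(E)|_U\to U\times F_U$) the section space $\Gamma_{C^\ell}(f^*(E)|_U)$ is isomorphic as a locally convex space to $C^\ell(U,F_U)$. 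By Lemma~\ref{top-cover}, the topology on $\Gamma_{C^\ell}(f^*(E))$ is initial with respect to the restrictions $\rho_U\colon \sigma\mto \sigma|_U$ for such $U$, so $\tilde g$ is $C^k$ iff each composition $N\to C^\ell(U,F_U)$ is $C^k$ over a trivializing cover of~$K$.

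The exponential law from~\cite{AaS} --- applicable because $K$, and hence each $U$, is locally compact --- identifies the latter condition with the $C^{k,\ell}$-property of the associated maps $N\times U\to F_U$. These local $C^{k,\ell}$-statements assemble into the assertion that $\tilde g^{\wedge}\colon N\times K\to f^*(E)$ is $C^{k,\ell}$, the $C^{k,\ell}$-property being local in the $K$-argument. Finally, since $f^*(E)$ is a submanifold of $K\times E$ by~\ref{bundleconv}, \ref{Ckellinto} shows that $\tilde g^{\wedge}$ is $C^{k,\ell}$ iff the corresponding map into $K\times E$ is $C^{k,\ell}$, which in turn is equivalent to $g^{\wedge}\colon N\times K\to E$ being $C^{k,\ell}$ (the other coordinate $(x,y)\mto y$ being trivially of class $C^{k,\ell}$).

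The main obstacle I anticipate is invoking the exponential law from~\cite{AaS} in precisely the form needed --- for maps into $C^\ell(U,F)$ with $U$ an open subset of the locally compact manifold $K$ and $F$ a locally convex space --- together with the gluing step that promotes local $C^{k,\ell}$-statements over a trivializing cover into a global one. Both ingredients are available with the tools of~\cite{AaS} and Section~\ref{sec:prelim}, but require some care to piece together.
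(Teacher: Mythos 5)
Your overall strategy---localize over a cover of $K$ trivializing the bundle, apply the exponential law of \cite{AaS} fibrewise, and reassemble---is exactly the route the paper takes, and your steps 1--3 are essentially a reformulation of the paper's maps $\psi_j\colon\tau\mto\theta_{j,2}\circ\tau|_{K_j}$ (your local trivializations of $\Gamma_{C^\ell}(f^*(E))$ composed with $\Psi^{-1}$ are precisely these). The genuine problem is the final assembly step. You claim that the local statements combine into ``$\tilde g^{\wedge}\colon N\times K\to f^*(E)$ is $C^{k,\ell}$'' and then invoke \ref{Ckellinto}. But $f^*(E)$ carries only a $C^\ell$-manifold structure: its submanifold charts inside $K\times E$ are built from the graph of $f$, and $f$ is merely $C^\ell$. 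A $C^{k,\ell}$-map on a product of a $C^k$- and a $C^\ell$-manifold must take values in a $C^{k+\ell}$-manifold, so for $k>0$ the assertion ``$\tilde g^{\wedge}$ is $C^{k,\ell}$ into $f^*(E)$'' is not even meaningful, and \ref{Ckellinto} does not apply because $f^*(E)$ is not a $C^{k+\ell}$-submanifold of the smooth manifold $K\times E$. The paper flags exactly this obstruction in the remark following the lemma: ``$C^{k,\ell}$-maps to $f^*(E)$ \dots\ do not make sense, whence an exponential law for $\Gamma_{C^\ell}(f^*(E))$ would not make sense in na\"{\i}ve form,'' which is why it works with $\Gamma_f\subseteq C^\ell(K,E)$ throughout.

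The repair is to glue inside $E$ rather than inside $f^*(E)$: for a trivialization $\theta_j=(\pi,\theta_{j,2})\colon E|_{U_j}\to U_j\times F_j$ with $f(K_j)\subseteq U_j$, the map $(x,y)\mto\theta_j(g^\wedge(x,y))=(f(y),\theta_{j,2}(g^\wedge(x,y)))$ is $C^{k,\ell}$ if and only if $g^\wedge|_{N\times K_j}$ is $C^{k,\ell}$ into the smooth manifold $E|_{U_j}$ (as $\theta_j$ is a $C^\infty$-diffeomorphism), and the first component causes no trouble since $(x,y)\mto f(y)$ depends only on $y$ and is $C^\ell$, hence $C^{k,\ell}$. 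A secondary point you gloss over: to conclude that $\tilde g$ is $C^k$ from the $C^k$-property of its local components, one needs more than the topology on the section space being initial; the paper additionally verifies that $\psi=(\psi_j)_{j\in J}$ embeds $\Gamma_f$ onto a \emph{closed} vector subspace of $\prod_j C^\ell(K_j,F_j)$ (the compatible families), which is what licenses testing $C^k$-ness componentwise via \cite[Lemmas 1.4.5 and 1.4.15]{GaN}.
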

\begin{proof}
Let $(K_j)_{j\in J}$ be a cover of~$K$ by open sets $K_j$
such that $f(K_j)\subseteq U_j$ for an open subset $U_j\subseteq M$ such that
$E|_{U_j}$ is trivializable; let $\theta_j\colon E|_{U_j}\to U_j\times F_j$
be a local trivialization.
Write $\theta_j=(\pi,\theta_{j,2})$ with a smooth map $\theta_{j,2}\colon E|_{U_j}\to F_j$.
By Lemma~\ref{top-cover}, the topology on $\Gamma_f$ is initial with respect to the maps
\[
\rho_j\colon\Gamma_j\to C^\ell(K_j,E),\quad \tau\mto\tau|_{K_j}
\]
for $j\in J$. We may regard these as maps to $C^\ell(K_j,E|_{U_j})$
as $\tau(K_j)\subseteq E|_{U_j}$ for each $\tau\in\Gamma_f$ (cf.\ Lemma~\ref{embpfwd}).
As the map $C^\ell(K_j,\theta_j)\colon C^\ell(K_j,E|_{U_j})\to C^\ell(K_j,U_j\times E_j)
\cong C^\ell(K_j,U_j)\times C^\ell(K_j,F_j)$ is a homeomorphism,
the topology on $\Gamma_f$ is also initial with respect to the maps
$
C^\ell(K_j,\theta_j)\circ\rho_j\colon \Gamma_f\to C^\ell(K_j,U_j)\times C^\ell(K_j,F_j)
$
sending $\tau\in\Gamma_f$ to
\begin{equation}\label{const}
(f|_{K_j},\theta_{j,2}\circ \tau|_{K_j}).
\end{equation}
As the first component in (\ref{const}) is independent of $\tau$,
the topology on $\Gamma_f$ is also initial with respect to the mappings
\[
\psi_j\colon \Gamma_f\to C^\ell(K_j,F_j),\quad \tau\mto \theta_{j,2}\circ\tau|_{K_j}
\]
for $j\in J$, which are linear maps. As a consequence, the map
\[
\psi:=(\psi_j)_{j\in J}\colon\Gamma_f\to\prod_{j\in J}C^\ell(K_j,F_j)
\]
is linear and a topological embedding. The vector subspace
\begin{eqnarray*}
\lefteqn{\{(\gamma_j)_{j\in J}\in\prod_{j\in J}C^\ell(K_j,F_j)\colon}\qquad\qquad\\
&& (\forall i,j\in J)(\forall x\in K_i\cap K_j)\;
\gamma_i(x)=\theta_{i,2}(\theta_j^{-1}(f(x),\gamma_j(x)))\}
\end{eqnarray*}
of $\prod_{j\in J}C^\ell(K_j,F_j)$
is closed and coincides with the image of~$\psi$. [It clearly contains the image
and given $(\gamma_j)_{j\in J}$, we have $(\gamma_j)_{j\in J}=\psi(\tau)$
if we set $\tau(x):=\theta_j^{-1}(f(x),\gamma_j(x))$ for $j\in J$ and $x\in K_j$.]\\[2.3mm]
As a consequence, a map $g\colon N\to \Gamma_f$
as in the lemma is $C^k$ if and only if $\psi_j\circ g\colon N\to C^\ell(K_j,F_j)$ is
$C^k$ for all $j\in J$ (see \cite[Lemmas 1.4.5 and 1.4.15]{GaN}; cf.\ \ref{into-sub}).
By the Exponential Law \cite[Theorem~4.6]{AaS},
the latter holds if and only if
\[
(\psi_j\circ g)^\wedge\colon N\times K_j\to F_j,\quad (x,y)\mto \theta_{j,2}(g(x)(y))
\]
is $C^{k,\ell}$ for each $j\in J$. The latter holds if and only if
\[
N\times K_j\to U_j\times F_j,\quad (x,y)\mto (f(y),\theta_{j,2}(g(x)(y)))=\theta_j(g^\wedge(x,y))
\]
is $C^{k,\ell}$ for all $j\in J$. This in turn holds if and only if $g^\wedge|_{N\times K_j}\colon
N\times K_j\to E|_{U_j}\subseteq E$ is $C^{k,\ell}$
for all $j\in J$, which holds if and only if $g^\wedge$ is $C^{k,\ell}$.
\end{proof}
\begin{rem}
(a) Note that $C^{k,\ell}$-maps to $f^*(E)$ (which is only a $C^\ell$-manifold)
do not make sense, whence an exponential law for $\Gamma_{C^\ell}(f^*(E))$
would not make sense in na\"{\i}ve form. Because we do have an exponential law
for $\Gamma_f$, it is essential for us to work with $\Gamma_f$ rather than
$\Gamma_{C^\ell}(f^*(E))$.

(b) Dropping the hypothesis of local compactness, we might
assume instead, e.g., that $N\times K$ is metrizable in Lemma~\ref{expsect},
as the Exponential Law \cite[Theorem 4.6]{AaS} 
also applies in this situation (further variants involve $k$-spaces, see \emph{loc.cit.}).\smallskip

(c) If all fibers of~$E$ are Fr\'{e}chet spaces and $K$ is $\sigma$-compact
and locally compact, then $\Gamma_f$ is a Fr\'{e}chet space;
if all fibers of~$E$ are Banach spaces, $K$ is compact and $\ell<\infty$,
then $\Gamma_f$ is a Banach space. To see this, take
a countable (resp., finite) family $(K_j)_{j\in J}$ of compact subsets $K_j\subseteq K$ (instead of open ones)
whose interiors cover~$K$
with $f(K_j)\subseteq U_j$ as in the proof of Lemma~\ref{expsect},
such that $K_j$ admits a smooth manifold structure with rough boundary
making the inclusion map $K_j\to K$ a smooth map and such that $K_j$ and $K$
induce the same smooth manifold structure on the interior $K_j^0$ relative~$K$
(e.g., the $K_j$ can be chosen as preimages of compact convex sets with non-empty interior
under charts of~$K$).
Then
\[
\psi\colon \Gamma_f\to\prod_{j\in J}C^\ell(K_j,F_j),\quad \tau\mto (\theta_{j,2}\circ\tau|_{K_j})_{j\in J}
\]
is linear and a topological embedding with closed image. If all $F_j$ are Fr\'{e}chet
spaces, so is each $C^\ell(K_j,F_j)$ (see, e.g., \cite{GaN}) and hence also $\Gamma_f$. If all $F_j$ are Banach spaces and
$\ell$ as well as $J$ is finite, then each $C^\ell(K_j,F_j)$ is a Banach space
(\emph{loc.\ cit.})
and hence also $\Gamma_f$.
\end{rem}
\begin{la}\label{evalsect}
If $K$ is locally compact in the preceding situation,
then the evaluation map
\[
\ve\colon\Gamma_f\times K\to E,\quad (\tau,x)\mto \tau(x)
\]
is $C^{\infty,\ell}$.
\end{la}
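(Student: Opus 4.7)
The statement is essentially immediate from the exponential law established in Lemma~\ref{expsect}. The plan is to recognize the evaluation map $\ve$ as the ``wedge'' associated to the identity map on $\Gamma_f$, and then to invoke Lemma~\ref{expsect} to transfer the (trivial) smoothness of the identity into joint $C^{\infty,\ell}$-regularity of $\ve$.

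More precisely, since $\Gamma_f$ is a locally convex topological vector space (as observed in~\ref{bundleconv}), it carries a canonical smooth manifold structure modelled on itself, and the identity map $\id_{\Gamma_f}\colon \Gamma_f\to \Gamma_f$ is trivially $C^\infty$. Its associated map
\[
(\id_{\Gamma_f})^\wedge\colon \Gamma_f\times K\to E,\quad (\tau,x)\mto \id_{\Gamma_f}(\tau)(x)=\tau(x),
\]
is precisely $\ve$. Applying Lemma~\ref{expsect} with $N\coloneq \Gamma_f$ and $k\coloneq \infty$ (which is permissible since $K$ is assumed locally compact), we conclude from the $C^\infty$-smoothness of $\id_{\Gamma_f}$ that $\ve=(\id_{\Gamma_f})^\wedge$ is $C^{\infty,\ell}$. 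There is no real obstacle here; the entire content has been packaged into Lemma~\ref{expsect}, and the only point worth checking is that the local compactness hypothesis on~$K$ (required for that lemma) is already in force.
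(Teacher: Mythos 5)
Your argument is exactly the paper's own proof: the paper likewise observes that $\id_{\Gamma_f}$ is $C^\infty$ and that $\ve=\id^\wedge$, then applies Lemma~\ref{expsect} with $k=\infty$. Correct and identical in approach.
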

\begin{proof}
Since $\id\colon \Gamma_f\to\Gamma_f$, $\tau\mto \tau$ is a $C^\infty$-map,
$\id^\wedge\colon\Gamma_f\times K\to E$, $(\tau,x)\mto\id(\tau)(x)=\tau(x)=\ve(\tau,x)$
is $C^{\infty,\ell}$, by Lemma~\ref{expsect}.
\end{proof}
\begin{rem}
The conclusions of Lemmas~\ref{expsect} and \ref{evalsect}
remain valid if $K$ is only a $C^\ell$-manifold
(possibly with rough boundary) and $N$ as well as the vector bundle
$\pi\colon E\to M$ are only $C^{k+\ell}$; however, we shall not need the added
generality.\footnote{The evaluation $\ve$ will be $C^{k,\ell}$ then
and hence $C^{\infty,\ell}$, being linear in its first argument
(cf.\ \cite[Lemma~3.14]{AaS}).}
\end{rem}
\begin{la}\label{Gammfunct}
Let $\pi_1\colon E_1 \to M$ and $\pi_2\colon E_2\to M$ be smooth vector bundles
over a smooth manifold~$M$. Let $\ell\in\N_0\cup\{\infty\}$ and $f\colon K\to M$
be a $C^\ell$-map from a $C^\ell$-manifold~$K$ $($possibly with rough boundary$)$ to~$M$.
Then the following holds:
\begin{itemize}
\item[\textup{(a)}]
If $\psi\colon E_1\to E_2$ is a mapping of smooth vector bundles over~$\id_M$,
then $\psi\circ\tau\in\Gamma_f(E_2)$ for each $\tau\in\Gamma_f(E_1)$ and
\[
\Gamma_f(\psi)\colon \Gamma_f(E_1)\to\Gamma_f(E_2),\quad \tau\mto \psi\circ \tau
\]
is a continuous linear map.
\item[\textup{(b)}]
$\Gamma_f(E_1\oplus E_2)$ is canonically isomorphic to $\Gamma_f(E_1)\times \Gamma_f(E_2)$.
\end{itemize}
\end{la}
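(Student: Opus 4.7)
\medskip

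\textbf{Plan of proof.} For part~(a), I would first verify that $\psi\circ\tau$ lies in $\Gamma_f(E_2)$: since $\psi$ covers $\id_M$, we have $\pi_2\circ\psi=\pi_1$, hence $\pi_2\circ(\psi\circ\tau)=\pi_1\circ\tau=f$. Next, $\Gamma_f(\psi)$ is linear because $\psi$ is linear on each fibre, so the identity $\psi(\tau_1(x)+c\tau_2(x))=\psi(\tau_1(x))+c\psi(\tau_2(x))$ holds pointwise. For continuity, I would invoke Lemma~\ref{embpfwd} applied to the smooth (hence~$C^\ell$) vector bundle morphism~$\psi$: this yields continuity of
\[
C^\ell(K,\psi)\colon C^\ell(K,E_1)\to C^\ell(K,E_2).
\]
Since by definition (\ref{bundleconv}) the topology on $\Gamma_f(E_i)$ is induced from $C^\ell(K,E_i)$, the restriction $\Gamma_f(\psi)=C^\ell(K,\psi)|_{\Gamma_f(E_1)}^{\Gamma_f(E_2)}$ is continuous linear.

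For part~(b), I would construct the canonical isomorphism explicitly. The vector bundle projections $p_i\colon E_1\oplus E_2\to E_i$ are smooth vector bundle morphisms over~$\id_M$, so part~(a) supplies continuous linear maps $\Gamma_f(p_i)\colon \Gamma_f(E_1\oplus E_2)\to\Gamma_f(E_i)$; combining them yields a continuous linear map
\[
\Phi\colon \Gamma_f(E_1\oplus E_2)\to \Gamma_f(E_1)\times\Gamma_f(E_2),\quad
\tau\mto (p_1\circ\tau,\, p_2\circ\tau).
\]
This $\Phi$ is bijective on the algebraic level, since the fibrewise decomposition $(E_1\oplus E_2)_x=(E_1)_x\oplus (E_2)_x$ furnishes a pointwise inverse $(\tau_1,\tau_2)\mapsto \tau_1\oplus\tau_2$.

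To build a continuous inverse, I would identify $E_1\oplus E_2$ with the fibre product $E_1\times_M E_2$, a submanifold of $E_1\times E_2$. Given $(\tau_1,\tau_2)\in\Gamma_f(E_1)\times\Gamma_f(E_2)$, the map $(\tau_1,\tau_2)\colon K\to E_1\times E_2$ is~$C^\ell$ (since its components are) and lands in $E_1\times_M E_2$ because $\pi_1\circ\tau_1=f=\pi_2\circ\tau_2$; by~\ref{into-sub} its corestriction to $E_1\oplus E_2$ is~$C^\ell$, and evidently a section along~$f$. Continuity of the assignment $(\tau_1,\tau_2)\mto\tau_1\oplus\tau_2$ follows because the inclusion $E_1\oplus E_2\hookrightarrow E_1\times E_2$ induces a topological embedding $C^\ell(K,E_1\oplus E_2)\hookrightarrow C^\ell(K,E_1\times E_2)\cong C^\ell(K,E_1)\times C^\ell(K,E_2)$ by Lemma~\ref{embpfwd} (together with Lemma~\ref{base-cano}\,(d) at the level of underlying topologies, or more elementarily the product description of the compact-open $C^\ell$-topology). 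Hence $\Phi^{-1}$ is continuous linear, giving the claimed isomorphism.

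The only mildly technical point is the continuity of~$\Phi^{-1}$ in part~(b); this is handled by the submanifold identification $E_1\oplus E_2\cong E_1\times_M E_2\subseteq E_1\times E_2$ together with \ref{into-sub} and Lemma~\ref{embpfwd}. Everything else is bookkeeping with the definition of $\Gamma_f$ given in~\ref{bundleconv}.
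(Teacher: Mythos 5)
Your proof is correct. Part (a) is essentially the paper's argument: $\pi_2\circ\psi=\pi_1$ gives $\psi\circ\tau\in\Gamma_f(E_2)$, linearity is checked pointwise, and continuity follows because $\Gamma_f(\psi)$ is a restriction of the continuous map $C^\ell(K,\psi)$ (for $\ell=0$ one should cite the remark on $C(M,f)$ preceding Lemma~\ref{embpfwd} rather than that lemma itself, whose hypotheses start at $k\in\N$; this is cosmetic). In part (b) you use the same forward map $(\Gamma_f(p_1),\Gamma_f(p_2))$ as the paper, but you construct the inverse by a genuinely different route. The paper stays inside the algebra already set up: the insertions $\iota_j\colon E_j\to E_1\oplus E_2$ are bundle morphisms over $\id_M$, so part (a) makes $\Gamma_f(\iota_j)$ continuous linear, and the inverse is $(\sigma,\tau)\mapsto\Gamma_f(\iota_1)(\sigma)+\Gamma_f(\iota_2)(\tau)$, continuous because addition in the locally convex space $\Gamma_f(E_1\oplus E_2)$ is continuous. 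You instead realize $E_1\oplus E_2$ as the fibre product $E_1\times_M E_2\subseteq E_1\times E_2$ and invoke \ref{into-sub} together with the embedding part of Lemma~\ref{embpfwd} to see that $(\tau_1,\tau_2)\mapsto\tau_1\oplus\tau_2$ is a well-defined continuous map into $\Gamma_f(E_1\oplus E_2)$. Both arguments are valid: the paper's is leaner (it reuses part (a) twice and needs no submanifold considerations), while yours makes explicit the topological identification of $C^\ell(K,E_1\oplus E_2)$ with a subspace of $C^\ell(K,E_1)\times C^\ell(K,E_2)$, which is the geometric content underlying the same fact.
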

\begin{proof}
(a) If $\tau\in \Gamma_f(E_1)$, then $\psi\circ\tau\colon K\to E_2$ is $C^\ell$ and $\pi_2\circ
\psi\circ \tau =\pi_1\circ\tau=f$,
whence $\psi\circ \tau\in \Gamma_f(E_2)$. Evaluating at points we see that the map
$\Gamma_f(\psi)$ is linear; being a restriction of the continuous map $C^\ell(K,\psi)\colon
C^\ell(K,E_1)\to C^\ell(K,E_2)$, it is continuous.

(b) If $\rho_j\colon E_1\oplus E_2\to E_j$
is the map taking $(v_1,v_2)\in E_1 \times E_2$ to $v_j$ for $j\in \{1,2\}$
and $\iota_j\colon E_j\to E_1\oplus E_2$ is the map taking $v_j\in E_j$
to $(v_1,0)$ and $(0,v_2)$, respectively, then
\[
(\Gamma_f(\rho_1),\Gamma_f(\rho_2))\colon \Gamma_f(E_1\oplus E_2)\to\Gamma_f(E_1)\times
\Gamma_f(E_2)
\]
is a continuous linear map
which is a homeomorphism as it has the continuous map $(\sigma,\tau)\mto \Gamma_f(\iota_1)(\sigma)+
\Gamma_f(\iota_2)(\tau)$ as its inverse.
\end{proof}
\subsection*{Construction of the canonical manifold structure}
We now construct the canonical manifold structure on $C^\ell(K,M)$,
assuming that~$M$ admits a local addition.We recall the concept.
\begin{defn}\label{def-loa}
Let $M$ be a smooth manifold.
A \emph{local addition} is a smooth map
\[
\Sigma \colon U \to M,
\]
defined on an open neighborhood $U \subseteq TM$ of the zero-section
$0_M:=\{0_p\in T_pM\colon p\in M\}$
such that $\Sigma(0_p)=p$ for all $p\in M$,
\[
U':=\{(\pi_{TM}(v),\Sigma(v))\colon v\in U\}
\]
is open in $M\times M$ (where $\pi_{TM}\colon TM\to M$ is the bundle
projection) and the map
\[
\theta:=(\pi_{TM},\Sigma)\colon U \to U'
\]
is a $C^\infty$-diffeomorphism. If
\begin{equation}\label{bettersigma}
T_{0_p}(\Sigma|_{T_pM})=\id_{T_pM}\;\,
\mbox{for all $p\in M$,}
\end{equation}
we say that the local addition $\Sigma$ is \emph{normalized}.
\end{defn}
Until Lemma~\ref{la:cano:mfdmap}, we fix the following setting, which allows a canonical
manifold structure on $C^\ell(K,M)$ to be constructed.
\begin{numba}\label{thesetA}
We consider a compact smooth manifold $K$ (possibly with rough boundary),
a smooth manifold $M$ which admits a local addition $\Sigma \colon TM\supseteq U \rightarrow M$,
and $\ell \in \N_0 \cup \{\infty\}$.
\end{numba}
\begin{numba}[Manifold structure on $C^\ell (K,M)$]\label{numba:mfdstruct}
For $f\in C^\ell(K,M)$, the locally convex space of $C^\ell$-sections
of the pullback-vector bundle $f^*(TM)$ is isomorphic to
\[
\Gamma_f:=\{\tau\in C^\ell(K,TM)\colon \pi_{TM}\circ \tau=f\},
\]
as explained in~\ref{bundleconv}.
Then
\[
O_f:=\Gamma_f\cap C^\ell(K,U)
\]
is an open subset of~$\Gamma_f$,
\[
O_f':=\{g\in C^\ell(K,M)\colon (f,g)(K)\subseteq U'\}
\]
is an open subset of $C^\ell(K,M)$ and the map
\begin{equation}\label{thephif}
\phi_f\colon O_f\to O'_f,\quad\tau\mto \Sigma\circ \tau
\end{equation}
is a homeomorphism with inverse $g\mto \theta^{-1}\circ (f,g)$.
By the preceding, if also $h\in C^\ell(K,M)$, then $\psi:=\phi_h^{-1}\circ \phi_f$
has an open domain $D\subseteq \Gamma_f$ and is a smooth map $D\to\Gamma_h$
by Lemma~\ref{expsect}, as $\psi^\wedge\colon D\times K\to TM$,
\[
(\tau,x)\mto (\phi_h^{-1}\circ \phi_f)(\tau)(x)=\theta^{-1}(h(x),\Sigma(\tau(x)))
=\theta^{-1}(h(x),\Sigma(\ve(\tau,x)))
\]
is a $C^{\infty,\ell}$-map (exploiting that the evaluation map
$\ve\colon \Gamma_f\times K\to TM$
is $C^{\infty,\ell}$, by Lemma~\ref{evalsect}).
Hence $C^\ell(K,M)$ has a smooth manifold structure for which each of the maps
$\phi_f^{-1}$ is a local chart.
\end{numba}
We now prove that the manifold structure on $C^\ell (K,M)$ is canonical.
Together with Lemma~\ref{base-cano}\,(b), this implies
that the smooth manifold structure on $C^\ell(K,M)$ constructed in \ref{numba:mfdstruct}
is independent of the choice of local addition.
\begin{la}\label{la:cano:mfdmap}
The manifold structure on $C^\ell (K,M)$ constructed in \ref{numba:mfdstruct} is canonical.
\end{la}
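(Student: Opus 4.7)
The plan is to verify the two parts of Definition \ref{defcanon}: first, that the topology induced by the manifold structure of \ref{numba:mfdstruct} coincides with the compact-open $C^\ell$-topology on $C^\ell(K,M)$; and second, that the exponential-type characterisation of $C^k$-maps into $C^\ell(K,M)$ holds. Statement~(i) is immediate from the construction, since the charts $\phi_f$ are prescribed as homeomorphisms $O_f\to O_f'$ with $O_f'$ an open subset of $C^\ell(K,M)$ equipped with the compact-open $C^\ell$-topology; the sets $\phi_f^{-1}(W)$, as $W$ ranges over open subsets of $O_f$, therefore generate precisely this topology. Hence only (ii) requires work. The key technical input will be the Exponential Law for sections, Lemma~\ref{expsect}, which applies since $K$ is compact and hence locally compact.

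For (ii), I would fix $x_0\in N$ and set $g_0:=f(x_0)$, and then localize via the chart $\phi_{g_0}^{-1}$. Since $\theta(0_{g_0(y)})=(g_0(y),g_0(y))\in U'$ for every $y\in K$ and $K$ is compact, the set
\[
V:=\{x\in N\colon (g_0(y),f(x)(y))\in U'\text{ for all }y\in K\}
\]
is an open neighborhood of $x_0$ in~$N$ under either hypothesis (continuity of $f$ with respect to the compact-open $C^\ell$-topology, or continuity of $f^\wedge$, both of which follow from the assumption that $f$ is $C^k$ or that $f^\wedge$ is $C^{k,\ell}$, respectively). On $V$, define
\[
\sigma\colon V\to \Gamma_{g_0},\quad \sigma(x):=\theta^{-1}\circ (g_0,f(x))=\phi_{g_0}^{-1}(f(x)),
\]
so that $f|_V=\phi_{g_0}\circ \sigma$. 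The whole argument is then the transfer between $\sigma$ and its adjoint $\sigma^\wedge\colon V\times K\to TM$ via Lemma~\ref{expsect}.

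Assume first that $f$ is $C^k$. Because the chart map $\phi_{g_0}^{-1}$ is smooth, $\sigma$ is $C^k$ on~$V$, so by Lemma~\ref{expsect} the adjoint $\sigma^\wedge$ is $C^{k,\ell}$. But $f^\wedge(x,y)=\Sigma(\sigma(x)(y))=\Sigma\circ \sigma^\wedge(x,y)$, and $\Sigma$ is smooth, so $f^\wedge|_{V\times K}$ is $C^{k,\ell}$; covering~$N$ by such neighborhoods and invoking the local nature of $C^{k,\ell}$ shows that $f^\wedge$ is $C^{k,\ell}$ on all of $N\times K$. Conversely, assume $f^\wedge$ is $C^{k,\ell}$. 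Then
\[
\sigma^\wedge(x,y)=\theta^{-1}(g_0(y),f^\wedge(x,y))
\]
is $C^{k,\ell}$, being the composition of the smooth $\theta^{-1}$ with $(g_0\circ\pi_K,\,f^\wedge)$ where $\pi_K\colon V\times K\to K$ is the projection. Lemma~\ref{expsect} then yields that $\sigma\colon V\to\Gamma_{g_0}$ is $C^k$, and hence $f|_V=\phi_{g_0}\circ\sigma$ is $C^k$ as a map into $C^\ell(K,M)$; letting $x_0$ vary gives that $f$ is $C^k$ on all of $N$. The only subtle point, and the one to be careful about, is verifying openness of~$V$ and that the smoothness property transports correctly through the identification $\Gamma_{C^\ell}(f^*TM)\cong \Gamma_f$ used to set up the charts; both are handled by the discussion in \ref{bundleconv} together with the compactness of~$K$.
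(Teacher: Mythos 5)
Your proof is correct and takes essentially the same route as the paper's: both localize around $f(x_0)$ via the chart $\phi_{g_0}$, transfer between $C^k$ and $C^{k,\ell}$ using the Exponential Law for sections (Lemma~\ref{expsect}), and exploit smoothness of $\Sigma$ and $\theta^{-1}$ (the paper packages the forward direction as the statement that the evaluation map is $C^{\infty,\ell}$, but the underlying chart computation is the one you give).
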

\begin{proof}
We first show that the evaluation map $\ev\colon C^\ell(K,M)\times K\to M$
is $C^{\infty,\ell}$. 
It suffices to show that $\ev(\phi_f(\tau),x)$ is $C^{\infty,\ell}$
in $(\tau,x)\in O_f\times K$
for all $f\in C^\ell(K,M)$. This follows from
\[
\ev(\phi_f(\tau),x)=\Sigma(\tau(x))=\Sigma(\ve(\tau,x)),
\]
where $\ve\colon\Gamma_f\times K\to TM$, $(\tau,x)\mto\tau(x)$
is $C^{\infty,\ell}$ by Lemma~\ref{evalsect}.
Now let $k\in\N_0\cup\{\infty\}$ and $h\colon N\to C^\ell(K,M)$ be a map, where
$N$ is a $C^k$-manifold modelled on locally convex spaces
(possibly with rough boundary). If $h$ is~$C^k$, then $h^\wedge=\ev\circ (h\times\id_K)$
is~$C^{k,\ell}$.
If, conversely, $h^\wedge$ is $C^{k,\ell}$,
then $h$ is continuous as a map to $C(K,M)$ with the compact-open topology
(see \cite[Proposition~A.5.17]{GaN})
and $h(x)=h^\wedge(x,\cdot)\in C^\ell(K,M)$ for each $x\in N$.
Given $x\in N$, let $f:=h(x)$. Then $\psi_f\colon C(K,M)\to C(K,M)\times C(K,M)\sim C(K,M\times M)$,
$g\mto (f,g)$ is a continuous map.
Since $\psi_f(g)$ is $C^\ell$ if and only if~$g$ is $C^\ell$, we see that
\begin{eqnarray*}
W &:=& h^{-1}(O_f')=h^{-1}(\psi_f^{-1}(C^\ell(K,U')))=(\psi_f\circ h)^{-1}(C^\ell(K,U'))\\
&=&(\psi_f\circ h)^{-1}(C(K,U'))
\end{eqnarray*}
is an open $x$-neighborhood in~$N$. As the map $(\phi_f^{-1}\circ h|_W)^\wedge\colon
W\times K\to TM$,
\[
(y,z)\mto ((\phi_f)^{-1}\circ h|_W)^\wedge(y,z)=(\theta^{-1}\circ (f,h(y)))(z)=\theta^{-1}(f(z),h^\wedge(y,z))
\]
is $C^{k,\ell}$ by \cite[Lemma 3.18]{AaS},
the map $\phi_f^{-1}\circ h|_W\colon W\to\Gamma_f$ (and hence also $h|_W$)
is~$C^k$, by Lemma~\ref{expsect}.
\end{proof}
\subsection*{The tangent bundle of {\boldmath$C^\ell(K,M)$}}
We now identify the tangent bundle of a manifold of mappings,
as well as the tangent maps of superposition operators between
such manifolds. We start with a description of the main steps and ideas;
three more technical proofs
(of Lemma~\ref{addTM}, Lemma~\ref{cannormalize},
and Theorem~\ref{thetabu}) are relegated to the following subsection.
An observation is crucial:
\begin{la}[cf.\ {\cite[Lemma 7.5]{SaW} or \cite[10.11]{Mic}}]\label{addTM}
If a smooth manifold $M$ admits a
local addition, then also its tangent manifold $TM$
admits a local addition.
\end{la}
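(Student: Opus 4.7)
The plan is to build a local addition on $TM$ by composing the tangent map $T\Sigma$ with the canonical involution $\kappa$ of the double tangent bundle $T(TM)$. First, I would recall that for any smooth manifold modelled on locally convex spaces, $T(TM)$ carries a canonical smooth involution $\kappa \colon T(TM) \to T(TM)$ interchanging the two natural projections to $TM$, i.e.\ $\pi_{T(TM)} \circ \kappa = T\pi_{TM}$ and $T\pi_{TM} \circ \kappa = \pi_{T(TM)}$. In any chart induced by $\varphi \colon M \supseteq W \to E$, one has $T(TW) \cong W \times E \times E \times E$ and $\kappa$ is the flip $(x,v,y,w) \mapsto (x,y,v,w)$, well defined globally by the symmetry of mixed second derivatives.

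Next, I would set
\[
\widetilde U := \kappa^{-1}(TU) = \{\xi \in T(TM) \colon T\pi_{TM}(\xi) \in U\},
\]
which is open in $T(TM)$; and since $T\pi_{TM}(0_v) = 0_{\pi_{TM}(v)} \in U$ for every $v \in TM$, the set $\widetilde U$ contains the zero section of $TM$ inside $T(TM)$. I would then define the candidate local addition
\[
\Sigma_T := T\Sigma \circ \kappa|_{\widetilde U} \colon \widetilde U \to TM.
\]
To check $\Sigma_T(0_v) = v$, I would work in a chart: writing $\Sigma$ locally as $(x,v) \mapsto \sigma(x,v)$ with $\sigma(x,0) = x$, one gets $\kappa(0_v) = (x,0,v,0)$ for $v \in T_xM$, so $T\Sigma(x,0,v,0) = (\sigma(x,0), \partial_x\sigma(x,0) \cdot v) = (x,v)$, since $\sigma(\cdot,0) = \id$ forces $\partial_x\sigma(x,0) = \id$.

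The decisive step is to verify that $(\pi_{T(TM)},\Sigma_T) \colon \widetilde U \to TM \times TM$ is a diffeomorphism onto an open neighborhood of the diagonal. The observation that makes this painless is the identity
\[
(\pi_{T(TM)},\Sigma_T) = (T\pi_{TM}, T\Sigma) \circ \kappa = T(\pi_{TM},\Sigma) \circ \kappa,
\]
which follows from $\pi_{T(TM)} = T\pi_{TM} \circ \kappa$ together with the canonical identification $T(M \times M) \cong TM \times TM$. Since $(\pi_{TM},\Sigma) \colon U \to U'$ is a diffeomorphism onto an open subset $U' \subseteq M \times M$ containing the diagonal, its tangent map is a diffeomorphism $TU \to TU'$, and $TU' \subseteq TM\times TM$ is open and contains the diagonal of $TM \times TM$ (being $T$ of an open set containing the diagonal of $M\times M$). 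Composing with the diffeomorphism $\kappa|_{\widetilde U} \colon \widetilde U \to TU$ yields the desired global diffeomorphism $\widetilde U \to TU'$, and one verifies that the image of the zero section under $(\pi_{T(TM)},\Sigma_T)$ is exactly the diagonal $\{(v,v) : v \in TM\}$.

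The main obstacle is purely notational: keeping track of the two different vector-bundle structures on $T(TM)$ and the canonical identification $T(M \times M) \cong TM \times TM$ under which $T(\pi_{TM},\Sigma)$ becomes $(T\pi_{TM},T\Sigma)$. Once $\kappa$ and these identifications are in place, the construction reduces entirely to the functoriality of $T$ applied to the given diffeomorphism $(\pi_{TM},\Sigma)$, with no further analytic work required.
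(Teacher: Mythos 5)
Your proposal is correct and follows essentially the same route as the paper: both define the local addition on $TM$ as $T\Sigma\circ\kappa$ on the open set $\kappa(TU)=\kappa^{-1}(TU)$, use the identity $\pi_{T^2M}=T\pi_{TM}\circ\kappa$ to recognise $(\pi_{T^2M},T\Sigma\circ\kappa)$ as $T(\pi_{TM},\Sigma)\circ\kappa$, and verify $\Sigma_{TM}(0_v)=v$ by a chart computation differentiating $\Sigma(0_p)=p$ along the base. The only cosmetic difference is that you state the diffeomorphism property directly via functoriality of $T$ applied to $\theta=(\pi_{TM},\Sigma)$, which the paper does as well, just with more explicit chart bookkeeping.
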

Since $M$ admits a local addition in the setting of~\ref{thesetA},
we deduce from Lemmas~\ref{addTM} and \ref{la:cano:mfdmap}
that also $C^\ell(K,TM)$
admits a canonical smooth manifold structure. By
Proposition~\ref{fstar-gen},
the map
\[
C^\ell(K,\pi_{TM})\colon C^\ell(K,TM)\to C^\ell(K,M),\quad\tau\mto \pi_{TM}\circ \tau
\]
is smooth. For each $f\in C^\ell(K,M)$, we have
$C^\ell(K,\pi_{TM})^{-1}(\{f\})=\Gamma_f$,
which we endow with a vector space structure as in~\ref{bundleconv}.
Given $x\in K$, let $\ve_x\colon C^\ell(K,M)\to M$, $f\mto f(x)$
be the point evaluation at~$x$.
Then the tangent bundle
of $C^\ell(K,M)$ can be described as follows:
\begin{thm}\label{thetabu}
In the situation of \ref{thesetA},
\[
C^\ell(K,\pi_{TM})\colon C^\ell(K,TM)\to C^\ell(K,M)
\]
is a smooth vector bundle with fiber $\Gamma_f$ over $f\in C^\ell(K,M)$.
For each $v\in T(C^\ell(K,M))$, we have $\Phi(v):=(T\ve_x(v))_{x\in K}\in C^\ell(K,TM)$
and the map
\[
\Phi\colon TC^\ell(K,M)\to C^\ell(K,TM),\quad v\mto \Phi(v)
\]
is an isomorphism of smooth vector bundles $($over the identity$)$.
\end{thm}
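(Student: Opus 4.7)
The plan is as follows. First, by Lemma \ref{addTM} the tangent manifold $TM$ admits a local addition, so Lemma \ref{la:cano:mfdmap} endows $C^\ell(K,TM)$ with a canonical smooth manifold structure, and Proposition \ref{fstar-gen} makes $C^\ell(K,\pi_{TM})$ smooth. Its set-theoretic fiber over $f\in C^\ell(K,M)$ is $\Gamma_f$, with the locally convex topology and pointwise vector space structure from~\ref{bundleconv}. My goal is to exhibit $\Phi$ as a fiberwise-linear diffeomorphism $TC^\ell(K,M)\to C^\ell(K,TM)$ covering $\id_{C^\ell(K,M)}$; the vector bundle structure of $TC^\ell(K,M)$ then transports through $\Phi$, giving both assertions at once.

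To check $\Phi$ is well-defined and smooth, I would use that $\ve\colon C^\ell(K,M)\times K\to M$ is $C^{\infty,\ell}$ by Lemma \ref{base-cano}(a). The partial tangent in the first factor,
$$\Phi^\wedge\colon TC^\ell(K,M)\times K\to TM,\quad (v,x)\mto T\ve_x(v),$$
is then $C^{\infty,\ell}$ by the standard partial-tangent calculus for $C^{\infty,\ell}$-maps (no regularity is lost in the first variable since $\infty-1=\infty$). This shows simultaneously that $\Phi(v)\in C^\ell(K,TM)$ with $\pi_{TM}\circ\Phi(v)=\pi_{TC^\ell(K,M)}(v)$, and that $\Phi$ is smooth by the canonical property of $C^\ell(K,TM)$. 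Linearity on each fiber $T_fC^\ell(K,M)\to\Gamma_f$ is immediate from linearity of every $T\ve_x$ combined with the pointwise vector space structure on $\Gamma_f$.

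The bijectivity of $\Phi$ and smoothness of its inverse should come from a chart computation. In the local parametrization $\phi_f\colon O_f\to O_f'\subseteq C^\ell(K,M)$ from~\ref{numba:mfdstruct}, $\ve_x\circ\phi_f$ sends $\sigma\mapsto\Sigma(\sigma(x))$, and differentiation along the fiber $T_{f(x)}M$ delivers the explicit formula
$$\Phi\bigl(T_\sigma\phi_f(\tau)\bigr)(x)\;=\;T_{\sigma(x)}\bigl(\Sigma|_{T_{f(x)}M}\bigr)(\tau(x))\qquad(\sigma\in O_f,\;\tau\in\Gamma_f).$$
Since $\Sigma|_{T_{f(x)}M}$ is a local diffeomorphism, every $L_{\sigma(x)}\coloneq T_{\sigma(x)}(\Sigma|_{T_{f(x)}M})\colon T_{f(x)}M\to T_{\Sigma(\sigma(x))}M$ is a continuous linear isomorphism; invoking Lemma \ref{cannormalize} to replace $\Sigma$ by a normalized local addition makes $L_{0_{f(x)}}=\id$, which removes any extraneous identification on the zero section and exhibits $\Phi$ as a bijection $TC^\ell(K,M)|_{O_f'}\to C^\ell(K,\pi_{TM})^{-1}(O_f')$.

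Finally, the local inverse of $\Phi$ on $C^\ell(K,\pi_{TM})^{-1}(O_f')$ is $\tilde\tau\mapsto T_\sigma\phi_f\bigl(x\mapsto L_{\sigma(x)}^{-1}(\tilde\tau(x))\bigr)$ with $\sigma\coloneq\phi_f^{-1}(\pi_{TM}\circ\tilde\tau)$; its smoothness follows from smoothness of operator inversion applied to the fiberwise derivatives of $\Sigma$, together with the canonical-property argument already used for $\Phi$. Consequently $\Phi$ is a fiberwise-linear diffeomorphism over $\id$ which transports the vector bundle structure from $TC^\ell(K,M)$ to $C^\ell(K,\pi_{TM})$ and simultaneously certifies itself as an isomorphism of vector bundles. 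I expect the principal technical obstacle to lie in this chart computation: one must keep track of the ambient tangent $T(TM)$ versus the fiber-tangent $T_{\sigma(x)}(T_{f(x)}M)\cong T_{f(x)}M$, and verify that the pointwise inverse $L_{\sigma(x)}^{-1}$ depends smoothly on $\sigma$ in the appropriate $C^{\infty,\ell}$-sense; this is where compactness of $K$ and the normalization lemma pay off.
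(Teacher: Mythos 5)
Your overall architecture is the same as the paper's: identify the fibre of $C^\ell(K,\pi_{TM})$ over $f$ with $\Gamma_f$, use a normalized local addition (Lemma~\ref{cannormalize}) so that $\Phi\circ T\phi_f(0,\cdot)=\id_{\Gamma_f}$, and then read off $\Phi$ in the charts $T\phi_f$. Your treatment of the forward direction (smoothness of $\Phi$ via the exponential law applied to the partial tangent of the evaluation map) is a reasonable variant. The gap is in the inverse direction, and it sits exactly where you yourself locate the ``principal technical obstacle''. First, the one concrete tool you name --- ``smoothness of operator inversion applied to the fiberwise derivatives of $\Sigma$'' --- is not available here: the setting of \ref{thesetA} imposes no Banach condition on~$M$, and for a general locally convex model space $E$ the group $\GL(E)$ is neither open in $\cL(E)_b$ nor is inversion continuous on it. Second, and more fundamentally, your formula $\Phi(T_\sigma\phi_f(\tau))(x)=L_{\sigma(x)}(\tau(x))$ with $L_{\sigma(x)}=T_{\sigma(x)}(\Sigma|_{T_{f(x)}M})$ involves a family of linear maps between \emph{varying} fibres; to conclude that $x\mto L_{\sigma(x)}^{-1}(\tilde\tau(x))$ is again a $C^\ell$-section (which is needed already for surjectivity of $\Phi$ onto $\Gamma_{\phi_f(\sigma)}$, not only for smoothness of $\Phi^{-1}$) and that it depends smoothly on $(\sigma,\tilde\tau)$, one must realize this family as a restriction of a single smooth map on an iterated tangent bundle. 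That realization is precisely the content of the machinery the paper builds for this purpose: the canonical flip $\kappa$, the induced local addition $\Sigma_{TM}=T\Sigma\circ\kappa$ of Lemma~\ref{addTM}, and the bundle isomorphism $\Theta\colon TM\oplus TM\to\pi_{T^2M}^{-1}(0_M)$ of Lemma~\ref{infra}, which together yield the chart identity $\Phi\circ T\phi_f=\phi_{0\circ f}\circ\Psi_f$ and make both directions manifestly smooth at once. Your proposal gestures at this bookkeeping ($T(TM)$ versus $T_{\sigma(x)}(T_{f(x)}M)$) but does not supply it.

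The gap is fixable with tools already in the paper. In place of operator inversion, express the fibrewise inverse through the inverse of the diffeomorphism $\theta=(\pi_{TM},\Sigma)$: since $(\Sigma|_{T_pM\cap U})^{-1}(q)=\theta^{-1}(p,q)$, one has $L_{\sigma(x)}^{-1}=d_2(\theta^{-1})(f(x),\Sigma(\sigma(x)),\cdot)$ up to the fibre-tangent identification, and this is jointly smooth in all arguments; this is exactly the device of Lemma~\ref{localdiffinv}, used in the proof of Lemma~\ref{cannormalize}. Packaging the resulting family as a map into $T^2M$ via the flip then reduces your chart computation to the paper's identity $\Phi\circ T\phi_f=\phi_{0\circ f}\circ\Psi_f$, after which both bijectivity and smoothness of $\Phi^{\pm 1}$ follow. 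As written, however, the proposal does not close this step.
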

If we wish to emphasize the dependence on $M$, we write $\Phi_M$
instead of~$\Phi$.
\begin{rem}\label{alsothis}
(a) Assume that the local addition $\Sigma\colon U\to M$ is normalized
in the sense of (\ref{bettersigma}). 
Then the proof of Theorem~\ref{thetabu}
will show that
\[
\Phi\circ T\phi_f(0,\cdot)\colon\Gamma_f\to C^\ell(K,TM)
\]
is the inclusion map $\tau\mto \tau$,
for each $f\in C^\ell(K,M)$ (where $\phi_f$ is as in (\ref{thephif})).\smallskip

(b) Compare \cite[Theorem 10.13]{Mic} for a special case of Theorem~\ref{thetabu}
for finite-dimensional~$M$ and $\ell=\infty$
(and \cite[Theorem 7.9]{SaW} for additional explanations concerning Michor's discussion).
\end{rem}
By the preceding, it is useful to work with normalized
local additions. This is no further restriction:
\begin{la}\label{cannormalize}
If a smooth manifold $M$ admits a local addition,
then~$M$ also admits a normalized local addition.
\end{la}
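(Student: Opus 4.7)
Given a local addition $\Sigma\colon U\to M$, the plan is to twist $\Sigma$ by a smooth bundle automorphism of $TM$ that is fiberwise the inverse of the vertical derivative of $\Sigma$ at the zero section. To make this precise, for each $p\in M$ set
\[
A_p\;\defi\;T_{0_p}(\Sigma|_{T_pM\cap U})\colon T_pM\to T_pM.
\]
First I would check that $A_p$ is a continuous linear isomorphism: since $\theta=(\pi_{TM},\Sigma)\colon U\to U'$ is a $C^\infty$-diffeomorphism, its tangent map $T_{0_p}\theta\colon T_{0_p}(TM)\to T_{(p,p)}(M\times M)$ is a toplinear isomorphism; identifying $T_{0_p}(TM)\cong T_pM\oplus T_pM$ via the zero section and the fiber, and using $\Sigma(0_q)=q$ for all $q\in M$, one computes $T_{0_p}\theta(X,V)=(X,X+A_pV)$, which is an isomorphism if and only if $A_p$ is one.

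Next, I would assemble the $A_p$ into a smooth fiberwise-linear bundle endomorphism $A\colon TM\to TM$ over $\id_M$. In a chart $\psi\colon V\to V'\subseteq E$ trivializing $TM|_V\cong V'\times E$, the local addition becomes a smooth map $\sigma\colon V'\times E\supseteq W\to V'$ with $\sigma(x,0)=x$, and one reads off $A_x(v)=d^{(0,1)}\sigma(x,0;v)$, which is smooth in $(x,v)$ by smoothness of~$\sigma$. One then constructs the inverse bundle map $B\colon TM\to TM$, $v\mto A_{\pi_{TM}(v)}^{-1}(v)$. In the same chart, $B$ is given by $(x,v)\mto A_x^{-1}(v)$; the smoothness of this map is exactly the content of the smooth-parameter inversion result invoked earlier in the paper in the proof of Lemma~\ref{triv-case} (\cite[Lemma~2.3]{DiK}; cf.\ \cite[Exercise~1.3.10]{GaN}, \cite[Theorem~5.3.1]{Ham}), applied to $f(x,v)=A_x(v)$. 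Since $B$ is fiberwise a linear isomorphism with inverse $v\mto A_{\pi_{TM}(v)}(v)$ (also smooth by the same argument, or directly), $B$ is a $C^\infty$-diffeomorphism of $TM$ with $B(0_p)=0_p$.

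Finally, I would set $\wt U\defi B^{-1}(U)$, an open neighborhood of the zero section in $TM$, and define
\[
\wt\Sigma\defi \Sigma\circ B|_{\wt U}\colon \wt U\to M.
\]
Then $\wt\Sigma(0_p)=\Sigma(0_p)=p$, and $(\pi_{TM},\wt\Sigma)=(\pi_{TM},\Sigma)\circ B|_{\wt U}$ is a composition of $C^\infty$-diffeomorphisms onto $U'$, so $\wt\Sigma$ is a local addition. The normalization is immediate: for $v\in T_pM$,
\[
T_{0_p}(\wt\Sigma|_{T_pM})(v)=T_{0_p}(\Sigma|_{T_pM})\bigl(T_{0_p}(B|_{T_pM})(v)\bigr)=A_p(A_p^{-1}v)=v.
\]

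\textbf{Main obstacle.} The only nontrivial step is verifying that $B$ is smooth, i.e.\ smoothness of parameter-dependent inversion $(x,v)\mto A_x^{-1}(v)$. This is handled by the inversion lemma cited in the paper (and is unproblematic when the relevant modelling spaces are Banach; in the general locally convex setting one invokes the same references used in Lemma~\ref{triv-case}). Everything else is formal: linearity of $B$ on fibers makes $\wt\Sigma$ compose cleanly with $(\pi_{TM},\Sigma)$, and the normalization falls out of the chain rule.
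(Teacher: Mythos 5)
Your overall strategy coincides with the paper's: set $A_p\defi T_{0_p}(\Sigma|_{T_pM})$, check $A_p\in\GL(T_pM)$ using the diffeomorphism $\theta=(\pi_{TM},\Sigma)$, assemble the fiberwise inverses into a bundle map $B\colon TM\to TM$, $v\mto A_{\pi_{TM}(v)}^{-1}(v)$, and replace $\Sigma$ by $\Sigma\circ B$. The formal verifications (that $\Sigma\circ B|_{B^{-1}(U)}$ is a local addition and that it is normalized) are correct as written. The gap sits exactly where you flag the ``main obstacle'', and your proposed fix does not close it: the inversion results you cite (\cite[Lemma~2.3]{DiK}, \cite[Theorem~5.3.1]{Ham}, \cite[Exercise~1.3.10]{GaN}) all take as a \emph{hypothesis} that the parametrized inverse $(x,v)\mto A_x^{-1}(v)$ is already jointly continuous, and only then upgrade continuity to $C^\ell$. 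In Lemma~\ref{triv-case} that continuity is supplied by the openness of the set of invertible operators in $\cL(E,Z)_b$ and the continuity of operator inversion there --- facts special to Banach targets. Lemma~\ref{cannormalize} is stated for arbitrary smooth manifolds modelled on locally convex spaces, where $\GL(E)$ need not be open and inversion need not be continuous, so neither the continuity nor the smoothness of $B$ is established by your argument outside the Banach case.

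The paper closes precisely this gap with Lemma~\ref{localdiffinv}: writing $\theta$ in a chart as $f(x,y)=(x,g(x,y))$, a $C^\infty$-diffeomorphism onto an open set, and letting $h$ denote the second component of $f^{-1}$, one differentiates the identity $h(x,g(x,y))=y$ to obtain $d_2h(x,g(x,0),\cdot)\circ\beta_x=\id$ with $\beta_x=d_2g(x,0,\cdot)$; hence $\beta_x^{-1}(z)=d_2h(x,g(x,0),z)$ is an \emph{explicit} smooth formula for the inverse family, obtained from the global smoothness of $\theta^{-1}$ rather than from any operator-inversion argument. If you insert this observation (or restrict the statement to Banach manifolds), your proof is complete and is essentially the paper's.
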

Theorem~\ref{thetabu} allows us to identify the tangent maps
of superposition operators.
\begin{cor}\label{formtang}
Let $K$ be a compact smooth manifold $($possibly with rough boundary$)$
and $g\colon M\to N$ be a $C^{\ell+1}$-map between smooth manifolds~$M$
and $N$ admitting local additions.
Then the tangent map of the $C^1$-map
\[
C^\ell(K,g)\colon C^\ell(K,M)\to C^\ell(K,N),\quad f\mto g\circ f
\]
is given by $T(C^\ell(K,g))=\Phi_N^{-1}\circ C^\ell(K,Tg)\circ \Phi_M$.
For each $f\in C^\ell(K,M)$, we have $\Phi_M(T_f(C^\ell(K,M)))=\Gamma_f(TM)$,
$\Phi_N(T_{g\circ f}(C^\ell(K,N)))=\Gamma_{g\circ f}(TN)$
and $C^\ell(K,Tg)$ restricts to the map
\begin{equation}\label{nearly}
\Gamma_f(TM)\to\Gamma_{g\circ f}(TN),\quad \tau\mto Tg\circ \tau
\end{equation}
which is continuous linear and corresponds to $T_f(C^\ell(K,g))$.
\end{cor}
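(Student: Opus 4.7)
The strategy is to exploit the explicit description of $\Phi_M$ in Theorem~\ref{thetabu} via point evaluations $\ve_x\colon C^\ell(K,M)\to M$, $f\mto f(x)$, together with the naturality of these evaluations with respect to $g$. First I would check that all objects in the statement make sense: since $g$ is $C^{\ell+1}$, Corollary~\ref{la-reu} with $k=1$ gives that $C^\ell(K,g)$ is a $C^1$-map (so its tangent map is defined), while Lemma~\ref{addTM} ensures that $TM$ and $TN$ admit local additions, so that $C^\ell(K,TM)$ and $C^\ell(K,TN)$ carry canonical smooth manifold structures, and the vector bundle isomorphisms $\Phi_M,\Phi_N$ of Theorem~\ref{thetabu} are available. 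Moreover, $C^\ell(K,Tg)$ is continuous (e.g.\ by Corollary~\ref{la-reu} applied to the $C^\ell$-map $Tg$).

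The key identity is the pointwise naturality
\[
\ve_x^N\circ C^\ell(K,g)\;=\;g\circ\ve_x^M\qquad\text{for every }x\in K,
\]
since both maps send $f\mto g(f(x))$. Point evaluations are smooth because they factor through the $C^{\infty,\ell}$-evaluation $\ve$ of Lemma~\ref{base-cano}\,(a), so the chain rule yields $T\ve_x^N\circ T(C^\ell(K,g))=Tg\circ T\ve_x^M$. Feeding this into the characterisation $\Phi_\bullet(w)=(T\ve_x^\bullet(w))_{x\in K}$ from Theorem~\ref{thetabu}, for any $v\in T_fC^\ell(K,M)$ and any $x\in K$ I obtain
\[
\Phi_N(T(C^\ell(K,g))v)(x)=Tg(T\ve_x^M(v))=Tg(\Phi_M(v)(x))=C^\ell(K,Tg)(\Phi_M(v))(x).
\]
This reads $\Phi_N\circ T(C^\ell(K,g))=C^\ell(K,Tg)\circ\Phi_M$, which after pre-composing with $\Phi_N^{-1}$ yields the asserted formula.

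For the fibrewise identifications, $\Phi_M$ is a vector bundle isomorphism over $\id_{C^\ell(K,M)}$, so it carries $T_fC^\ell(K,M)$ onto the fibre $\Gamma_f(TM)$ of $C^\ell(K,TM)\to C^\ell(K,M)$, and similarly for $\Phi_N$. The identity $\pi_{TN}\circ Tg=g\circ\pi_{TM}$ then shows $C^\ell(K,Tg)$ sends $\Gamma_f(TM)$ into $\Gamma_{g\circ f}(TN)$ via $\tau\mto Tg\circ\tau$; linearity of this restriction follows because $Tg$ is linear on each fibre $T_{f(x)}M$, and its continuity is inherited from continuity of $C^\ell(K,Tg)$. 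The only mildly delicate point is to make sure the chain rule is applied to genuinely smooth maps in the Bastiani setting, but this is guaranteed by the smoothness of point evaluations recorded in Lemma~\ref{base-cano}\,(a), so no real obstacle is expected.
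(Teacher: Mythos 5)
Your proof is correct, but it follows a genuinely different route from the paper's. The paper first passes to a \emph{normalized} local addition $\Sigma$ on~$M$ (Lemma~\ref{cannormalize}), writes a tangent vector at~$f$ as $T\phi_f(0,\sigma)$ with $\sigma\in\Gamma_f$, and computes $\Phi_N\bigl(T(C^\ell(K,g))T\phi_f(0,\sigma)\bigr)$ explicitly as the family of curve classes $[t\mto g(\Sigma(t\sigma(x)))]$, using the normalization condition~(\ref{bettersigma}) to identify $\frac{d}{dt}\big|_{t=0}\Sigma(t\sigma(x))$ with $\sigma(x)$ and Remark~\ref{alsothis}\,(a) to identify $\Phi_M T\phi_f(0,\sigma)$ with~$\sigma$. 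You instead use only the defining formula $\Phi(v)=(T\ve_x(v))_{x\in K}$ from Theorem~\ref{thetabu}, the naturality $\ve_x^N\circ C^\ell(K,g)=g\circ \ve_x^M$ of point evaluations, and the chain rule (legitimate here, since $C^\ell(K,g)$ is $C^1$ by Corollary~\ref{la-reu} and $\ve_x$ is smooth by Lemma~\ref{base-cano}\,(a)); this bypasses the choice of a normalized local addition and the chart $\phi_f$ entirely, and makes the identity appear as a formal consequence of how $\Phi$ is defined. Your handling of the fibrewise statements — $\Phi_M$ being a bundle isomorphism over the identity, the inclusion $C^\ell(K,Tg)(\Gamma_f(TM))\subseteq\Gamma_{g\circ f}(TN)$ via $\pi_{TN}\circ Tg=g\circ\pi_{TM}$, and continuity by restriction — is also sound. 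What the paper's in-chart computation buys is an explicit verification tied to Remark~\ref{alsothis}\,(a); what yours buys is a shorter, coordinate-free derivation independent of the choice of local addition.
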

\begin{rem}
In the following proof and also in the following subsection,
we find it convenient to consider the tangent space $T_pM$ of a smooth
manifold~$M$ at $p\in M$ as a \emph{geometric}
tangent space.
Thus, the elements of $T_pM$
are geometric tangent vectors, i.e., equivalence classes $[\gamma]$
of $C^1$-curves $\gamma\colon \;]{-\ve},\ve[\,\to M$ with $\gamma(0)=p$
(where two such curves are considered equivalent if their velocity at $t=0$ coincides
in each chart). In the case of an open subset $U$ of a vector space~$E$,
we shall also identify $TU$ with $U\times E$ (as usual).
\end{rem}
\begin{proof}[Proof of Corollary \ref{formtang}]
Let $\Sigma$ be a normalized local addition on~$M$ (cf.\ Lemma~\ref{cannormalize}).
Given $f\in C^\ell(K,M)$, the map $T\phi_f(0,\cdot)\colon \Gamma_f\to T_f(C^\ell(K,M))$
is an isomorphism of vector spaces. The assertions follow from
\begin{align*}
&\Phi_N(T(C^\ell(K,g))T\phi_f(0,\sigma))\\
=& ([t\mto g(\Sigma(t\sigma(x)))])_{x\in K}
= \left(Tg{\textstyle \frac{d}{dt}}\big|_{t=0}\Sigma(t\sigma(x))\right)_{x\in K}\\
=& (Tg(\sigma(x)))_{x\in K}
=Tg\circ \sigma=C^\ell(K,Tg)(\sigma)
= C^\ell(K,Tg)\Phi_MT\phi_f(0,\sigma),
\end{align*}
where we used (\ref{bettersigma})
for the third equality and Remark~\ref{alsothis}\,(a) for the last.
\end{proof}
We mention a further consequence.
\begin{rem}
If $M$ has the local addition $\Sigma\colon U\to M$,
then $C^\ell(K,U)$ is an open subset of $C^\ell(K,TM)$,
which we identify with $T(C^\ell(K,M))$ by means of the map $\Phi$
from Theorem~\ref{thetabu}.
Using this identification, the map $C^\ell(K,\Sigma)\colon C^\ell(K,U)\to C^\ell(K,M)$,
$\gamma\mto \Sigma\circ\gamma$ is easily seen to be a local addition for
$C^\ell(K,M)$.
If $N$ is a compact smooth manifold (possibly with rough boundary) and $k\in \N_0\cup\{\infty\}$,
this enables a canonical smooth manifold structure to be defined on $C^k(N,C^\ell(K,M))$.
By a variant of the construction described in this appendix,
it is possible to endow also $C^{k,\ell}(N\times K,M)$ with a suitable canonical
smooth manifold structure, and to obtain an exponential law of the form
\[
C^{k,\ell}(N\times K,M)\cong C^k(N,C^\ell(K,M));
\]
notably, $C^k(N,C^\ell(K,M))\cong C^\ell(K,C^k(N,M))$ (joint work in progress
by the second and third author).
\end{rem}
\subsection*{Proof of Lemma \ref{cannormalize}, Lemma~\ref{addTM}, and Theorem~\ref{thetabu}}
We now fill in the three proofs which have been postponed
in the preceding subsection.

The following notation is useful: If $U_j$ is an open subset of
a locally convex space $E_j$ for $j\in\{1,2\}$
and $f\colon U_1\times U_2\to F$
is a $C^1$-map to a locally convex space~$F$, we abbreviate
\[
d_2f(x,y,z):=df((x,y),(0,z))
\]
for $(x,y)\in U_1\times U_2$ and $z\in E_2$.%\\[2.3mm]
\begin{proof}[Proof of Lemma~\ref{cannormalize}]
Given a local addition
$\Sigma\colon U\to M$, let us use the notation from~\ref{def-loa}.
The $C^\infty$-diffeomorphism
$\theta:=(\pi_{TM},\Sigma)\colon U\to U'\subseteq M\times M$
takes the open set $T_pM\cap U\subseteq T_pM$ to the submanifold
$(\{p\}\times M)\cap U'$ of~$M\times M$ for each $p\in M$
and restricts to a $C^\infty$-diffeomorphism between these sets.
Hence $\Sigma|_{T_pM\cap U}$ is a $C^\infty$-diffeomorphism
onto an open subset of~$M$, whence
$\alpha_p:=T_{0_p}(\Sigma|_{T_pM})\in\GL(T_pM)$ for each $p\in M$. Define
\[
h \colon TM\to TM,\;\, h(v):= \alpha_{\pi_{TM}(v)}^{-1}(v).
\]
We claim that $h$ is smooth. If this is true,
then
$\Sigma\circ h\colon h^{-1}(U)\to M$
is a normalized local addition. To prove the claim,
let us show smoothness of $T\phi\circ h \circ T\phi^{-1}$
for a given chart $\phi\colon U_\phi\to V_\phi\subseteq E_\phi$ of~$M$.
Abbreviate
\[
P:=\theta^{-1}(U_\phi\times U_\phi) \cap TU_\phi
\]
and $Q:=T\phi(P)$. Then $0_p\in P$ for each $p\in U_\phi$
as $\theta(0_p)=(p,p)\in U_\phi\times U_\phi$
and $0_p\in TU_\phi$. As a consequence, $V_\phi\times \{0\}\subseteq Q$.
Then
\[
f:=(\phi\times \phi )\circ \theta|_P \circ T\phi^{-1}|_Q^P\colon Q\to E_\phi\times E_\phi
\]
is a $C^\infty$-diffeomorphism onto the open subset $(\phi\times\phi)(\theta(P))$
of~$E_\phi\times E_\phi$ and $f$ is of the form $f(x,y)=(x,g(x,y))$ with a smooth function
$g\colon Q\to E_\phi$. By the following lemma, $\beta_x:=d_2g(x,0,\cdot)\in \GL(E_\phi)$
for all $x\in V_\phi$ and the map
\[
V_\phi\times E_\phi\to E_\phi,\quad (x,z)\mto \beta_x^{-1}(z)
\]
is smooth. Given $(x,y)\in V_\phi\times E_\phi$, let $w:=T\phi^{-1}(x,y)$,
$p:=\phi^{-1}(x)$, $v:=\alpha_p^{-1}(w)$ and $z:=d\phi(v)$ (whence $v=T\phi^{-1}(x,z)$).
Since
\[
g(x,u)=\phi(\Sigma|_{T_pM}T\phi^{-1}(x,\cdot)|_{E_\phi}^{T_pM}(u))
\]
for $u\in E_\phi$
close to~$0$, we deduce that
\[
\beta_x(z)=d_2g(x,0,z)=T\phi(\alpha_p(T\phi^{-1}(x,z)))=T\phi(\alpha_p(v))=T\phi(w)=(x,y).
\]
Thus,
$d\phi(v)=z=\beta_x^{-1}(x,y)$ and $(T\phi\circ h\circ T\phi^{-1})(x,y)
=T\phi (\alpha_p^{-1}(w))=T\phi(v)=(x,\beta_x^{-1}(x,y))$,
which is a smooth $E_\phi\times E_\phi$-valued function of $(x,y)\in V_\phi\times E_\phi$.
\end{proof}
\begin{la}\label{localdiffinv}
Let $E$ be a locally convex space, $W\subseteq E$ be an open subset
and $Q\subseteq E\times E$ be an open subset such that $W\times\{0\}\subseteq Q$.
Let $g\colon Q\to E$ be a smooth mapping such that
\[
f\colon Q\to E\times E,\quad (x,y)\mto (x,g(x,y))
\]
has open image and is a $C^\infty$-diffeomorphism onto its image.
Then $\beta_x:=d_2g(x,0,\cdot)\in \GL(E)$ for each $x\in W$ and the map
$
W\times E\to E,\quad (x,z)\mto \beta_x^{-1}(z)
$
is smooth.
\end{la}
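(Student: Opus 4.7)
The plan is to exploit the smoothness of $f^{-1}$, which is available by assumption, and then to read off both $\beta_x^{-1}$ and its smooth dependence on $x$ from the chain rule. The whole argument is essentially the inverse function theorem written out by hand; since invertibility of~$f$ is given, no fixed-point or estimate is needed.

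First, since $f\colon Q\to f(Q)$ is a $C^\infty$-diffeomorphism onto an open subset of $E\times E$ and has the form $f(x,y)=(x,g(x,y))$, the inverse $f^{-1}\colon f(Q)\to Q$ preserves the first factor, whence $f^{-1}(x,z)=(x,h(x,z))$ for a smooth map $h$ defined on the open set $f(Q)\subseteq E\times E$. Set $y_0(x):=g(x,0)$ for $x\in W$; then $W\ni x\mapsto y_0(x)\in E$ is smooth, and $h(x,y_0(x))=0$ for all $x\in W$ (applying $f^{-1}$ to $(x,y_0(x))=f(x,0)$).

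Next I would differentiate the two identities
\[
g(x,h(x,z))=z \quad\text{and}\quad h(x,g(x,y))=y
\]
in the second slot, using the chain rule. Evaluating the first at $z=y_0(x)$ gives $d_2g(x,0,\cdot)\circ d_2h(x,y_0(x),\cdot)=\mathrm{id}_E$, i.e.\ $\beta_x\circ d_2h(x,y_0(x),\cdot)=\mathrm{id}_E$. Evaluating the second at $y=0$ gives $d_2h(x,y_0(x),\cdot)\circ d_2g(x,0,\cdot)=\mathrm{id}_E$, i.e.\ $d_2h(x,y_0(x),\cdot)\circ \beta_x=\mathrm{id}_E$. Combining, $\beta_x\in\mathrm{GL}(E)$ with
\[
\beta_x^{-1}(z) \;=\; d_2h(x,y_0(x),z)\qquad\text{for all }x\in W,\ z\in E.
\]

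Finally, smoothness of $(x,z)\mapsto \beta_x^{-1}(z)$ follows directly from this formula: the map $W\times E\to f(Q)\times E$, $(x,z)\mapsto (x,y_0(x),z)$ is smooth (since $y_0$ is), and $(p,q,v)\mapsto d_2h(p,q,v)$ is smooth as a partial derivative of the smooth map~$h$. Composing gives smoothness on all of $W\times E$.

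I do not expect a serious obstacle: the one point that needs care is to obtain genuine two-sided invertibility of $\beta_x$ (as a topological isomorphism of~$E$) rather than just a one-sided inverse, but this is ensured by differentiating both $f\circ f^{-1}=\mathrm{id}$ and $f^{-1}\circ f=\mathrm{id}$ as above.
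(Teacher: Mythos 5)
Your proposal is correct and follows essentially the same route as the paper: both use that $f^{-1}$ has the form $(x,z)\mapsto(x,h(x,z))$ with $h$ smooth, obtain $\beta_x^{-1}(z)=d_2h(x,g(x,0),z)$ from the chain rule, and read off smoothness from this formula. The only cosmetic difference is that the paper gets invertibility of $\beta_x$ by observing that the slice map $g(x,\cdot)$ is itself a $C^\infty$-diffeomorphism onto the corresponding slice of $f(Q)$, whereas you unfold this by differentiating both composite identities — the same argument in slightly more explicit form.
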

\begin{proof}
Given $x\in W$, the hypotheses imply that the map
\[
g_x:=g(x,\cdot)\colon
\{y\in E\colon (x,y)\in Q\}\to E, \;\,
y\mto g(x,y)
\]
is a $C^\infty$-diffeomorphism onto $\{z\in E\colon (x,z)\in f(Q)\}$.
Hence $\beta_x:=d_2g(x,0,\cdot)=d(g_x)(0,\cdot)\in \GL(E)$.
Write $h$ for the second component of $f^{-1}$.
Given $x\in W$, we have
$h(x,g(x,y))=y$ for all $y\in E$ such that $(x,y)\in Q$ (notably for $y=0$),
whence $d_2h(x,g(x,0),d_2g(x,0,v))=v$
for all $v\in E$ and thus
\[
d_2h(x,g(x,0),\cdot)\circ \beta_x=\id_E.
\]
Hence $\beta_x^{-1}(z)=d_2h(x,g(x,0),z)$, which is smooth in $(x,z)\in W\times
E$.
\end{proof}
As a tool for the proofs of Lemma~\ref{addTM} and Theorem~\ref{thetabu},
we recall the definition of the canonical flip on $T^2M:=T(TM)$,
and some of its properties.
\begin{numba}
Consider a smooth manifold $M$
and the bundle projection $\pi_{TM}\colon TM\to M$.
Then $T^2M$ is a smooth vector bundle over~$TM$;
we write $\pi_{T^2M}\colon T^2M\to TM$ for its bundle
projection.
Given a chart $\phi\colon U_\phi\to V_\phi\subseteq E_\phi$ of~$M$
and $(x,y,z,w)\in V_\phi\times E_\phi\times E_\phi\times E_\phi$, we define
\[
\kappa (T^2(\phi^{-1})(x,y,z,w)):=T^2(\phi^{-1})(x,z,y,w).
\]
It is easy to check that a well-defined smooth map
\[
\kappa\colon T^2M\to T^2M
\]
is obtained in this way
(the \emph{canonical flip}), such that $\kappa\circ\kappa=\id_{T^2M}$.\\[2.3mm]
[In fact, if $T^2(\phi^{-1})(x,y,z,w)=T^2(\psi^{-1})(x',y',z',w')$,
then
\begin{eqnarray*}
(x',y',z',w') &=& T^2(f)(x,y,z,w)\\
&=& (f(x),df(x,y),df(x,z),df(x,w)+d^{(2)}f(x,y,z))
\end{eqnarray*}
with $f:=\psi\circ \phi^{-1}$ and thus
\[
T^2(\psi^{-1})(x',z',y',w')
=T^2(\psi^{-1})(f(x),df(x,z),df(x,y),df(x,w)+d^{(2)}f(x,y,z)),
\]
which coincides with
\begin{eqnarray*}
T^2(\phi^{-1})(x,z,y,w)&=&T^2(\psi^{-1})T^2(f)(x,z,y,w)\\
&=& T^2(\psi^{-1})(f(x),df(x,z),df(x,y),df(x,w)+d^{(2)}f(x,z,y)).]
\end{eqnarray*}
Using a local chart, one readily verifies that
\begin{equation}\label{firstkapp}
\pi_{T^2M}=(T\pi_{TM})\circ \kappa.
\end{equation}
[As $(\phi\circ\pi_{TM}\circ T(\phi^{-1}))(x,y)=x$,
we have $T(\phi\circ\pi_{TM}\circ T(\phi^{-1}))(x,y,z,w)=
(x,z)$ and thus
$(T(\phi)\circ T(\pi_{TM})\circ \kappa \circ T^2(\phi^{-1}))(x,y,z,w)
=T(\phi\circ \pi_{TM}\circ T(\phi^{-1}))(x,z,y,w)=(x,y)
=(T\phi \circ \pi_{T^2M}\circ T^2(\phi^{-1}))(x,y,z,w)$.]
\end{numba}
\begin{proof}[Proof of Lemma~\ref{addTM}]
If $\Sigma\colon U\to M$ is a local addition for~$M$
and $\theta=(\pi_{TM},\Sigma)\colon U\to U'\subseteq M\times M$ (as above) the associated
$C^\infty$-diffeomorphism, then $TU$ is open in $T^2M$,
the set $T(U')$ is open in $T(M\times M)$ (which we identify with $TM\times TM$
via $(T\pr_1,T\pr_2)$) and
\[
T\theta\colon TU\to TU'
\]
is a $C^\infty$-diffeomorphism.
Then also
\[
(T\theta)\circ\kappa\colon \kappa(TU)\to TU'\subseteq TM\times TM
\]
is a $C^\infty$-diffeomorphism and
$(T\theta)\circ\kappa=(\pi_{T^2M},\Sigma_{TM})$
with
\[
\Sigma_{TM}:=(T\Sigma)\circ \kappa\colon \kappa(TU)\to TM.
\]
We shall readily check that $0_v\in \kappa(TU)$ for all $v\in TM$
and $\Sigma_{TM}(0_v)=v$,
whence $\Sigma_{TM}$ is a local addition for~$TM$.
[Given $p\in M$, let $\phi\colon U_\phi\to V_\phi\subseteq E_\phi$ be a chart for~$M$
such that $p\in U_\phi$ and $\phi(p)=0$. Set $P:=U\cap TU_\phi$ and $Q:=(T\phi)(P)$.
Since $0_p\in U$, we have
$(0,0)\in Q$,
whence $\{0\}\times\{0\}\times E_\phi\times E_\phi\subseteq TQ=(T^2\phi)(P)$
and thus $T^2\phi^{-1}(\{0\}\times E_\phi\times\{0\}\times E_\phi)
=\kappa(T^2\phi^{-1}(\{0\}\times\{0\}\times E_\phi\times E_\phi))
\subseteq \kappa(TU)$, entailing that $0_v\in \kappa(TU)$ for all $v\in T_pM$.\\[2.3mm]
To see that $\Sigma_{TM}(0_v)=v$ for all $v\in T_pM$,
note that $T^2\phi(\kappa(0_v))=(0,0,y,0)$ for some
$y\in E_\phi$.
Now
\begin{eqnarray*}
v&=&\pi_{T^2M}(0_v)=T(\pi_{TM})(\kappa(0_v))=T(\pi_{TM})T^2\phi^{-1}(0,0,y,0)\\
&=&
T(\pi_{TM}\circ T\phi^{-1})(0,0,y,0)=T\phi^{-1}(0,y)
\end{eqnarray*}
since $\pi_{TM}\circ T\phi^{-1}(z,0)=\phi^{-1}(z)$ for all $z\in V_\phi$
and thus $T(\pi_{TM}\circ T\phi^{-1})(0,0,y,0)$ $=T\phi^{-1}(0,y)$.
As a consequence,
\begin{eqnarray*}
\Sigma_{TM}(0_v)&=& T\Sigma (\kappa(0_v))=T(\Sigma\circ T\phi^{-1})(0,0,y,0)\\
&=& {\textstyle \frac{d}{dt}}\big|_{t=0}\Sigma T\phi^{-1}(ty,0)
={\textstyle \frac{d}{dt}}\big|_{t=0}\Sigma(0_{\phi^{-1}(ty)})\\
&=& {\textstyle \frac{d}{dt}}\big|_{t=0}\phi^{-1}(ty)
=T\phi^{-1}(0,y)=v,
\end{eqnarray*}
which completes the proof.]
\end{proof}
The following considerations prepare the proof of Theorem~\ref{thetabu}.
Let $M$ be a smooth manifold modelled on locally convex spaces.
In the following proofs, given $p\in M$ we write
$\lambda_p\colon T_pM\to TM$, $v\mto v$ for the inclusion map.
We abbreviate $T^2M:=T(TM)$ and let $\kappa\colon T^2M\to T^2M$
be the canonical flip.
The zero-section $0_M:=\{0_p\in T_pM\colon p\in M\}$ is a split submanifold of~$TM$.
As the bundle projection $\pi_{T^2M}\colon T^2(M)\to TM$ is a smooth submersion,
\cite[Theorem C]{SUB} shows that
the preimage
\[
\pi_{T^2M}^{-1}(0_M)
\]
is a split submanifold of $T^2M$. We can also see this by hand: If $\phi\colon U_\phi\to V_\phi\subseteq E_\phi$
is a chart for~$M$, then $T^2\phi\colon T^2U_\phi \to T^2 V_\phi=V_\phi\times E_\phi\times E_\phi\times E_\phi$
is a chart for $T^2M$ and
\begin{equation}\label{submcha}
T^2\phi(T^2(U_\phi)\cap \pi_{T^2M}^{-1}(0_M))=V_\phi\times \{0\}\times E_\phi\times E_\phi
=T^2(V_\phi)\cap (E_\phi\times \{0\}\times E_\phi\times E_\phi),
\end{equation}
where $E_\phi\times \{0\}\times E_\phi\times E_\phi$ is a complemented topological vector subspace
of $E_\phi\times E_\phi\times E_\phi\times E_\phi$.
Define
\[
\pi\colon \pi_{T^2M}^{-1}(0_M)\to M,\quad v\mto\pi_{TM}(\pi_{T^2M}(v)).
\]
For $p\in M$, we give $\pi^{-1}(\{p\})=T_{0_p}(TM)$ the vector space structure
as the tangent space of the smooth manfold~$TM$ at~$0_p$.
Then $d(T\phi)$ restricts to a linear isomorphism $\pi^{-1}(\{p\})=T_{0_p}(TM)\to E_\phi\times E_\phi$
for each chart $\phi$ as before and $p\in U_\phi$. As a consequence,
each of the the $C^\infty$-diffeomorphisms
$(\pi,d(T\phi))\colon \pi^{-1}(U_\phi)\to U_\phi\times
E_\phi\times E_\phi$ is a local trivialization and
$\pi_{T^2M}^{-1}(0_M)$ is a smooth vector bundle.
\begin{la}\label{infra}
In the preceding situation, the following holds:
\begin{itemize}
\item[\textup{(a)}]
$\Theta(v,w):=\kappa(T\lambda_p(v,w))\in T_{0_p}(TM)$ for all $p\in M$ and $v,w\in T_pM$;
\item[\textup{(b)}]
The map $\Theta\colon TM\oplus TM\to \pi_{T^2M}^{-1}(0_M)$
is an isomorphism of $C^\infty$-vector bundles over~$\id_M$;
\item[\textup{(c)}]
If $U\subseteq TM$ is an open subset and $U\times_M TM:=
\bigcup_{p\in M} (U\cap T_pM)\times T_pM\subseteq TM\oplus TM$,
then $\Theta(U\times_M TM)=\kappa(TU)\cap\pi_{T^2M}^{-1}(0_M)$.
\end{itemize}
Now let $K$ be a compact smooth manifold $($possibly with rough boundary$)$
and $\ell\in \N_0\cup\{\infty\}$. Let $\Gamma_f:=\{\tau\in C^\ell(K,TM)\colon (\forall x\in K)\;
\tau(x)\in T_{f(x)}M\}$,
\[
\Gamma_{0\circ f}:=\{\tau\in C^\ell(K,T^2M)\colon (\forall x\in K)\; \tau(x)\in T_{0_{f(x)}}TM\},
\]
$O_f:=\{\tau\in\Gamma_f\colon \tau(K)\subseteq U\}$ and $O_{0\circ f}:=\{\tau\in\Gamma_{0\circ f}\colon
\tau(K)\subseteq \kappa(TU)\}$. Then the following holds:
\begin{itemize}
\item[\textup{(d)}]
$\Theta\circ(\sigma,\tau)\in \Gamma_{0\circ f}$ for all $\sigma,\tau\in\Gamma_f$
and the map
\begin{equation}\label{ourmp}
\Gamma_f\times \Gamma_f\to \Gamma_{0\circ f},\quad (\sigma,\tau)\mto\Theta\circ(\sigma,\tau)
\end{equation}
is an isomorphism of topological vector spaces.
\item[\textup{(e)}]
The isomorphism from \emph{(d)} restricts to a $C^\infty$-diffeomorphism
$\Psi_f\colon O_f\times\Gamma_f\to O_{0\circ f}$.
\end{itemize}
\end{la}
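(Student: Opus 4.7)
The plan is to handle parts (a)--(c) by direct computation, exploiting the identity $\pi_{T^2M}=(T\pi_{TM})\circ\kappa$ from~(\ref{firstkapp}) for (a) and local chart computations for (b)--(c); parts (d)--(e) will then follow essentially for free by feeding the resulting smooth vector bundle isomorphism $\Theta$ into Lemma~\ref{Gammfunct}.

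For (a), I compute
\[
\pi_{T^2M}(\Theta(v,w))=\pi_{T^2M}(\kappa(T\lambda_p(v,w)))=T\pi_{TM}(T\lambda_p(v,w))=T(\pi_{TM}\circ\lambda_p)(v,w),
\]
using (\ref{firstkapp}) together with $\kappa\circ\kappa=\id$ and the Chain Rule. Since $\pi_{TM}\circ\lambda_p\colon T_pM\to M$ is the constant map with value~$p$, its tangent map vanishes, so $\Theta(v,w)\in\pi_{T^2M}^{-1}(\{0_p\})=T_{0_p}(TM)$. For (b), I would pick a chart $\phi\colon U_\phi\to V_\phi\subseteq E_\phi$ around $p$; under $T\phi$, the inclusion $\lambda_p$ reads as $v\mapsto (\phi(p),d\phi(p)(v))$, so in $T^2\phi$-coordinates $T\lambda_p(v,w)=(\phi(p),d\phi(p)(v),0,d\phi(p)(w))$ and $\kappa$ sends this to $(\phi(p),0,d\phi(p)(v),d\phi(p)(w))$. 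Matching this against the submanifold chart~(\ref{submcha}) shows that $\Theta$ becomes, locally, the canonical identification, and in particular is a smooth vector bundle isomorphism onto $\pi_{T^2M}^{-1}(0_M)$. For (c), note that $T\lambda_p(v,w)\in T_v(TM)$ is based at $\lambda_p(v)=v\in TM$, hence $T\lambda_p(v,w)\in TU$ if and only if $v\in U$; therefore $\Theta(v,w)=\kappa(T\lambda_p(v,w))\in\kappa(TU)$ iff $v\in U$, which gives $\Theta(U\times_M TM)\subseteq \kappa(TU)\cap\pi_{T^2M}^{-1}(0_M)$. The reverse inclusion follows because every element of the right-hand side lies in $\pi_{T^2M}^{-1}(0_M)$ and is therefore of the form $\Theta(v,w)$ by (b), and the equivalence just established then forces $v\in U$.

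For (d) and (e), I would apply Lemma~\ref{Gammfunct}(a) to the smooth vector bundle isomorphism $\Theta\colon TM\oplus TM\to \pi_{T^2M}^{-1}(0_M)$ of (b) and to its inverse, obtaining continuous linear maps in both directions. Combined with the canonical identification $\Gamma_f(TM\oplus TM)\cong\Gamma_f\times\Gamma_f$ from Lemma~\ref{Gammfunct}(b) and the evident equality $\Gamma_f(\pi_{T^2M}^{-1}(0_M))=\Gamma_{0\circ f}$, this delivers the topological vector space isomorphism asserted in (d). Part (c) shows that $\Theta\circ(\sigma,\tau)$ takes values in $\kappa(TU)$ precisely when $\sigma(K)\subseteq U$, so the isomorphism of (d) restricts to a bijection $O_f\times\Gamma_f\to O_{0\circ f}$ of open subsets; since a linear homeomorphism of locally convex spaces is a $C^\infty$-diffeomorphism, so is its restriction to open subsets, yielding~(e). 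The main obstacle is part (b): verifying not only that $\Theta$ lands in the submanifold $\pi_{T^2M}^{-1}(0_M)$ but also that it is a vector bundle isomorphism onto it (so in particular that its inverse is smooth into $TM\oplus TM$) requires the explicit chart matching against~(\ref{submcha}).
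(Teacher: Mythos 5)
Your proposal is correct and follows essentially the same route as the paper: a local-chart computation showing $\Theta$ reads as $(x,y,z)\mapsto(x,0,y,z)$ for part (b), and a reduction of (d) and (e) to Lemma~\ref{Gammfunct} together with the pointwise criterion from (c). Your coordinate-free shortcuts for (a) (via $\pi_{T^2M}=(T\pi_{TM})\circ\kappa$ and the constancy of $\pi_{TM}\circ\lambda_p$) and for (c) (via the observation that $T\lambda_p(v,w)$ is based at $v$) are valid and slightly cleaner than the paper's chart manipulations, but do not change the substance of the argument.
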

\begin{proof}
(a) and (b):
Let $\phi\colon U_\phi\to V_\phi\subseteq E_\phi$ be a chart for~$M$
and $x\in V_\phi$. Abbreviate $p:=\phi^{-1}(x)$.
The map $\alpha \colon E_\phi\to T_pM$,
$y\mto (T\phi^{-1})(x,y)$ is an isomorphism of topological vector spaces.
Since
\[
(T\phi\circ \lambda_p\circ \alpha)(y)=(x,y)
\]
for all $y\in E_\phi$, we have
$(T^2\phi\circ T\lambda_p)(T\phi^{-1}(x,y),T\phi^{-1}(x,z))
=(T^2\phi\circ T\lambda_p\circ T\alpha)(y,z)=(x,y,0,z)$ for all $y,z\in E_\phi$
and hence
\begin{equation}\label{maybenee}
(T^2\phi\circ \kappa\circ T\lambda_p)(T\phi^{-1}(x,y),T\phi^{-1}(x,z)))=(x,0,y,z).
\end{equation}
Writing $v:=T\phi^{-1}(x,y)$ and $w:=T\phi^{-1}(x,z)$, we deduce
that $\kappa(T\lambda_p(v,w))=T^2\phi^{-1}(x,0,y,z)\in T_{0_p}(TM)$, establishing~(a).
It follows from~(\ref{maybenee})
that the map $T_pM\times T_pM\to T_{0_p}(TM)$, $(v,w)\mto \kappa(T\lambda_p(v,w))$
is a bijection. As a consequence, $\Theta$ is a bijection.
Given a chart $\phi\colon U_\phi\to V_\phi\subseteq E_\phi$ of~$M$, the map
\[
T\phi\oplus T\phi\colon (TM\oplus TM)|_{TU_\phi}\to V_\phi\times E_\phi\times E_\phi,\;\;
(v,w)\mto (T\phi(v),d\phi(w))
\]
is a chart for $TM\oplus TM$.
Using (\ref{maybenee}), we find that
\[
(T^2\phi\circ \Theta\circ (T\phi \oplus T\phi)^{-1})(x,y,z)=(T^2\phi\circ\Theta)(T\phi^{-1}(x,y),
T\phi^{-1}(x,z))=(x,0,y,z)
\]
for $(x,y,z)\in V_\phi\times E_\phi\times E_\phi$, which is a $C^\infty$-diffeomorphism
from $V_\phi\times E_\phi\times E_\phi$ onto $V_\phi \times\{0\}\times E_\phi\times E_\phi=
T^2 V_\phi\cap (E_\phi\times \{0\}\times E_\phi\times E_\phi)$.
As $T^2\phi$ restricts to a chart of the submanifold $\pi_{T^2M}^{-1}(0_M)$
(cf.\ (\ref{submcha})), we deduce that $\Theta$ restricts to a $C^\infty$-diffeomorphism
$(TM\oplus TM)|_{TU_\phi}\to (T^2 U_\phi)\cap \pi_{T^2M}^{-1}(0_M)$.
Now $\Theta((TM\oplus TM)_p)\subseteq \pi^{-1}(\{p\})$ for each $p\in M$, by (a).
Since $(d(T\phi)\circ \Theta\circ (T\phi \oplus T\phi)^{-1})(x,y,z)=(y,z)$
is linear in $(y,z)$, we deduce that the $C^\infty$-diffeomorphism $\Theta$ is an
isomorphism of smooth vector bundles over~$\id_M$.

(c)
Let $p\in M$ and $v,w\in T_pM$. Pick a chart $\phi\colon U_\phi\to V_\phi\subseteq E_\phi$
with $p\in U_\phi$ and set $x:=\phi(p)$.
Then $v=T\phi^{-1}(x,y)$ and $w=T\phi^{-1}(x,z)$ with suitable $y,z\in E_\phi$
and
\begin{align*}
&\Theta(v,w)\in \kappa(TU)\cap T_{0_p}(TM) \\
 \Leftrightarrow &
\Theta(v,w)\in \kappa(TU)
\Leftrightarrow  T\lambda_p(v,w)\in TU\cap T^2 U_\phi=T(U\cap TU_\phi)\\
\Leftrightarrow& (x,y,0,z)=T^2\phi T\lambda_p(v,w)\in T^2\phi(TU\cap T^2 U_\phi)
\Leftrightarrow(x,y)\in T\phi(U\cap TU_\phi)\;\Leftrightarrow \; v\in U.
\end{align*}
Thus $\Theta^{-1}(\kappa(TU)\cap \pi_{T^2M}^{-1}(0_M))\cap (T_pM\times
T_pM)=(T_pM\cap U)\times T_pM$ and the assertion follows.

(d)
We have $\Gamma_{0\circ f}:=\Gamma_{0\circ f}(T^2M)
=\Gamma_f(\pi_{T^2M}^{-1}(0_M))$
as a set and as a topological space, as a consequence of \ref{into-sub}
and Lemma~\ref{embpfwd}.
Since $\pi^{-1}(\{p\})=T_{0_p}(TM)$ as a vector space for each $p\in M$,
a pointwise calculation shows that $\Gamma_{0\circ f}(T^2M)=\Gamma_f(\pi_{T^2M}^{-1}(0_M))$
also as vector spaces, and hence as locally convex spaces.
Identifying $\Gamma_f\times\Gamma_f$ with $\Gamma_f(TM\oplus TM)$
as in Lemma~\ref{Gammfunct}\,(b), we have
\[
\Theta\circ (\sigma,\tau)=\Gamma_f(\Theta)(\sigma,\tau)\in \Gamma_f(\pi_{T^2M}^{-1}(0_M))
\]
for all $(\sigma,\tau)\in \Gamma_f\times\Gamma_f$, by Lemma~\ref{Gammfunct}\,(a).
The map (\ref{ourmp}) to $\Gamma_f(\pi_{T^2M}^{-1}(0_M))=\Gamma_{0\circ f}(T^2M)$
therefore coincides with $\Gamma_f(\Theta)$,
which is an isomorphism of locally convex spaces (with inverse $\Gamma_f(\Theta^{-1})$)
by Lemma~\ref{Gammfunct}\,(a).

(e) Given $\sigma,\tau \in \Gamma_f$, we have $\Theta\circ (\sigma,\tau)\in O_{0\circ f}$
if and only if $\Theta(\sigma(x),\tau(x))\in
\kappa(TU)$
for all $x\in K$. By (c), this holds if and only if $\sigma(x)\in U$ for all $x\in K$,
i.e., if and only if $\sigma\in O_f$. Thus $\{\Theta\circ (\sigma,\tau)\colon
(\sigma,\tau)\in O_f\times \Gamma_f\}=O_{0\circ f}$.
\end{proof}

\noindent
\begin{proof}[Proof of Theorem~\ref{thetabu}]
Given $f\in C^\ell(K,M)$, the map $\phi_f\colon O_f\to O_f'\subseteq C^\ell(K,M)$
is a $C^\infty$-diffeomorphism with $\phi_f(0)=f$, whence
$
T\phi_f(0,\cdot)\colon\Gamma_f\to T_f(C^\ell(K,M))
$
is an isomorphism of topological vector spaces. For $\tau\in\Gamma_f$,
we have for each $x\in K$
\begin{eqnarray*}
T\ve_xT\phi_f(0,\tau)&=&T\ve_x([t\mto \Sigma\circ (t\tau)])
=[t\mto \Sigma(t\tau(x))]\\
&=& [t\mto\Sigma|_{T_{f(x)}M}(t\tau(x))]
=T\Sigma|_{T_{f(x)}M}(\tau(x))=\tau(x),
\end{eqnarray*}
as $\Sigma$ is assumed normalized. Thus $\Phi(T\phi_f(0,\tau))=\tau\in\Gamma_f\subseteq
C^\ell(K,TM)$, whence $\Phi(v)\in \Gamma_f\subseteq C^\ell(K,TM)$
for each $v\in T_f(C^\ell(K,M))$ and $\Phi$ takes $T_f(C^\ell(K,M))$ bijectively
and linearly onto~$\Gamma_f$. As $T(C^\ell(K,M))$ and $C^\ell(K,TM)$
is the disjoint union of the sets $T_f(C^\ell(K,M))$ and $\Gamma_f=C^\ell(K,\pi_{TM})^{-1}(\{f\})$,
respectively, we see that $\Phi$ is a bijection. If we can show that $\Phi$ is
a $C^\infty$-diffeomorphism, it will also follow from the preceding that $C^\ell(K,\pi_{TM})\colon
C^\ell(K,TM)\to C^\ell(K,M)$ is a smooth vector bundle over $C^\ell(K,M)$ (like $T(C^\ell(K,M))$)
and $\Phi$ an isomorphism of
smooth vector bundles over~$\id_M$.\\[2.3mm]
The bijective map $\Phi$ will be a $C^\infty$-diffeomorphism
if we can show that\footnote{The sets $S_f:=T\phi_f(O_f\times \Gamma_f)$
form an open cover of $T(C^\ell(K,M))$ for $f\in C^\ell(K,M)$,
whence the sets $\Phi(S_f)$
form a cover of $C^\ell(K,TM)$ by sets which are open as $\Phi(S_f)
=(\phi_{0\circ f}\circ \Psi_f)(O_f\times \Gamma_f)=\phi_{0\circ f}(O_{0\circ f})$.}
\[
\Phi\circ T\phi_f=\phi_{0\circ f}\circ \Psi_f
\]
for each $f\in C^\ell(K,M)$, where
$\Psi_f\colon O_f\times\Gamma_f \to O_{0\circ f}$ is the $C^\infty$-diffeomorphism
from Lemma~\ref{infra}\,(e).
Recall that $\lambda_p\colon T_pM\to TM$, $z\mto z$
is the inclusion for $p\in M$. Now
\[
T\phi_f(\sigma,\tau)=[t\mto\Sigma\circ (\sigma+t\tau)]
\]
for all $(\sigma,\tau)\in O_f\times\Gamma_f$, and thus
\begin{align*}
\Phi(T\phi_f(\sigma,\tau))
=& ([t\mto \Sigma(\sigma(x)+t\tau(x))])_{x\in K}
= ([t\mto (\Sigma\circ \lambda_{f(x)})(\sigma(x)+t\tau(x))])_{x\in K}\\
=& (T(\Sigma\circ \lambda_{f(x)})(\sigma(x),\tau(x)))_{x\in K}
= (\Sigma_{TM}((\kappa \circ T\lambda_{f(x)})(\sigma(x),\tau(x))))_{x\in K}\\
=& ((\Sigma_{TM}\circ \Psi_f)(\sigma,\tau)(x))_{x\in K}
=(\phi_{0\circ f}\circ \Psi_f)(\sigma,\tau). \tag*{\qedhere}
\end{align*}
\end{proof}
\section{Current groupoids related to orbifold morphisms}\label{app:orbi}

In this section, the relation of current groupoids of proper \'{e}tale Lie groupoids with orbifolds and morphisms of orbifolds is discussed. 
An orbifold is a generalisation of a manifold allowing mild singularities; we recall from \cite{Sch,MaP,MaM}:

\begin{numba}[Orbifolds in local charts]
Let $Q$ be a Hausdorff topological space. An orbifold chart $(V,G,\pi)$ is a triple, where $V$ is a connected manifold, $G \subseteq \Diff (V)$ a finite subgroup and $\pi \colon V \rightarrow Q$ a continuous map with open image, which induces a homeomorphism $V/G \cong \pi (V)$. Two orbifold charts $(V,G,\pi)$, $(W,H,\psi)$ on $Q$ are \emph{compatible} if for every $\pi (x) = \psi (y)$ there exists a smooth diffeomorphism $\varphi \colon V_x \rightarrow V_y$, \emph{a change of charts} between $x$- and $y$-neighborhoods, such that $\psi \circ \varphi = \pi|_{V_x}$.\footnote{Contrary to manifolds, the change of charts is not given by $\psi^{-1} \circ \pi$ as $\psi$ might not be invertible.} An \emph{orbifold atlas} is a family of pairwise compatible orbifold charts whose images cover $Q$.
\end{numba}

One usually assumes that the manifolds appearing in orbifold charts are paracompact and finite dimensional, i.e.\ the orbifold atlas is finite dimensional. We do not assume this per se. However, recall that every
(finite-dimensional) orbifold atlas gives rise to an atlas groupoid which is a proper \'{e}tale Lie groupoid. 

\begin{numba}[Atlas groupoids]
Consider an orbifold atlas $\mathcal{V} \coloneq \{V_i,G_i,\varphi_i)\}_{i\in I}$ on a topological space $Q$ such that the manifolds $V_i, i\in I$ are finite dimensional. Then we construct a proper \'{e}tale Lie groupoid $\Gamma (\mathcal{V})$, called \emph{atlas groupoid}, as follows. Its space of arrows is given by the disjoint union $\sqcup_{i\in I} V_i$, while the arrows are germs of change of chart morphisms (with the germ topology turning $\Gamma (\mathcal{V})$ into a proper \'{e}tale groupoid). For details we refer to \cite[Theorem 4 (4) $\Rightarrow$ (1)]{MaP} and \cite{Poh}.
\end{numba}

Different (but equivalent) orbifold atlases give rise to different (but Morita equivalent) atlas groupoids. This construction can be reversed, as \cite{MaP,MaM} showed that the orbit space associated to the canonical right action of a (finite-dimensional) proper \'{e}tale Lie groupoid on its space of units gives rise to a topological space with an orbifold atlas. Again, Morita equivalent groupoids give rise to equivalent orbifold atlases. Hence at least in the finite-dimensional case, orbifolds correspond to proper \'{e}tale Lie groupoids. Currently, there seems to be no consensus on the definition of an infinite-dimensional orbifold, however, the Lie groupoid picture generalises with ease.

\begin{defn}
We call a proper \'{e}tale Lie groupoid $\mathcal{G}$ an \emph{orbifold groupoid}.
\end{defn}

It is currently unknown whether an orbifold groupoid modelled on an infinite-dimensional space
corresponds to an orbifold in (infinite-dimensional) local charts. The classical proof (see e.g.\ \cite[Theorem 4]{MaP}) requires a suitable version of a slice theorem for infinite-dimensional Lie group actions. No such theorem is known in general. 
As a special case, Theorem C entails that current groupoids of orbifold groupoids are again orbifold groupoids which are locally isomorphic to action groupoids by Proposition \ref{prop:loc:actgpd}. We conjecture that at least these orbifold groupoids correspond to orbifolds in local charts.\footnote{Since for Banach manifolds and tame \Frechet\, manifolds suitable slice theorems are known. Chen proves a similar statement for orbispaces, see \ref{Numba:develop} below.} However, this is beyond the present paper.

It is known that spaces of orbifold maps are infinite-dimensional orbifolds \cite{Chen,CaPaRaS,Wei,RaV}. Now, since a compact manifold $K$ is a trivial orbifold, does the current groupoid model the space of $C^\ell$-orbifold morphisms $C^\ell_{\text{Orb}} (K,Q)$? In general, this is not even the case if $\mathcal{G}$ represents a manifold.

\begin{exa}\label{ex: toofew}
Let $Q = \Sph = K$ and choose a manifold atlas $\mathcal{V}$ of $Q$ to construct $\Gamma (\mathcal{V})$. Its space of units $\Gamma (\mathcal{V})_0$ is the disjoint union of at least two smooth manifolds (as every atlas of the unit sphere must contain at least two charts). 
Since $K$ is connected, the image of every smooth map $K \rightarrow \Gamma (\mathcal{V})_0$ is contained in exactly one component of $\Gamma (\mathcal{V})_0$, i.e.\ in one chart domain. In particular, the identity $\id \colon \Sph \rightarrow \Sph$ is not contained in the current groupoid, but $C^\ell (\Sph , \Sph) = C^\ell_{\text{Orb}} (K,M)$ for all $\ell \in \N_0 \cup \{\infty\}$.
\end{exa}
 Note the contrast to Example \ref{exa:unitgpd} where we recovered the full space $C^\ell (\Sph,\Sph)$. However, in a specialised case we can avoid atlas groupoids to obtain current groupoids which encode orbifold morphisms.
 
\begin{numba}[Curves into developable orbifolds]\label{Numba:develop}
 Recall that a (smooth) orbifold $(Q,\mathcal{U})$ is \emph{developable} if there is a discrete subgroup $\Gamma \subseteq \Diff (M)$ such that $\Gamma \times M \rightarrow M$ is a proper action and as orbifolds $Q \cong M/\Gamma$ (see \cite[Section III.$\mathcal{G}$ 1.3]{BaH}). To every developable (smooth) orbifold one can associate a proper \'{e}tale (Lie) groupoid, by defining the action groupoid $\Gamma \ltimes M \toto M$ where $\Gamma$ is endowed with the discrete topology (i.e.\ is a $0$-dimensional manifold).\smallskip
 
Let now $Q = M /\Gamma$ be developable. Then every $C^\ell$-orbifold path $I \rightarrow (Q,\mathcal{U})$ from a compact interval $I\subseteq \mathbb{R}$ induces a $C^\ell$-map $I \rightarrow M$.\footnote{In local charts, a $C^\ell$-path is a continuous map $c\colon I \rightarrow Q$ which lifts locally to $C^\ell$-paths $\hat{c}_i \colon ]t_i,t_{i+1}[ \rightarrow V_i$ in orbifold charts such that the lifts $c_i,c_j$ are (locally) related via $\lambda \circ c_i = c_j$ by suitable change of orbifold charts. See \cite[Section 4.1 and Appendix E]{Sch} for details.}
 For $\ell =0$ this is recorded in \cite[III.$\mathcal{G}$ Example 3.9 (1)]{BaH}. For $\ell >0$ every $C^\ell$-orbifold path to $Q$ admits lifts in orbifold charts which embed as open sets of $M$ (due to developability of $Q$). Then \cite[Lemma F.1]{Sch} generalises to $\ell \in \N$ and yields an open cover of $I$ by intervals together with $C^\ell$-lifts such that every $x\in I$ is contained at most in two subintervals. Now gluing the lifts  together using changes of charts (which are induced by the $\Gamma$ action!) yields the desired $C^\ell$-map $I\rightarrow M$. 
 Of course, in general, many different $C^\ell$-curves lift the same $C^\ell$-orbifold path. One now identifies the $C^\ell$-orbifold paths with the orbit space $\text{Orb}^\ell (I,Q) \coloneq C^\ell (I,M)/C^\ell (I,\Gamma\ltimes M)$:
 \begin{enumerate}
 \item if $\ell=0$, the orbit space $\text{Orb}^\ell (I,Q)$ coincides with the space of continuous orbifold paths as is explained in \cite[III.$\mathcal{G}$ Example 3.9 (1)]{BaH}. Thus (up to homotopy of paths) the current groupoid encodes the so-called $\mathcal{G}$-paths and the fundamental group of a developable \'{e}tale groupoid (cf.\ \cite[Section III.$\mathcal{G}$ 3.]{BaH}). 
 \item For $\infty >\ell \geq 0$ \cite[Theorem 3.3.3 (ii)]{Chen} shows that $\text{Orb}^\ell (I,Q)$ coincides with the $C^\ell$-orbifold paths. Thus we recover Chen's orbispace structure on $\text{Orb}^\ell (I,Q)$. In this case, the Lie groupoid structure of $C^\ell (K,\Gamma \ltimes M) \toto C^\ell (K,M)$ is new as in loc.cit.\ only the orbispace structure of the quotient and a topological groupoid structure are discussed. However, the setting of \cite{Chen} is much more general as it allows spaces of orbifold maps \emph{between} orbifolds
to be treated.
 \end{enumerate}
 \end{numba} 

\addcontentsline{toc}{section}{References}
\bibliography{groupoid}

\end{document}